\newcommand{\om}{\omega}
\newcommand{\vp}{\varphi}
\newcommand{\sse}{\subseteq}
\newcommand{\contains}{\supseteq}
\DeclareMathOperator{\depth}{depth}
\DeclareMathOperator{\E}{E}
\DeclareMathOperator{\R}{R}
\DeclareMathOperator{\Ext}{Ext}
\DeclareMathOperator{\RP}{RP}
\DeclareMathOperator{\HOD}{HOD}
\newcommand{\rgl}{\rangle}
\newcommand{\lgl}{\langle}
\newcommand{\re}{\restriction}
\newcommand{\bT}{\mathbb{T}}
\newcommand{\bP}{\mathbb{P}}
\newcommand{\bR}{\mathbb{R}}
\newcommand{\bN}{\mathbb{N}}
\newcommand{\ra}{\rightarrow}
\newcommand{\Erdos}{Erd\H{o}s}
\newcommand{\Juhasz}{Juh{\'{a}}sz}
\newcommand{\Rodl}{R{\"{o}}dl}
\begin{document}

\newtheorem{thm}{Theorem}[section]
\newtheorem{prop}[thm]{Proposition}
\newtheorem{lem}[thm]{Lemma}
\newtheorem{cor}[thm]{Corollary}
\newtheorem{fact}[thm]{Fact}
\newtheorem{claim}[thm]{Claim}
\newtheorem*{thmMT}{Main Theorem}
\newtheorem*{thmnonumber}{Theorem}
\newtheorem*{mainclaim}{Main Claim}
\newtheorem*{claim1}{Claim 1}
\newtheorem*{claim2}{Claim 2}
\newtheorem*{claim3}{Claim 3}
\newtheorem*{claim4}{Claim 4}
\newtheorem*{claim5}{Claim 5}
\newtheorem*{claim6}{Claim 6}
\newtheorem*{claim7}{Claim 7}
\newtheorem*{claim8}{Claim 8}
\newtheorem*{claim9}{Claim 9}
\newtheorem*{claim10}{Claim 10}
\newtheorem*{claim11}{Claim 11}
\newtheorem*{claim12}{Claim 12}
\newtheorem*{claim13}{Claim 13}
\newtheorem*{claim14}{Claim 14}
\newtheorem*{claim15}{Claim 15}
\newtheorem*{claimA}{Claim A}
\newtheorem*{claimC}{Claim C}
\newtheorem*{claimD}{Claim D}
\newtheorem*{factD}{Fact D}
\newtheorem{claimn}{Claim}

\theoremstyle{definition}
\newtheorem{defn}[thm]{Definition}
\newtheorem{example}[thm]{Example}
\newtheorem{conj}[thm]{Conjecture}
\newtheorem{prob}[thm]{Problem}
\newtheorem{examples}[thm]{Examples}
\newtheorem{question}[thm]{Question}
\newtheorem{problem}[thm]{Problem}
\newtheorem{openproblems}[thm]{Open Problems}
\newtheorem{openproblem}[thm]{Open Problem}
\newtheorem{conjecture}[thm]{Conjecture}
\newtheorem*{problem1}{Problem 1}
\newtheorem*{problem2}{Problem 2}
\newtheorem*{notn}{Notation}

\theoremstyle{remark}
\newtheorem{rem}[thm]{Remark}
\newtheorem{rems}[thm]{Remarks}
\newtheorem*{ack}{Acknowledgments}
\newtheorem{note}{Note}
\newtheorem{subclaim}{Subclaim}
\newtheorem*{subclaimn}{Subclaim}
\newtheorem*{subclaim1}{Subclaim (i)}
\newtheorem*{subclaim2}{Subclaim (ii)}
\newtheorem*{subclaim3}{Subclaim (iii)}
\newtheorem*{subclaim4}{Subclaim (iv)}
\newtheorem*{case1}{Case 1}
\newtheorem*{case2}{Case 2}
\newtheorem*{case3}{Case 3}
\newtheorem*{case4}{Case 4}
\newtheorem*{case5}{Case 5}
\newtheorem*{case6}{Case 6}
\newtheorem*{case7}{Case 7}
\newtheorem*{case8}{Case 8}
\newtheorem*{case9}{Case 9}
\newtheorem*{case10}{Case 10}
\newtheorem*{case11}{Case 11}
\newtheorem*{case12}{Case 12}
\newtheorem*{case13}{Case 13}
\newtheorem*{case14}{Case 14}
\newtheorem*{case15}{Case 15}
\newtheorem*{subcasei}{Subcase (i)}
\newtheorem*{subcaseii}{Subcase (ii)}
\newtheorem*{subcaseiii}{Subcase (iii)}
\newtheorem*{subcaseiv}{Subcase (iv)}
\newtheorem*{subcasev}{Subcase (v)}
\newtheorem*{subcasevi}{Subcase (vi)}
\newtheorem*{subsubcasea}{Sub-subcase (a)}
\newtheorem*{subsubcaseb}{Sub-subcase (b)}

\numberwithin{equation}{section}

\title[A Ramsey-Classification Theorem]{A Ramsey-Classification Theorem and its Application in the Tukey Theory of
Ultrafilters}

\author{Natasha Dobrinen}
\address{University of Denver\\
Department of Mathematics, 2360 S Gaylord St, Denver, CO 80208, USA}
\email{natasha.dobrinen@du.edu}
\thanks{The first author was supported by an Association for Women in Mathematics - National Science Foundation Mentoring Travel Grant
 and a University of Denver Faculty Research Fund Grant}

\author{Stevo Todorcevic}
\address{Department of Mathematics\\
University of Toronto\\
Toronto\\ Canada\\ M5S 2E4}
\email{stevo@math.toronto.edu}
\address{Institut de Mathematiques de Jussieu\\
CNRS - UMR 7056\\
75205 Paris\\
France}
\email{stevo@math.jussieu.fr}
\thanks{The second author was supported by grants from NSERC and CNRS}

\subjclass[2010]{Primary 05D10, 03E02, 06A06, 54D80; Secondary 03E04, 03E05}

\date{}

\begin{abstract}
Motivated by a Tukey classification problem we develop here a new
topological Ramsey space $\mathcal{R}_1$ that in its complexity
comes immediately after the classical  Ellentuck
space \cite{MR0349393}. Associated with $\mathcal{R}_1$ is an
 ultrafilter $\mathcal{U}_1$  which is  weakly Ramsey but not Ramsey.
We prove a canonization theorem for equivalence relations on
fronts on $\mathcal{R}_1$. This is analogous to the  Pudlak-\Rodl\
Theorem canonizing  equivalence relations on barriers on the
Ellentuck space. We then apply our canonization theorem to
completely classify all Rudin-Keisler equivalence classes of
ultrafilters which are Tukey reducible to $\mathcal{U}_1$: Every
ultrafilter which is Tukey reducible to $\mathcal{U}_1$ is
isomorphic to a countable iteration of Fubini products of
ultrafilters from among a fixed countable collection of
ultrafilters. Moreover, we show that there is exactly one Tukey
type of nonprincipal ultrafilters strictly below that of
$\mathcal{U}_1$, namely the Tukey type of a Ramsey ultrafilter.
\end{abstract}

\maketitle

\section{Overview}\label{sec.overview}

Motivated by a Tukey classification problem and inspired by work of Laflamme in \cite{Laflamme89} and the second author in \cite{Raghavan/Todorcevic11},
we build a new topological Ramsey space $\mathcal{R}_1$.
This space, $\mathcal{R}_1$,  is minimal in complexity above the classical Ellentuck space,
the Ellentuck space being obtained as the projection of $\mathcal{R}_1$ via
  a fixed finite-to-one map.
Every topological Ramsey space has  notions of finite approximations, fronts, and barriers.
In Theorem
  \ref{thm.original}, we prove that
for each $n$,  there is a finite collection of canonical equivalence relations for uniform barriers on $\mathcal{R}_1$ of rank $n$.
That is,  we  show that given  $n$, for any uniform barrier $\mathcal{B}$ on $\mathcal{R}_1$ of finite rank  and 
any equivalence relation $\E$ on $\mathcal{B}$,
there is an $X\in\mathcal{R}_1$ such that $\E$ restricted to the members of  $\mathcal{B}$ coming from within $X$ is exactly one of the canonical equivalence relations.
The canonical equivalence relations are represented
by a certain collection of finite trees.
  This  generalizes the \Erdos-Rado Theorem  for barriers of the form $[\bN]^n$.
In the  main theorem of this paper,
Theorem \ref{thm.PRR(1)},
we prove a new Ramsey-classification theorem for all barriers on the topological Ramsey space $\mathcal{R}_1$: 
We prove that for any barrier $\mathcal{B}$ on $\mathcal{R}_1$ and any equivalence relation  on $\mathcal{B}$,
there is an  inner Sperner map which canonizes the equivalence relation.
This generalizes the Pudlak-\Rodl\ Theorem for  barriers on the Ellentuck space.
These classification theorems were motivated by the following.

Recently the second author (see Theorem 24 in
\cite{Raghavan/Todorcevic11}) has made a connection between the
Ramsey-classification theory (also known as the canonical Ramsey
theory) and the Tukey classification theory of ultrafilters  on
$\omega$.
 More precisely, he showed that selective ultrafilters
realize  minimal Tukey types in the class of all ultrafilters on
$\omega$ by applying the Pudlak-R\"{o}dl Ramsey classification
result to a given cofinal map from a selective ultrafilter into
any other ultrafilter on $\omega,$ a map which, on the basis of
our previous paper \cite{Dobrinen/Todorcevic10}, he could assume
to be continuous. Recall that the notion of a selective
ultrafilter is closely tied to the Ellentuck space on the family of
all infinite subsets of $\omega,$ or rather the one-dimensional
version of the pigeon-hole principle on which the Ellentuck space is
based, the principle stating that an arbitrary
$f:\omega\rightarrow\omega$ is either constant or is one-to-one on
an infinite subset of $\omega.$ Thus an ultrafilter $\mathcal{U}$
on $\omega$ is \emph{selective} if for every map
$f:\omega\rightarrow\omega$ there is an $X\in \mathcal{U}$ such that
$f$ is either constant or one-to-one on $\mathcal{U}.$ Since
essentially any other topological Ramsey space has it own notion
of a selective ultrafilter living on the set of its
$1$-approximations (see \cite{MR2330595}), the argument for
Theorem 24 in \cite{Raghavan/Todorcevic11} is so general that it
will give analogous Tukey-classification results for all
ultrafilters of this sort provided, of course, that we have the
analogues of the Pudlak-R\"{o}dl Ramsey-classification result for
the corresponding topological Ramsey spaces.

This paper is our
first step towards a research in this direction. 
In particular,
inspired by work of Laflamme \cite{Laflamme89}, we build a
topological Ramsey space $\mathcal{R}_1$, so that the ultrafilter associated with $\mathcal{R}_1$ is isomorphic to the  ultrafilter $\mathcal{U}_1$
 forced by Laflamme. 
In \cite{Laflamme89},
Laflamme forced an ultrafilter, $\mathcal{U}_1$, which is  weakly
Ramsey but not Ramsey, and satisfies additional partition
properties. Moreover, he showed that $\mathcal{U}_1$ has complete
combinatorics over the Solovay model. By work of Blass in
\cite{Blass74}, $\mathcal{U}_1$ has only one non-trivial
Rudin-Keisler equivalence class of ultrafilters strictly below it,
namely that of the projection of $\mathcal{U}_1$ to a Ramsey
ultrafilter denoted $\mathcal{U}_0$. Thus, the Rudin-Keisler
classes of nonprincipal ultrafilters which are Rudin-Keisler
reducible to $\mathcal{U}_1$ forms a chain of length 2.  
At this
point it is instructive to recall another result of the second
author (see Theorem 4.4 in \cite{MR1644345}) stating that assuming
sufficiently strong large cardinal axioms \emph{every} selective
ultrafilter is generic over $L(\mathbb{R})$ for the partial order
of infinite subsets of $\omega,$ and the same argument applies for
any other ultrafilter that is selective relative any other
topological Ramsey space (see \cite{MR2330595}). Since, as it is
well-known, assuming large cardinals, the theory of
$L(\mathbb{R})$ cannot be changed by forcing, this gives another
perspective to the notion of `complete combinatorics' of Blass
and Laflamme.

One line of motivation for the work in this paper was to find  the structure of the Tukey types of nonprincipal ultrafilters Tukey reducible to $\mathcal{U}_1$.
We show in Theorem \ref{cor.1Tpred}
that, in fact, the  only  Tukey type of nonprincipal  ultrafilters  strictly below that of $\mathcal{U}_1$ is the Tukey type of $\mathcal{U}_0$.
Thus, the structure of the Tukey types below $\mathcal{U}_1$ is the same as the structure of the Rudin-Keisler equivalence classes below $\mathcal{U}_1$.
The second and stronger motivation for this work  was to find  a canonization theorem for equivalence relations on  fronts on  $\mathcal{R}_1$, and to apply it to obtain a
 finer result than  Theorem \ref{cor.1Tpred}.
 Applying Theorem \ref{thm.PRR(1)},
we  completely classify all Rudin-Keisler classes of ultrafilters which are contained in the Tukey types of $\mathcal{U}_1$ and $\mathcal{U}_0$ in Theorem \ref{thm.TukeyU_1}.
This extends the second author's
Theorem 24 in
\cite{Raghavan/Todorcevic11}, classifying the Rudin-Keisler classes within the  Tukey type of a Ramsey ultrafilter.
We remark that the fact that $\mathcal{R}_1$ is a topological Ramsey space is essential to the proof of Theorem \ref{thm.TukeyU_1}, and that forcing alone is not sufficient to obtain our result.


\section{Introduction and Background}\label{sec.intro}

We now introduce this work including the necessary background and notions.
Let $\mathcal{U}$ be an ultrafilter on a countable base set.
A subset $\mathcal{B}$ of an ultrafilter $\mathcal{U}$ is called {\em cofinal} if it is a base for the ultrafilter $\mathcal{U}$; that is, if for each $U\in\mathcal{U}$ there is an $X\in\mathcal{B}$ such that $X\sse U$.
Given ultrafilters $\mathcal{U},\mathcal{V}$,
we say that a function $g:\mathcal{U}\ra \mathcal{V}$
 is {\em cofinal} if the image of each cofinal subset of $\mathcal{U}$ is cofinal in $\mathcal{V}$.
We say that $\mathcal{V}$ is {\em Tukey reducible} to $\mathcal{U}$, and write $\mathcal{V}\le_T \mathcal{U}$, if there is a cofinal map from $\mathcal{U}$ into $\mathcal{V}$.
If both $\mathcal{V}\le_T \mathcal{U}$ and $\mathcal{U}\le_T \mathcal{V}$, then we write $\mathcal{U}\equiv_T \mathcal{V}$ and say that $\mathcal{U}$ and $\mathcal{V}$ are Tukey equivalent.
$\equiv_T$ is an equivalence relation, and  $\le_T$ on the equivalence classes forms a partial ordering.
The equivalence classes are called {\em Tukey types}.

A cofinal map $g:\mathcal{U}\ra\mathcal{V}$ is called {\em monotone} if whenever $U\contains U'$ are elements of $\mathcal{U}$, we have $g(U)\contains g(U')$.
It is a fact that $\mathcal{U}\ge_T\mathcal{V}$ if and only if there is a monotone cofinal map witnessing this.
(See Fact 6 in \cite{Dobrinen/Todorcevic10}.)
Thus, we need only consider monotone cofinal maps.
We point out that  $\mathcal{U}\ge_T\mathcal{V}$ if and only if there are cofinal subsets $\mathcal{B}\sse\mathcal{U}$ and $\mathcal{C}\sse\mathcal{V}$ and a  map $g:\mathcal{B}\ra\mathcal{C}$ which is a cofinal map from $\mathcal{B}$ into $\mathcal{C}$.
This fact will be used throughout this section.

We remind the reader of the Rudin-Keisler reducibility relation.
Given two ultrafilters $\mathcal{U}$ and $\mathcal{V}$, we say that $\mathcal{U}\le_{RK}\mathcal{V}$ if and only if there is a function $f:\om\ra\om$ such that $\mathcal{U}=f(\mathcal{V})$, where
\begin{equation}
f(\mathcal{V})=\lgl\{f(U):U\in\mathcal{U}\}\rgl.
\end{equation}
Recall that $\mathcal{U}\equiv_{RK}\mathcal{V}$ if and only if $\mathcal{U}$ and $\mathcal{V}$ are isomorphic.

Tukey reducibility on ultrafilters generalizes  Rudin-Keisler reducibility in that $\mathcal{U}\ge_{RK} \mathcal{V}$ implies that $\mathcal{U}\ge_T\mathcal{V}$.
The converse does not hold.
There are $2^{\mathfrak{c}}$ many ultrafilters in the top Tukey type (see \Juhasz \cite{Juhasz66} and Isbell \cite{Isbell65}), whereas every Rudin-Keisler equivalence class has cardinality  $\mathfrak{c}$.

However, it is consistent that there are ultrafilters with Tukey type of cardinality $\mathfrak{c}$.
We remind the reader of the following special kinds of ultrafilters.

\begin{defn}[\cite{Bartoszynski/JudahBK}]
Let $\mathcal{U}$ be an ultrafilter on $\om$.
\begin{enumerate}
\item
$\mathcal{U}$ is {\em Ramsey} if for each coloring $c:[\om]^2\ra 2$, there is a $U\in\mathcal{U}$ such that $U$ is homogeneous, meaning $|c''[U]^2|=1$.
\item
$\mathcal{U}$ is {\em weakly Ramsey} if for each coloring $c:[\om]^2\ra 3$, there is a $U\in\mathcal{U}$ such that $|c''[U]^2|\le 2$.
\item
$\mathcal{U}$ is a {\em p-point} if for each decreasing sequence $U_0\contains U_1\contains\dots$ of elements of $\mathcal{U}$, there is an $X\in\mathcal{U}$ such that $|X\setminus U_n|<\om$, for each $n<\om$.
\item
$\mathcal{U}$ is  {\em rapid} if for each function $f:\om\ra\om$, there is an $X\in\mathcal{U}$ such that $|X\cap f(n)|\le n$ for each $n<\om$.
\end{enumerate}
\end{defn}

Every Ramsey ultrafilter is weakly Ramsey,  which is in turn both a  p-point and rapid.
All of these sorts of ultrafilters are consistent with ZFC, and exist in every model of CH or MA.
Ramsey ultrafilters are also called {\em selective}, and the property of being Ramsey is equivalent to the following property:
For each decreasing sequence $U_0\contains U_1\contains\dots$ of members of $\mathcal{U}$, there is an $X\in\mathcal{U}$ such that  for each $n<\om$, $X\sse^* U_n$ and moreover $|X\cap(U_{n+1}\setminus U_n)|\le 1$.

Any subset of $\mathcal{P}(\om)$ is a   topological space, with the subspace topology inherited from the Cantor space.
Thus, given any
$\mathcal{B},\mathcal{C}\sse\mathcal{P}(\om)$,
a function $g:\mathcal{B}\ra\mathcal{C}$ is continuous if it is continuous with respect to the subspace topologies on $\mathcal{B}$ and $\mathcal{C}$.
Equivalently,
 a function $g:\mathcal{B}\ra\mathcal{C}$ is continuous if for each sequence $(X_n)_{n<\om}\sse\mathcal{B}$ which converges to some $X\in\mathcal{B}$,
the sequence $(g(X_n))_{n<\om}$ converges to $g(X)$, meaning that  for all $k$ there is an $n_k$ such that for all $n\ge n_k$,
$g(X_n)\cap k=g(X)\cap k$.
For any ultrafilter $\mathcal{V}$, cofinal $\mathcal{C}\sse\mathcal{V}$, and  $X\in\mathcal{V}$,  we use $\mathcal{C}\re X$ to denote $\{Y\in\mathcal{C}:Y\sse X\}$.
Note that $\mathcal{C}\re X$ is a cofinal subset of $\mathcal{V}$ and hence is a filter base for $\mathcal{V}$.
Thus, $(\mathcal{U},\contains)\equiv_T(\mathcal{C}\re X,\contains)$.

The authors  proved in Theorem 20 of \cite{Dobrinen/Todorcevic10} that if $\mathcal{U}$ is a p-point and $\mathcal{U}\ge_T\mathcal{W}$,
then there is a continuous monotone cofinal map witnessing this.

\begin{thm}[Dobrinen-Todorcevic \cite{Dobrinen/Todorcevic10}]\label{thm.5}
Suppose $\mathcal{U}$ is a p-point on $\bN$ and that $\mathcal{V}$ is an arbitrary ultrafilter on $\bN$ such that $\mathcal{U}\ge_T \mathcal{V}$.
Then there is a continuous monotone map
$g:\mathcal{P}(\bN)\ra\mathcal{P}(\bN)$
 whose restriction to $\mathcal{U}$ is continuous
and has cofinal range in $\mathcal{V}$.
Hence,  $g\re\mathcal{U}$ is a continuous monotone cofinal map from $\mathcal{U}$ into $\mathcal{V}$ witnessing that $\mathcal{U}\ge_T\mathcal{V}$.
\end{thm}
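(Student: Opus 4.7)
The plan is to start from a monotone cofinal map $\bar{g}:\mathcal{U}\ra\mathcal{V}$, whose existence under the hypothesis $\mathcal{U}\ge_T\mathcal{V}$ is guaranteed by the remark made earlier in this section that every Tukey reduction is witnessed by a monotone cofinal map, and then to use the p-point property of $\mathcal{U}$ to smooth $\bar{g}$ into a continuous monotone map defined on all of $\mathcal{P}(\bN)$.

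\textbf{Level-by-level trimming.} For each $n<\omega$ I would produce $k_n<\omega$, a set $U_n\in\mathcal{U}$, and a monotone function $h_n:\mathcal{P}(k_n)\ra\mathcal{P}(n)$ such that $\bar{g}(Y)\cap n = h_n(Y\cap k_n)$ for every $Y\in\mathcal{U}$ with $Y\sse U_n\cup k_n$. To build $h_n$, fix $k_n\ge n$ and, for each of the finitely many traces $s\sse k_n$, consider the map $Z\mapsto\bar{g}(s\cup Z)\cap n$ defined on $\{Z\in\mathcal{U}:Z\sse\omega\setminus k_n\}$. Monotonicity of $\bar{g}$ makes this map monotone in $Z$, while $\mathcal{P}(n)$ is finite; therefore the map attains a minimum value $t_{n,s}\in\mathcal{P}(n)$ that is actually achieved on some $V_{n,s}\in\mathcal{U}$ with $V_{n,s}\sse\omega\setminus k_n$. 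Set $h_n(s):=t_{n,s}$ and $U_n:=\bigcap_{s\sse k_n}V_{n,s}\in\mathcal{U}$. Choosing everything inductively makes the $U_n$ decreasing and the $h_n$ consistent across levels, in the sense that $h_{n+1}(s)\cap n = h_n(s\cap k_n)$ for every $s\sse k_{n+1}$.

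\textbf{p-point fusion and definition of $g$.} With the sequence $(U_n,k_n,h_n)_{n<\omega}$ in hand, apply the p-point property of $\mathcal{U}$ to obtain $X^*\in\mathcal{U}$ with $X^*\sse^* U_n$ for every $n$. Define $g:\mathcal{P}(\bN)\ra\mathcal{P}(\bN)$ by $g(A)\cap n:=h_n(A\cap k_n)$ for all $A\sse\bN$ and $n<\omega$. Then $g$ is continuous because its first $n$ coordinates depend only on $A\cap k_n$, and it is monotone because each $h_n$ is. Restricting attention to the cofinal subfamily $\{Y\in\mathcal{U}:Y\sse X^*\}$ and using $X^*\sse^* U_n$, one checks level-by-level that $g(Y)\cap n$ agrees with $\bar{g}(Y)\cap n$ up to a finite error controlled by $Y\setminus U_n$, from which one deduces that $g(Y)\in\mathcal{V}$ and that $\{g(Y):Y\in\mathcal{U},\,Y\sse X^*\}$ is cofinal in $\mathcal{V}$.

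\textbf{Main obstacle.} The hardest part is the interaction between the level-wise trimming and the $\sse^*$-fusion: for $Y$ merely almost-contained in each $U_n$ instead of strictly contained, the equality $\bar{g}(Y)\cap n=h_n(Y\cap k_n)$ may fail by a finite amount, and one must verify that this finite discrepancy does not spoil the cofinality of $g\re\mathcal{U}$ in $\mathcal{V}$. The p-point property is essential here: it is precisely what allows the level-by-level data to be fused into a single $X^*$ stabilizing all levels simultaneously, and the cofinality argument then goes through because the finite errors can be absorbed by the cofinal nature of the witnesses in $\mathcal{V}$.
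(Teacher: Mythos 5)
The paper does not prove Theorem~\ref{thm.5}; it is quoted from \cite{Dobrinen/Todorcevic10} (Theorem~20 there), so there is no in-paper argument to compare against, and your attempt has to be judged on its own. Your overall shape of argument --- stabilize $\bar g$ level by level using monotonicity plus finiteness of $\mathcal{P}(n)$, then fuse with the p-point --- is the right one, and that part you have correct. The genuine gap is the sentence ``choosing everything inductively makes \dots\ the $h_n$ consistent across levels, in the sense that $h_{n+1}(s)\cap n=h_n(s\cap k_n)$.'' With your definition $h_n(s)=\min_{Z}\bar g(s\cup Z)\cap n$ over $Z\in\mathcal{U}$, $Z\sse\omega\setminus k_n$, one only gets $h_{n+1}(s)\cap n\supseteq h_n(s\cap k_n)$. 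Writing $s\cup Z=(s\cap k_n)\cup\bigl((s\setminus k_n)\cup Z\bigr)$, the block $s\setminus k_n$ is a \emph{fixed} finite set when computing $h_{n+1}(s)$, whereas $h_n(s\cap k_n)$ is allowed to minimize over all tails below $k_n$, including ones that drop $s\setminus k_n$; the two minima coincide only if $s\setminus k_n\sse V_{n,\,s\cap k_n}$, which nothing in your construction guarantees. (Already $\bar g(Y)=Y-1$ with $k_n=n$ gives a counterexample for every $s$ with $n\in s$.) Since you define $g$ by $g(A)\cap n:=h_n(A\cap k_n)$, without this consistency the map $g$ is not even well defined. Fixing it requires interleaving the choice of $k_n$, $U_n$, and $X^*$ so that the only traces $s$ that ever arise are $Y\cap k_n$ with $Y\sse X^*$, and so that $X^*\cap[k_n,k_{n+1})\sse U_n$; but the $U_n$ as you build them depend on $k_n$, while the needed lower bound for $k_{n+1}$ depends on $X^*$, which depends on the $U_n$. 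Breaking this circularity is the real content of the proof, and a one-shot p-point application after the $U_n$ are fixed does not do it.

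Your final paragraph misdiagnoses the obstacle and then leaves it unaddressed. You frame the difficulty as ``$\bar g(Y)\cap n=h_n(Y\cap k_n)$ may fail by a finite amount,'' but the serious issue is membership in $\mathcal V$: since $h_n$ is a minimum you have $g(Y)\sse\bar g(Y)$ where the identity holds, and a \emph{subset} of a $\mathcal V$-set need not be in $\mathcal V$. To conclude $g(Y)\in\mathcal V$ you must exhibit, for each $Y\in\mathcal U$ with $Y\sse X^*$, some $Y'\in\mathcal{U}$ with $Y'\sse Y$ and $\bar g(Y')\sse g(Y)$; producing such a $Y'$ requires diagonalizing across $n$ against the witnesses $V_{n,Y\cap k_n}$, i.e.\ a second fusion. ``The finite errors can be absorbed by the cofinal nature of the witnesses'' is not a substitute for this argument. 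So: right blueprint, but both the well-definedness of $g$ (consistency of the $h_n$) and the inclusion $g[\mathcal{U}\re X^*]\sse\mathcal V$ are genuine missing steps, not routine verifications.
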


The proof of Theorem \ref{thm.5} actually gives a type of canonization for monotone cofinal maps on p-points:
If $\mathcal{U}$ is a p-point and $f:\mathcal{U}\ra\mathcal{V}$ is a monotone cofinal map, then there is an $\tilde{X}\in\mathcal{U}$ such that the restriction of $f$ to $\mathcal{U}\re \tilde{X}$ is continuous.
For further background and results on continuous cofinal maps in relation to Tukey types of ultrafilters,  the reader is referred to \cite{Dobrinen/Todorcevic10} and \cite{Dobrinen10}.

Even though p-points have Tukey types of cardinality continuum,  in general, the Tukey type of a p-point is quite different from its Rudin-Keisler isomorphism class.
To discuss this further, the reader is reminded of the definition of the Fubini product of a collection of ultrafilters.

\begin{defn}\label{defn.Fubprod}
Let $\mathcal{U},\mathcal{V}_n$, $n<\om$, be ultrafilters.
The {\em Fubini product} of $\mathcal{U}$ and $\mathcal{V}_n$, $n<\om$,
is the ultrafilter, denoted $\lim_{n\ra\mathcal{U}}\mathcal{V}_n$,  on base set $\om\times\om$ consisting of the sets $A\sse\om\times\om$ such that
\begin{equation}
\{n\in\om:\{j\in\om:(n,j)\in A\}\in\mathcal{V}_n\}\in\mathcal{U}.
\end{equation}
That is, for $\mathcal{U}$-many $n\in\om$, the section $(A)_n$ is in $\mathcal{V}_n$.
If all $\mathcal{V}_n=\mathcal{U}$, then we let $\mathcal{U}\cdot\mathcal{U}$ denote $\lim_{n\ra\mathcal{U}}\mathcal{U}$.
\end{defn}

It is well-known that the Fubini product of two or more  p-points is not a p-point, hence for any p-point, $\mathcal{U}\cdot\mathcal{U}>_{RK}\mathcal{U}$.
In Corollary 37 of \cite{Dobrinen/Todorcevic10}, it was shown that every Ramsey ultrafilter $\mathcal{V}$ has Tukey type equal to the Tukey type of $\mathcal{V}\cdot\mathcal{V}$, and moreover that this is the case for any rapid p-point.
Further, in  Theorem 25 of \cite{Raghavan/Todorcevic11}, Raghavan and the second author showed  that, assuming CH, there are p-points $\mathcal{U}\equiv_T\mathcal{V}$ such that $\mathcal{V}<_{RK}\mathcal{U}$.
By these results, we see that, although the Tukey type of any p-point has size continuum, it contains many Rudin-Keisler inequivalent  ultrafilters within it.
One may reasonably ask what the structure of the isomorphism classes within the Tukey type of a p-point is.

For Ramsey ultrafilters, the picture has been made clear.

\begin{thm}[Todorcevic,  Theorem 24, \cite{Raghavan/Todorcevic11}]\label{thm.tod}
If $\mathcal{U}$ is a Ramsey  ultrafilter and $\mathcal{V}\le_T\mathcal{U}$,
then $\mathcal{V}$ is isomorphic to a
countable iterated Fubini product of $\mathcal{U}$.
\end{thm}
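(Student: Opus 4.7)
The plan is to combine Theorem \ref{thm.5} with the Pudlak-R\"odl canonical Ramsey theorem on the Ellentuck space. Since every Ramsey ultrafilter is a p-point, Theorem \ref{thm.5} applied to $\mathcal{U}\ge_T\mathcal{V}$ produces a monotone cofinal map $g:\mathcal{U}\to\mathcal{V}$ whose restriction to some $\tilde X\in\mathcal{U}$ is continuous; replacing $\mathcal{U}$ by $\mathcal{U}\re\tilde X$, which leaves the Tukey (and Rudin-Keisler) type of $\mathcal{U}$ unchanged, I may assume $g$ is globally continuous and monotone. By the standard correspondence between continuous monotone maps on $\mathcal{P}(\bN)$ and their finite-approximation data, $g$ is determined by a function $g_*:[\om]^{<\om}\to[\om]^{<\om}$ with $g(X)=\bcup\{g_*(s):s\sse X \text{ finite}\}$, where membership of each $n$ in $g(X)$ is decided by the shortest initial segment of $X$ on which the value of $g_*$ already contains $n$.

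Next I would extract a barrier on which to canonize. For each $n<\om$, the collection of minimal finite $s$ that decides whether $n\in g(X)$ (as $X$ ranges over $\mathcal{U}$ with $s\sse X$) forms a front on $[\om]^{<\om}$. Using that $\mathcal{U}$ is a p-point (indeed Ramsey), I diagonalize these countably many fronts into a single uniform barrier $\mathcal{B}$ on some $Y_0\in\mathcal{U}$ such that every $s\in\mathcal{B}$ simultaneously decides a sufficient initial segment of $g(X)$ for all $X\in\mathcal{U}\re Y_0$ extending $s$. Define an equivalence relation $\E$ on $\mathcal{B}$ by $s\sim t\Llra g_*(s)=g_*(t)$; the Pudlak-R\"odl theorem then yields $Y\in\mathcal{U}\re Y_0$ and an inner projection $\pi:\mathcal{B}\re Y\to[\om]^{<\om}$ canonizing $\E$, so that $s\E t\Llra \pi(s)=\pi(t)$ on $\mathcal{B}\re Y$.

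The final step is to read off the iterated Fubini structure from $\pi$. The canonical projection $\pi$ is coded by a finite tree whose depth equals the rank of the barrier $\mathcal{B}\re Y$, and the pushforward $\pi_*(\mathcal{U}\re Y)$ is, essentially by the inductive definition of the Fubini product over a tree, a countable iterated Fubini power $\mathcal{U}^{\al}$ of $\mathcal{U}$ of rank $\al$ equal to this depth. Since $g_*$ factors through $\pi$ on $\mathcal{B}\re Y$ up to an injection, and since $\{g(X):X\in\mathcal{U}\re Y\}$ is cofinal in $\mathcal{V}$, it follows that $\mathcal{V}$ is Rudin-Keisler isomorphic to $\mathcal{U}^{\al}$, giving the claim.

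The main obstacle is Step~2: organizing the countably many coordinatewise decision fronts $\mathcal{F}_n$ into a single barrier $\mathcal{B}$ that captures the \emph{entire} output of $g_*$ coherently, while preserving monotonicity and remaining a legitimate front on $\mathcal{U}$. This requires a careful diagonal use of the selectivity of $\mathcal{U}$, since one Pudlak-R\"odl application on a single well-chosen barrier must do the job of canonizing $g$ globally; the translation of the resulting inner Sperner tree into the combinatorial language of iterated Fubini products is then a bookkeeping matter, but the choice of $\mathcal{B}$ is what makes the canonization nontrivial.
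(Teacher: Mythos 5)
Your overall strategy — apply Theorem \ref{thm.5} to obtain a continuous monotone cofinal map $g$, then canonize a derived equivalence relation on a barrier by Pudlak--R\"odl — is the right one, and matches the argument the paper attributes to \cite{Raghavan/Todorcevic11} and carries out in the $\mathcal{R}_1$-analogue (Proposition~\ref{prop.W=frontuf} and Theorem~\ref{thm.TukeyU_1}). But there is a genuine gap where you try to canonize the \emph{entire} finite-approximation map $g_*$. Even granting the diagonalization you flag as the ``main obstacle,'' the final step does not go through: the values $g_*(s)$ are finite \emph{subsets} of $\omega$ that overlap for different $s$, so the pushforward $(g_*)_*$ of the barrier ultrafilter lives on $\ran(g_*)\subseteq[\omega]^{<\omega}$ and there is no canonical identification of this ultrafilter with $\mathcal{V}$, which lives on $\omega$ and is generated by the \emph{unions} $g(X)=\bigcup\{g_*(s):s\sqsubset X\}$. ``$g_*$ factors through $\pi$ up to injection, hence $\mathcal{V}\cong\mathcal{U}^\alpha$'' does not follow.

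The missing idea, which makes the whole diagonalization unnecessary, is to discard all of $\hat g(b)$ except its minimum. Take $\mathcal{F}$ to be the front of minimal initial segments $b$ with $\hat{g}(b)\neq\emptyset$, and set $f(b)=\min(\hat{g}(b))$. Since $g$ is monotone and cofinal and $\mathcal{V}$ is nonprincipal, for each $X\in\mathcal{U}$ the set $f(\mathcal{F}|X)=\{\min(g(Y)):Y\in\mathcal{U},\ Y\subseteq X\}$ is an infinite subset of $g(X)$, so the filter generated by $\{f(\mathcal{F}|X):X\in\mathcal{U}\}$ is a nonprincipal ultrafilter (via the Nash-Williams theorem applied to $\mathcal{F}$) contained in $\mathcal{V}$ — hence equal to $\mathcal{V}$ by Fact~\ref{fact.ufequal}. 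Now $\mathcal{V}$ is the Rudin--Keisler image of the barrier ultrafilter under $f$, and Pudlak--R\"odl applied to the equivalence $f(a)=f(b)$ gives an inner Nash-Williams $\pi$ with $\mathcal{V}\cong\pi_*(\mathcal{U}\re Y)$; the iterated Fubini structure is then read off from $\ran(\pi)$ as a well-founded tree of finite sets. Also note that for barriers of infinite rank the canonical $\pi$ is an inner Nash-Williams map whose range is a well-founded tree, not something ``coded by a finite tree of depth equal to the rank,'' so the bookkeeping in your final step needs the countable-ordinal-rank version of the Fubini iteration, not just finite powers.
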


As discussed in the Section \ref{sec.overview}, the  proof of Theorem \ref{thm.tod} uses the Pudlak-\Rodl\ Theorem \ref{thm.PR}  which we  review below.

Given Theorem \ref{thm.tod}, one may reasonably ask whether a similar situation holds for ultrafilters which are not Ramsey but are low in the Rudin-Keisler hierarchy.
The most natural place to start is with an ultrafilter which is weakly Ramsey but not Ramsey.
Laflamme forced such an ultrafilter which has extra partition properties which allow for complete combinatorics.
Recall from \cite{Laflamme89} that an ultrafilter $\mathcal{U}$ is said to satisfy the {\em $(n,k)$ Ramsey partition property} if for all functions $f:[\om]^k\ra n^{k-1}+1$, and all partitions $\lgl A_m:m\in\om\rgl$ of  $\om$ with each $A_m\not\in\mathcal{U}$,
there is a set $X\in\mathcal{U}$ such that $|X\cap A_m|<\om$ for each $m<\om$, and $|f''[A_m\cap X]^2|\le n^{k-1}$ for each $m<\om$.

\begin{thm}[Laflamme]\label{thm.Laflammethms}
One can force an ultrafilter $\mathcal{U}_1$, by a $\sigma$-complete forcing $\bP_1$, with the following properties.
\begin{enumerate}
\item
\rm [Proposition 1.6, \cite{Laflamme89}] \it
$\mathcal{U}_1$ satisfies $(1,k)$ Ramsey partition property for all $k\ge 1$, hence $\mathcal{U}_1$ is weakly Ramsey.
\item
\rm [Proposition 1.7, \cite{Laflamme89}]  \it
$\mathcal{U}_1$ is not Ramsey.
\item
\rm [Theorem 1.15, \cite{Laflamme89}] \it
$\mathcal{U}_1$ has complete combinatorics:
Let $\kappa$ be Mahlo and $G$ be Levy$(\kappa)$-generic over $V$.  If $\mathcal{U}\in V[G]$ is a rapid ultrafilter satisfying $\RP(k)$ for all $k$ but is not Ramsey, then $\mathcal{U}$ is $\bP_1$-generic over $\HOD(\bR)^{V[G]}$.
\end{enumerate}
\end{thm}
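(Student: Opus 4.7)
The plan is to construct the forcing $\bP_1$ explicitly as a poset of infinite trees whose finite approximations encode a ``blocked'' pigeonhole structure --- essentially the conditions of the topological Ramsey space $\mathcal{R}_1$ which is to be built in later sections of the paper. A condition is an infinite tree $T$ all of whose nodes at a common level have the same prescribed branching pattern, ordered by inclusion. The generic filter for $\bP_1$ is then interpreted as an ultrafilter $\mathcal{U}_1$ on the countable set of $1$-approximations. The $\sigma$-completeness of $\bP_1$ would be proved directly by a fusion argument: given a decreasing chain $T_0 \supseteq T_1 \supseteq \dotsb$, diagonalize to produce a common lower bound whose $n$-th level agrees with that of $T_n$.

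For part (1), I would establish the $(1,k)$-Ramsey partition property by induction on $k$ via Galvin-style thinning. Given a coloring $c:[\om]^k \to n^{k-1}+1$ and a partition $\lgl A_m : m<\om\rgl$ with each $A_m\notin \mathcal{U}_1$, a density argument yields a condition whose generic meets each $A_m$ in only finitely many elements; within each block one then diagonalizes across the finitely many ``block types'' a $k$-tuple can realize. The count of such types is exactly $n^{k-1}$, which delivers the stated bound $|c''[A_m\cap X]^k|\le n^{k-1}$. Specializing to $n=1$ gives homogeneity, hence weak Ramseyness. For part (2), I would exhibit an explicit $2$-coloring of $[\om]^2$ that distinguishes pairs lying inside a common block from those straddling two blocks; the blocked structure of every condition in $\bP_1$ forces both colors to appear on every member of $\mathcal{U}_1$, so no homogeneous set exists.

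For part (3), the complete combinatorics claim, the strategy is a reflection argument in the L\'evy collapse. Suppose $\mathcal{U}\in V[G]$ is rapid, satisfies $\RP(k)$ for every $k$, and is not Ramsey. One must show $\mathcal{U}$ meets every dense open $D\sse \bP_1$ coded in $\HOD(\bR)^{V[G]}$. By the Mahloness of $\kappa$, such a $D$ reflects to a strongly inaccessible $\la<\kappa$; inside the intermediate model $V[G\re\la]$ the set $D$ becomes definable from a real, and one can apply $\RP(k)$ and rapidity of $\mathcal{U}$ successively to thin some $X\in\mathcal{U}$ into a condition lying in $D$. The non-Ramseyness assumption is what prevents $\mathcal{U}$ from collapsing to a generic for the (smaller) Mathias-type forcing and ensures the full $\bP_1$-genericity.

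The main obstacle is part (3): aligning the syntactic description of dense subsets of $\bP_1$ in $\HOD(\bR)^{V[G]}$ with the combinatorial content of $\RP(k)$ and rapidity requires a delicate absoluteness-and-reflection argument, and the fusion used to diagonalize across infinitely many dense sets must be carried out with bookkeeping that respects the block structure of $\bP_1$. Parts (1) and (2) reduce essentially to a careful pigeonhole count, but (3) is where the interaction between large-cardinal reflection and the combinatorial axioms of $\mathcal{U}_1$ genuinely bites.
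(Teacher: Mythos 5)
This statement is not proved in the paper at all; it is imported verbatim as background from Laflamme's paper \cite{Laflamme89}, with each clause carrying an explicit citation (Proposition~1.6, Proposition~1.7, Theorem~1.15). There is therefore no internal proof to compare your sketch against, and a proof attempt here is not expected. What the paper does in Section~\ref{sec.tRs} is construct the space $\mathcal{R}_1$ and observe that it sits densely in Laflamme's $\bP_1$, which is how it inherits the quoted properties; it does not re-derive Laflamme's results.

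That said, a few of the ideas in your sketch would need repair if one actually tried to reconstruct Laflamme's argument. First, $\sigma$-completeness cannot be obtained by diagonalizing a $\le$-descending chain of trees into a literal common lower bound: a fusion along a $\le$-chain typically produces a pseudo-intersection that lies below each $T_n$ only modulo a finite initial part. The forcing is $\sigma$-complete for the almost-containment ordering (the $\le^*$ that the paper itself invokes when it says one can force with $(\mathcal{R}_1,\le^*)$), and the diagonalization argument lives there. Second, ``specializing to $n=1$ gives homogeneity, hence weak Ramseyness'' misreads the parameters: the $(1,k)$ property concerns a coloring into $1^{k-1}+1=2$ colors with a blockwise bound, whereas weak Ramseyness is about reducing a $3$-coloring of $[\om]^2$ to at most $2$ colors on a set in $\mathcal{U}$; the passage from the partition properties to weak Ramseyness is genuinely part of Laflamme's analysis and is not a formal specialization. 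Third, your outline of (3) gestures at reflection in the L\'evy collapse but elides the core of the complete-combinatorics machinery (definability of dense sets from reals in $\HOD(\bR)^{V[G]}$, and the use of rapidity together with $\RP(k)$ to force genericity), which is by far the hardest part of Laflamme's theorem; as written it is more a restatement of what must be shown than an argument for it.
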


The following theorem of Blass implies that there is only one isomorphism class Rudin-Keisler below $\mathcal{U}_1$.

\begin{thm}[Blass, Theorem 5 \cite{Blass74}]\label{thm.Blass}
Every weakly Ramsey ultrafilter has up to isomorphism only one nonprincipal Rudin-Keisler predecessor, which is a Ramsey ultrafilter.
\end{thm}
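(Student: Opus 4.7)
My plan is to establish two things from the weakly Ramsey property of $\mathcal{U}$: (a) every nonprincipal RK-predecessor $\mathcal{V} = f(\mathcal{U})$ of $\mathcal{U}$ is in fact Ramsey, and (b) any two such predecessors are RK-isomorphic. Both reductions rest on a single application of the $[\om]^2 \to 3$ partition property defining weakly Ramsey ultrafilters, together with a careful case analysis of which reduced color sets are compatible with the assumptions that the predecessor is nonprincipal and strictly below $\mathcal{U}$ in the RK order.

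For (a), given any $c : [\om]^2 \to 2$, I would lift $c$ to a $3$-coloring $c'$ by setting $c'(\{m,n\}) = 2$ when $f(m) = f(n)$ and $c'(\{m,n\}) = c(\{f(m), f(n)\})$ otherwise, and apply the weakly Ramsey property to obtain $U \in \mathcal{U}$ on which $c'$ takes at most two values. The assumption $\mathcal{V} <_{RK} \mathcal{U}$ means $f$ is not one-to-one on $U$, forcing color $2$ to appear, while nonprincipality of $\mathcal{V}$ means $f$ is not constant on $U$; together these force the color set to be $\{i, 2\}$ for some $i \in \{0, 1\}$. Choosing a representative preimage in $U$ for each element of $f[U]$ then shows that $c$ is constantly $i$ on $[f[U]]^2$, and since $f[U] \in \mathcal{V}$, this homogeneous set witnesses that $\mathcal{V}$ is Ramsey.

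For (b), let $\mathcal{V}_j = f_j(\mathcal{U})$ for $j = 1, 2$ be two nonprincipal RK-predecessors of $\mathcal{U}$; by symmetry it suffices to show $\mathcal{V}_2 \le_{RK} \mathcal{V}_1$. I would color a pair $\{m, n\}$ by $0$ if both $f_1$ and $f_2$ equate $m$ and $n$, by $1$ if $f_1$ equates but $f_2$ separates, and by $2$ if $f_1$ separates; the hypotheses on $\mathcal{V}_1$ then force the reduced color set on some $U \in \mathcal{U}$ to be either $\{0, 2\}$ or $\{1, 2\}$. In the first case, $f_1(m) = f_1(n) \Rightarrow f_2(m) = f_2(n)$ for distinct $m, n \in U$, so $f_2 = h \circ f_1$ on $U$ for a suitable $h$, immediately giving $\mathcal{V}_2 \le_{RK} \mathcal{V}_1$.

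The main obstacle is ruling out the second case $\{1, 2\}$, in which every $f_1$-collision on $U$ is an $f_2$-separation. Here I would apply the weakly Ramsey property a second time with the refined coloring on $[U]^2$ that distinguishes the three surviving patterns ``$f_1$ equates and $f_2$ separates'', ``$f_1$ separates and $f_2$ equates'', and ``both separate''; on the resulting $U'' \in \mathcal{U}$ only two of these patterns survive. A short three-element argument then shows that each of the three possible pairs of surviving patterns contradicts the assumption that both $\mathcal{V}_1$ and $\mathcal{V}_2$ are nonprincipal and strictly RK-below $\mathcal{U}$. For instance, if only the first two patterns survive and $a, b, c \in U''$ satisfy $f_1(a) = f_1(b)$ and $f_1(a) \ne f_1(c)$, then $\{a, c\}$ and $\{b, c\}$ force $f_2(a) = f_2(c) = f_2(b)$, while $\{a, b\}$ forces $f_2(a) \ne f_2(b)$, a contradiction. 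Eliminating all subcases rules out $\{1, 2\}$, so by symmetry $\mathcal{V}_1 \equiv_{RK} \mathcal{V}_2$, and combined with (a) this finishes the proof.
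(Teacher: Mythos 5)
The paper cites this as Blass's Theorem~5 and gives no proof, so there is no in-paper argument to compare against; your proof is correct. In part~(a) the forced color set $\{i,2\}$ is right (color $2$ appears because $f$ is not injective on any $\mathcal U$-set, color $i\in\{0,1\}$ because $f$ is not constant on any $\mathcal U$-set), and pushing the homogeneity of $U$ down to $f[U]\in\mathcal V$ via representatives gives the homogeneous set. In part~(b), the split into $\{0,2\}$ and $\{1,2\}$ is correct, and the elimination of $\{1,2\}$ via a second application of weak Ramseyness goes through. One small streamlining: after the second application, pattern A must survive on $U''$ (otherwise $f_1$ is injective on $U''$, contradicting $\mathcal V_1<_{RK}\mathcal U$) and pattern B must survive (otherwise $f_2$ is injective on $U''$), so the surviving pair is already forced to be $\{A,B\}$, and only your three-element argument is actually needed; the subcases $\{A,C\}$ and $\{B,C\}$ never arise. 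The only point your write-up leaves implicit is the existence half of the theorem: when $\mathcal U$ is weakly Ramsey but not Ramsey, some nonprincipal strict RK-predecessor exists. This is a one-line addendum from the standard characterization of Ramsey ultrafilters as precisely the RK-minimal nonprincipal ones (so non-Ramseyness yields an $f$ that is neither constant nor one-to-one on any $\mathcal U$-set, and $f(\mathcal U)$ is the desired predecessor), but it is worth stating since the theorem as phrased asserts existence as well as uniqueness.
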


In Theorem \ref{thm.TukeyU_1} of Section \ref{sec.R1Tukey}, we extend Theorem \ref{thm.tod}.
The ultrafilter associated with $\mathcal{R}_1$ is isomorphic to $\mathcal{U}_1$, so we use the same notation to denote it.
The projection of $\mathcal{U}_1$ via a particular finite-to-one mapping produces a Ramsey ultrafilter $\mathcal{U}_0$.
In addition, there are ultrafilters which we denote $\mathcal{Y}_n$, $n\ge 2$, which are rapid p-points and are Tukey equivalent to $\mathcal{U}_1$, but are not isomorphic to $\mathcal{U}_1$.
We show in Theorem \ref{thm.TukeyU_1} that this collection of ultrafilters $\{\mathcal{U}_0,\mathcal{U}_1\}\cup\{\mathcal{Y}_n: 2\le n<\om\}$
generates, up to isomorphism, via iterated Fubini products all ultrafilters which are Tukey reducible to $\mathcal{U}_1$.
Our proof involves an application of Theorem \ref{thm.PRR(1)}, which recovers the Pudlak-\Rodl\ Theorem as a corollary.

At this point, we provide the  context for Theorem \ref{thm.PRR(1)}.
We remind the reader that $[M]^k$ denotes the collection of all subsets of the given set $M$ with cardinality $k$.
Recall the following well-known theorem of Ramsey.

\begin{thm}[Ramsey \cite{Ramsey29}]\label{thm.ramsey}
For every positive integer $k$ and every finite coloring of the family $[\bN]^k$,
there is an infinite subset $M$ of $\bN$ such that
the set $[M]^k$ of all $k$-element subsets of $M$ is monochromatic.
\end{thm}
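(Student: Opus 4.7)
My plan is to proceed by induction on $k$, which is the standard route. The base case $k=1$ is the infinite pigeonhole principle: a finite coloring of $\bN$ must have some color class that is infinite, and this gives the monochromatic $M$.

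For the inductive step, assume the theorem holds for $k$-element subsets, and let $c:[\bN]^{k+1}\to r$ be given. I would construct, recursively in $i<\om$, a decreasing chain of infinite sets $\bN = M_0 \contains M_1 \contains \cdots$, together with distinguished elements $n_i \in M_i$ and colors $d_i < r$, as follows. At stage $i$, set $n_i := \min M_i$ and define the auxiliary coloring $c_i:[M_i\setminus\{n_i\}]^k\to r$ by $c_i(X) := c(\{n_i\}\cup X)$. Applying the inductive hypothesis to $c_i$ within the infinite set $M_i\setminus\{n_i\}$ yields an infinite $M_{i+1}\sse M_i\setminus\{n_i\}$ on which $c_i$ is constantly some value $d_i < r$.

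The decisive observation is that for any strictly increasing tuple $i_0<i_1<\cdots<i_k$, the $(k+1)$-set $\{n_{i_0},n_{i_1},\ldots,n_{i_k}\}$ receives color $d_{i_0}$ under $c$, since $\{n_{i_1},\ldots,n_{i_k}\}\sse M_{i_0+1}$ and $M_{i_0+1}$ was chosen to homogenize $c_{i_0}$ with value $d_{i_0}$. Thus the coloring on $(k+1)$-subsets of $\{n_i : i < \om\}$ depends only on the least element. Now apply the base case to the sequence $(d_i)_{i<\om}$: some color $d < r$ appears for infinitely many $i$, and $M := \{n_i : d_i = d\}$ is then an infinite set on which $c$ is constantly $d$.

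There is no substantive obstacle to this argument; the only point requiring care is the bookkeeping to guarantee that each $M_i$ remains infinite so the inductive hypothesis can be reapplied, and this is automatic from the construction. Conceptually, the entire edifice of uniform barriers, canonical equivalence relations, and the topological Ramsey space $\mathcal{R}_1$ developed later in the paper can be viewed as a substantial refinement and generalization of this same inductive thinning strategy, where at each stage one uses a lower-dimensional partition result to homogenize the coloring on extensions of a chosen finite approximation.
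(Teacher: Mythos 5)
Your proof is correct; it is the standard inductive argument for the infinite Ramsey theorem (often attributed, in this form, to a fusion/pre-homogeneity construction). Note, however, that the paper states this result only as background with a citation to Ramsey's 1929 paper and does not supply a proof, so there is nothing in the paper to compare your argument against. The one bookkeeping point worth making explicit is that the $n_i$ are strictly increasing, since $n_{i+1}=\min M_{i+1}$ and $M_{i+1}\subseteq M_i\setminus\{n_i\}$, which is what guarantees that $\{n_i:i<\omega\}$ and hence your final $M$ are infinite; you gesture at this but it deserves a sentence. Your closing remark, that the machinery of fronts/barriers and the space $\mathcal{R}_1$ developed later can be read as a vast generalization of this same ``pick an approximation, then homogenize its extensions'' thinning scheme, is a fair and useful heuristic for the rest of the paper (it is essentially what axiom \textbf{A.4} and the fusion argument in Lemma \ref{lem.1} encode).
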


When one is interested in  equivalence relations on $[\bN]^k$,
the canonical equivalence relations are determined by subsets $I\sse\{0,\dots,k-1\}$ as follows:
\begin{equation}
\{x_0,\dots,x_{k-1}\} \E_I \{y_0,\dots,y_{k-1}\}\mathrm{\ iff\ } (\forall i\in I)\ x_i=y_i,
\end{equation}
where the $k$-element sets $\{x_0,\dots,x_{k-1}\}$ and $\{y_0,\dots,y_{k-1}\}$ are taken to be in increasing order.

\begin{thm}[\Erdos-Rado \cite{Erdos/Rado50}]\label{thm.ER}
For every $k\ge 1$ and every equivalence relation $\E$ on $[\bN]^k$,
there is an infinite subset $M$ of $\bN$ and an index set $I\sse\{0,\dots,k-1\}$ such that $\E\re [M]^k=\E_I\re[M]^k$.
\end{thm}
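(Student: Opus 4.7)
The plan is to prove the canonical Ramsey theorem by applying Ramsey's Theorem \ref{thm.ramsey} to a refined coloring that records the $\E$-types of all pairs of $k$-subsets, then extracting the canonical index set from the resulting homogeneous set.

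\textbf{Step 1 (Refined coloring).} For each $m$ with $k \le m \le 2k$ and each $A \in [\bN]^m$, there are only finitely many ways to write $A = s \cup t$ with $s, t \in [\bN]^k$; each such decomposition is described by a ``shape'' recording the positions of the elements of $s$ and of $t$ within the increasing enumeration of $A$. For each $A \in [\bN]^m$, let the color of $A$ be the function assigning to each shape $\sigma$ the truth value of ``$s \E t$'' for the unique pair $(s,t)$ with $s \cup t = A$ of shape $\sigma$. This is a finite coloring of $[\bN]^m$. By applying Ramsey's Theorem successively for $m = k, k+1, \dots, 2k$ (and diagonalizing), extract a single infinite $M \sse \bN$ simultaneously monochromatic for all these colorings. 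Consequently, for any two pairs $(s,t), (s',t') \in [M]^k \times [M]^k$ of the same shape, $s \E t$ holds if and only if $s' \E t'$ does.

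\textbf{Step 2 (Extracting $I$).} Define
\[ I = \{ i < k : \forall s, t \in [M]^k,\ s \E t \Lra s_i = t_i \}. \]
The forward inclusion $\E \re [M]^k \sse \E_I \re [M]^k$ is immediate from the definition of $I$.

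\textbf{Step 3 (Reverse inclusion via chaining).} Given $s, t \in [M]^k$ with $s_i = t_i$ for all $i \in I$, we must show $s \E t$. For each $i \notin I$, by definition there exists a witness pair $(s^i, t^i) \in [M]^k \times [M]^k$ with $s^i \E t^i$ but $s^i_i \ne t^i_i$; by homogeneity, every pair in $[M]^k$ with the same shape as $(s^i, t^i)$ is $\E$-equivalent. The goal is to construct a chain $s = u^0, u^1, \dots, u^N = t$ in $[M]^k$ such that each consecutive pair $(u^j, u^{j+1})$ realizes the shape of one of the witness pairs, altering the coordinates outside $I$ one at a time until the $s$-tuple is transformed into the $t$-tuple. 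Transitivity of $\E$ then gives $s \E t$.

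\textbf{Main obstacle.} The main difficulty is Step 3: one must realize each required intermediate shape inside $[M]^k$, which requires enough ``room'' between the values being swapped and may require passing to a further infinite subset of $M$ that preserves the homogeneity of Step 1 (which is always possible since monochromaticity is preserved on infinite subsets). A careful bookkeeping ensures the chain can be built using only elements of $M$, and the finite bound on shape types guarantees the construction terminates. Once this reverse inclusion is established, $\E \re [M]^k = \E_I \re [M]^k$ as claimed.
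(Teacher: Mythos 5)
The paper does not give a proof of Theorem \ref{thm.ER}; it cites the classical \Erdos-Rado result directly from \cite{Erdos/Rado50}. So your proposal must be judged on its own merits as a proof of the canonical Ramsey theorem, and there it has a genuine gap.

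Steps 1 and 2 are fine and follow the standard outline: apply Ramsey's theorem to auxiliary colorings of $[\bN]^m$ (for $k\le m\le 2k$) recording which shapes of pairs $(s,t)$ are $\E$-related, so that on the resulting homogeneous set $M$ the truth of $s\,\E\,t$ depends only on the shape of $(s,t)$; then define $I$ as the set of coordinates forced to coincide by $\E$, from which $\E\re[M]^k\sse\E_I\re[M]^k$ is immediate. The gap is in Step 3, which you correctly flag as the ``main obstacle'' but then do not actually resolve. The difficulty is not merely bookkeeping. By definition, $i\notin I$ only gives you \emph{some} witness pair $s^i\,\E\,t^i$ with $s^i_i\ne t^i_i$, but that pair may differ in several coordinates outside $I$ simultaneously (and, depending on the shape, in ways incompatible with the shape needed to move a different coordinate). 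Your chaining plan tacitly assumes, for each $i\notin I$, the existence of a good shape altering \emph{only} coordinate $i$ — that is, a pair $s\,\E\,t$ with $s_j=t_j$ for all $j\ne i$ — and that assumption is itself the crux of the \Erdos-Rado argument. Establishing it requires a further interplay of transitivity of $\E$ with homogeneity: one must realize overlapping copies of the known good shapes inside $M$, compose them, and deduce that additional shapes (in particular, the single-coordinate-change shapes) are good. Only after that does the chain $s=u^0,\dots,u^N=t$ exist, and even then one must verify that each intermediate $u^j$ is a legitimate increasing $k$-tuple in $M$, which generally forces waystations rather than a direct coordinate-by-coordinate replacement. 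As written, Step 3 restates what must be proved rather than proving it, so the argument is incomplete at precisely the point where the theorem's real content lives.
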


 Theorem \ref{thm.ER} is a strengthening of Theorem \ref{thm.ramsey} as it allows  the coloring of $[\bN]^k$ to take on infinitely many colors:
To any equivalence relation $\E$ on $[\bN]^k$, there is  a function $f:[\bN]^k\ra\bN$ such that for all $a,b\in[\bN]^k$,
$a\E b$ iff $f(a)=f(b)$.
Conversely, each function $f:[\bN]^k\ra\bN$ partitions $[\bN]^k$ into equivalence classes via the relation $\E$ defined by $a \E b$ iff $f(a)=f(b)$.

For each $k<\om$, the set $[\bN]^k$ is an example of the more general notions of fronts and barriers.

\begin{defn}[\cite{TodorcevicBK10}]\label{defn.barrier}
Let $\mathcal{F}\sse[\bN]^{<\om}$ and $M\in[\bN]^{\om}$.
$\mathcal{F}$ is a {\em front} on $M$ if
\begin{enumerate}
\item
For each $X\in[M]^{\om}$,
there is an $a\in\mathcal{F}$ for which $a\sqsubset X$; and
\item
For all $a,b\in\mathcal{F}$ such that $a\ne b$, we have $a\not\sqsubseteq b$.
\end{enumerate}
$\mathcal{F}$ is a {\em barrier} on $M$ if (1) and (2$'$) hold,
where
\begin{enumerate}
\item[(2$'$)]
For all $a,b\in\mathcal{F}$ such that $a\ne b$, we have $a\not\sse b$.
\end{enumerate}
\end{defn}

Thus, every barrier is a front.  Moreover, by a theorem of  Galvin in \cite{Galvin68}, for every front $\mathcal{F}$, there is an infinite $M\sse\bN$ for which $\mathcal{F}|M$ is a barrier.
The Pudlak-\Rodl\ Theorem  extends the \Erdos-Rado Theorem to general barriers.
If $\mathcal{F}$ is a front, a mapping $\vp:\mathcal{F}\ra\bN$ is called {\em irreducible} if it is (a) {\em inner}, meaning that $\vp(a)\sse a$ for all $a\in\mathcal{F}$, and (b) {\em Nash-Williams},
meaning that for each $a,b\in\mathcal{F}$, $\vp(a)\not\sqsubset \vp(b)$.

\begin{thm}[Pudlak-\Rodl, \cite{Pudlak/Rodl82}]\label{thm.PR}
For every barrier $\mathcal{F}$ on $\bN$ and every equivalence relation $\E$ on $\mathcal{F}$, there is an infinite $M\sse\bN$ such that the restriction of $\E$ to $\mathcal{F}|M$ is represented by an irreducible mapping defined on $\mathcal{F}|M$.
\end{thm}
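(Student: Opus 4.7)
The plan is to prove the Pudlak-\Rodl\ Theorem by induction on the rank of the barrier, using the Nash-Williams/Galvin-Prikry Ramsey property for barriers at each step. Recall that every barrier $\mathcal{F}$ on $\bN$ has a well-defined countable rank, and one may assume by passing to a restriction $\mathcal{F}|M$ (using Galvin's theorem already cited) that $\mathcal{F}$ is uniform of rank $\alpha$ for some countable ordinal $\alpha$. The base case $\alpha=1$, where $\mathcal{F}=[M]^1$ for some infinite $M$, is immediate: take $\vp$ to be the identity (or the empty map, depending on how many equivalence classes there are), which is trivially inner and Nash-Williams.

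For the inductive step, assume the result for all barriers of rank $<\alpha$. For each $n\in\bN$ set $\mathcal{F}_n=\{a\setminus\{n\}:a\in\mathcal{F},\ \min(a)=n\}$, which is a barrier of smaller rank on $\bN\cap(n,\infty)$. The equivalence $\E$ restricts naturally to an equivalence $\E_n$ on $\mathcal{F}_n$, so the inductive hypothesis furnishes an infinite $N_n\sse\bN\cap(n,\infty)$ and an irreducible map $\vp_n$ on $\mathcal{F}_n|N_n$ canonizing $\E_n$ on $\mathcal{F}_n|N_n$. A standard fusion then produces an infinite $M=\{m_0<m_1<\cdots\}$ with $m_{k+1}\in N_{m_k}$ for every $k$; restricting, we may assume each $\vp_n$ is defined on $\mathcal{F}_n|M$ for every $n\in M$.

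The main obstacle, and the heart of the argument, is gluing the $\vp_n$ into a single irreducible $\vp$ on $\mathcal{F}|M$, for which one must decide in a uniform way (i) whether the minimum of $a$ belongs to $\vp(a)$, and (ii) how equivalence classes match across different values of $\min(a)$. Both are achieved by applying Ramsey's theorem on pairs (or more generally the Nash-Williams theorem on the barrier $\mathcal{F}$ itself) to appropriate colorings; for instance, color each pair $n<n'$ from $M$ by the finite amount of information describing whether some (equivalently, every, after thinning) $a\in\mathcal{F}$ with $\min(a)=n$ is $\E$-equivalent to some $b\in\mathcal{F}$ with $\min(b)=n'$ and how their canonical representatives $\vp_n(a\setminus\{n\})$, $\vp_{n'}(b\setminus\{n'\})$ are forced to agree. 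Passing to an infinite monochromatic subset $M'\sse M$ and iterating this stabilization over all finite initial segments (a second fusion), one obtains a uniform rule for defining $\vp(a)$, namely either $\vp(a)=\vp_{\min(a)}(a\setminus\{\min(a)\})$ or $\vp(a)=\{\min(a)\}\cup\vp_{\min(a)}(a\setminus\{\min(a)\})$, with the choice dictated by the stabilized color.

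It remains to verify that the resulting $\vp$ is inner and Nash-Williams and that it canonizes $\E$ on $\mathcal{F}|M'$. Innerness is immediate from the construction. Nash-Williams is verified case-by-case: if $a,b\in\mathcal{F}|M'$ have the same minimum, $\vp(a)\not\sqsubset\vp(b)$ follows from the inductive Nash-Williams property of $\vp_{\min(a)}$; if they have distinct minima, either $\min(a)\in\vp(a)$, in which case $\vp(a)\not\sqsubset\vp(b)$ because $\min(a)\ne\min(b)$, or $\min(a)\notin\vp(a)$, in which case the stabilized Ramsey coloring was arranged precisely so that the relevant initial-segment relations cannot occur. Finally, that $\vp$ represents $\E$ on $\mathcal{F}|M'$ follows by combining the inductive canonization of $\E_n$ on each $\mathcal{F}_n|M'$ with the stabilized cross-type coloring. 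This completes the induction and hence the proof.
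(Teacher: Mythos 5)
The paper does not prove Theorem~\ref{thm.PR}; it is cited from \cite{Pudlak/Rodl82} as background. The authors' own generalization to $\mathcal{R}_1$ (Theorem~\ref{thm.PRR(1)}) proceeds via the Pr\"{o}mel--Voigt ``combinatorial forcing'' machinery of mixing and separating, not by induction on rank. Your rank-induction route is therefore a genuinely different approach, and it is a legitimate one to attempt, but as written it has a real gap at exactly the step you flag as ``the heart of the argument.''

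The gap is in the gluing. After applying the inductive hypothesis to each $\mathcal{F}_n$ and fusing to get $M$, you propose to reconcile the $\vp_n$ across different values of $\min(a)$ by coloring pairs $n<n'$ from $M$ with ``the finite amount of information describing \dots how their canonical representatives are forced to agree.'' For a general barrier there is no such finite amount: for fixed $n<n'$ the sub-barrier $\mathcal{F}_n|M$ is infinite, the range of $\vp_n$ is infinite, and the induced bipartite correspondence between $\E_n$-classes and $\E_{n'}$-classes is a priori an arbitrary (possibly infinitely complicated) partial injection. Nothing in your sketch shows this correspondence can be captured by a pair-coloring, and the parenthetical ``(equivalently, every, after thinning)'' is precisely the assertion that needs a proof --- you are invoking a stabilization whose existence is the hard content of the theorem. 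Even in the simplest case $\mathcal{F}=[\bN]^k$, the cross-level behavior of $\E$ is not a function of the two minima alone (for instance, $\E$ defined by equality of successive differences on $[\bN]^2$), so a coloring of pairs of minima cannot, by itself, encode the required information; the thinning that makes it work has to be engineered, not assumed. Moreover, your final ``uniform rule'' only decides the single bit of whether $\min(a)\in\vp(a)$. It silently assumes that the various $\vp_n$ already cohere, i.e.\ that for $a$ with $\min a=n$ and $b$ with $\min b=n'$, $a\,\E\,b$ iff $\vp_n(a\setminus\{n\})=\vp_{n'}(b\setminus\{n'\})$. The inductive hypothesis only gives you that each $\vp_n$ represents $\E$ \emph{within level $n$}; irreducible representations are not canonical objects, so different applications of the inductive hypothesis can produce mutually incompatible $\vp_n$'s, and establishing their compatibility after thinning is the actual work. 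To rescue the approach you would need to (a) formulate precisely what is being colored and show it lies in a fixed finite palette (or replace Ramsey on pairs by a Nash-Williams argument on a suitably chosen auxiliary barrier), and (b) prove, not assume, the cross-level coherence of the $\vp_n$. As it stands, the sketch moves the entire difficulty of Pudlak--R\"{o}dl into a hand-waved stabilization step.
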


Our Theorem \ref{thm.PRR(1)} generalizes the Pudlak-\Rodl\ Theorem to general barriers on the topological Ramsey space $\mathcal{R}_1$.
As a corollary, we obtain Theorem \ref{thm.original}, a generalization of the \Erdos-Rado Theorem to barriers on $\mathcal{R}_1$ which are the analogues of $[\bN]^n$.

The paper is organized as follows.
The space $\mathcal{R}_1$ is introduced in Section \ref{sec.tRs} and is proved to be a topological Ramsey space.
Section \ref{sec.canonizationsR_1} contains the Ramsey-classification  Theorems \ref{thm.original} and \ref{thm.PRR(1)} for barriers on $\mathcal{R}_1$.
Then Theorem \ref{thm.PRR(1)} is applied in Section \ref{sec.R1Tukey} to classify the Rudin-Keisler types within the Tukey types of ultrafilters Tukey reducible to $\mathcal{U}_1$.


\section{The topological Ramsey space $\mathcal{R}_1$}\label{sec.tRs}

Recall that the Ellentuck space
consists of  $[\bN]^{\om}$, the collection of all infinite subsets of $\bN$ enumerated in strictly increasing order, along with the topology given by the basic open sets
$[a,B]:=\{A\in[\bN]^{\om}:a\sqsubseteq A$ and $A\sse B\}$,
where $a$ is a finite subset of $\bN$ and $B\in[\bN]^{\om}$.
This topology is a refinement of the usual metric topology on $[\bN]^{\om}$ produced by the clopen sets $[a,\bN]$, for $a$ a finite subset of $\bN$.
The Ellentuck space is the fundamental example of the more general notion of a topological Ramsey space.

For the convenience of the reader, we include the following definitions and theorems from Chapter 5, Section 1 \cite{TodorcevicBK10}.
The  axioms \bf A.1 \rm - \bf A.4 \rm
are defined for triples
$(\mathcal{R},\le,r)$
of objects with the following properties.
$\mathcal{R}$ is a nonempty set,
$\le$ is a quasi-ordering on $\mathcal{R}$,
 and $r:\mathcal{R}\times\om\ra\mathcal{AR}$ is a mapping giving us the sequence $(r_n(\cdot)=r(\cdot,n))$ of approximation mappings, where
$\mathcal{AR}$ is  the collection of all finite approximations to members of $\mathcal{R}$.
For $a\in\mathcal{AR}$ and $A,B\in\mathcal{R}$,
\begin{equation}
[a,B]=\{A\in\mathcal{R}:A\le B\mathrm{\ and\ }(\exists n)\ r_n(A)=a\}.
\end{equation}

For $a\in\mathcal{AR}$, let $|a|$ denote the length of the sequence $a$.  Thus, $|a|$ equals the integer $k$ for which $a=r_k(a)$.
For $a,b\in\mathcal{AR}$, $a\sqsubseteq b$ if and only if $a=r_m(b)$ for some $m\le |b|$.
$a\sqsubset b$ if and only if $a=r_m(b)$ for some $m<|b|$.
For each $n<\om$, $\mathcal{AR}_n=\{r_n(A):A\in\mathcal{R}\}$.
If $n>|a|$, then  $r_n[a,A]$ is the collection of all $b\in\mathcal{AR}_n$ such that $a\sqsubset b$ and $b\le_{\mathrm{fin}} A$.
\vskip.1in

\begin{enumerate}
\item[\bf A.1]\rm
\begin{enumerate}
\item
$r_0(A)=\emptyset$ for all $A\in\mathcal{R}$.\vskip.05in
\item
$A\ne B$ implies $r_n(A)\ne r_n(B)$ for some $n$.\vskip.05in
\item
$r_n(A)=r_m(B)$ implies $n=m$ and $r_k(A)=r_k(B)$ for all $k<n$.\vskip.1in
\end{enumerate}
\item[\bf A.2]\rm
There is a quasi-ordering $\le_{\mathrm{fin}}$ on $\mathcal{AR}$ such that\vskip.05in
\begin{enumerate}
\item
$\{a\in\mathcal{AR}:a\le_{\mathrm{fin}} b\}$ is finite for all $b\in\mathcal{AR}$,\vskip.05in
\item
$A\le B$ iff $(\forall n)(\exists m)\ r_n(A)\le_{\mathrm{fin}} r_m(B)$,\vskip.05in
\item
$\forall a,b,c\in\mathcal{AR}[a\sqsubset b\wedge b\le_{\mathrm{fin}} c\ra\exists d\sqsubset c\ a\le_{\mathrm{fin}} d]$.\vskip.1in
\end{enumerate}
\end{enumerate}

$\depth_B(a)$ is the least $n$, if it exists, such that $a\le_{\mathrm{fin}}r_n(B)$.
If such an $n$ does not exist, then we write $\depth_B(a)=\infty$.
If $\depth_B(a)=n<\infty$, then $[\depth_B(a),B]$ denotes $[r_n(B),B]$.

\begin{enumerate}
\item[\bf A.3] \rm
\begin{enumerate}
\item
If $\depth_B(a)<\infty$ then $[a,A]\ne\emptyset$ for all $A\in[\depth_B(a),B]$.\vskip.05in
\item
$A\le B$ and $[a,A]\ne\emptyset$ imply that there is $A'\in[\depth_B(a),B]$ such that $\emptyset\ne[a,A']\sse[a,A]$.\vskip.1in
\end{enumerate}
\end{enumerate}

\begin{enumerate}
\item[\bf A.4]\rm
If $\depth_B(a)<\infty$ and if $\mathcal{O}\sse\mathcal{AR}_{|a|+1}$,
then there is $A\in[\depth_B(a),B]$ such that
$r_{|a|+1}[a,A]\sse\mathcal{O}$ or $r_{|a|+1}[a,A]\sse\mathcal{O}^c$.\vskip.1in
\end{enumerate}

The topology on $\mathcal{R}$ is given by the basic open sets
$[a,B]$.
This topology is called the {\em natural} or {\em Ellentuck} topology on $\mathcal{R}$;
it extends the usual metrizable topology on $\mathcal{R}$ when we consider $\mathcal{R}$ as a subspace of the Tychonoff cube $\mathcal{AR}^{\bN}$.
Given the Ellentuck topology on $\mathcal{R}$,
the notions of nowhere dense, and hence of meager are defined in the natural way.
Thus, we may say that a subset $\mathcal{X}$ of $\mathcal{R}$ has the {\em property of Baire} iff $\mathcal{X}=\mathcal{O}\cap\mathcal{M}$ for some Ellentuck open set $\mathcal{O}\sse\mathcal{R}$ and Ellentuck meager set $\mathcal{M}\sse\mathcal{R}$.

\begin{defn}[\cite{TodorcevicBK10}]\label{defn.5.2}
A subset $\mathcal{X}$ of $\mathcal{R}$ is {\em Ramsey} if for every $\emptyset\ne[a,A]$,
there is a $B\in[a,A]$ such that $[a,B]\sse\mathcal{X}$ or $[a,B]\cap\mathcal{X}=\emptyset$.
$\mathcal{X}\sse\mathcal{R}$ is {\em Ramsey null} if for every $\emptyset\ne [a,A]$, there is a $B\in[a,A]$ such that $[a,B]\cap\mathcal{X}=\emptyset$.

A triple $(\mathcal{R},\le,r)$ is a {\em topological Ramsey space} if every property of Baire subset of $\mathcal{R}$ is Ramsey and if every meager subset of $\mathcal{R}$ is Ramsey null.
\end{defn}

We shall need the following result which can be found as Theorem
5.4 in \cite{TodorcevicBK10}.

\begin{thm}[Abstract Ellentuck Theorem]\label{thm.AET}\rm \it
If $(\mathcal{R},\le,r)$ is closed (as a subspace of $\mathcal{AR}^{\bN}$) and satisfies axioms {\bf A.1}, {\bf A.2}, {\bf A.3}, and {\bf A.4},
then every property of Baire subset of $\mathcal{R}$ is Ramsey,
and every meager subset is Ramsey null;
in other words,
the triple $(\mathcal{R},\le,r)$ forms a topological Ramsey space.
\end{thm}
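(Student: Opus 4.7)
The plan is to follow the standard strategy for abstract Ellentuck-type theorems: establish the combinatorial core using axiom \textbf{A.4} as the pigeonhole engine, then bootstrap from basic open sets to metrically open sets via a fusion argument, and finally extend to property-of-Baire sets using a Baire-category/Ramsey-nullity argument for meager sets.

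First I would introduce the standard combinatorial-forcing vocabulary. For $\mathcal{X}\sse\mathcal{R}$, say that $[a,B]$ \emph{accepts} $\mathcal{X}$ if $[a,B]\sse\mathcal{X}$, that $[a,B]$ \emph{rejects} $\mathcal{X}$ if no $A\in[\depth_B(a),B]$ satisfies $[a,A]\sse\mathcal{X}$, and that $[a,B]$ is \emph{decided} by $\mathcal{X}$ otherwise it is \emph{undecided}. Axiom \textbf{A.3} gives monotonicity (if $[a,B]$ accepts or rejects $\mathcal{X}$ then so does every $[a,A]$ for $A\in[\depth_B(a),B]$). The key step is the \emph{amalgamation lemma}: for every $[a,B]$ and every $\mathcal{X}$, there exists $A\in[\depth_B(a),B]$ such that for every $b\in r_{|a|+1}[a,A]$ the basic set $[b,A]$ is decided. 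This is exactly where \textbf{A.4} enters: we split $r_{|a|+1}[a,B]$ into the one-step extensions that are accepted and those that are not, apply \textbf{A.4} to homogenize, and iterate finitely many times if several levels of decision are required.

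Second, I would convert the amalgamation lemma into a global decision via a fusion. Starting from $[a,B]$, build a $\le$-decreasing sequence $B=B_0\ge B_1\ge B_2\ge\cdots$ with $\depth_{B_n}(r_{|a|+n}(B_n))$ controlled, so that at stage $n$ every $b\in\mathcal{AR}_{|a|+n}$ lying $\le_{\mathrm{fin}}$-below $B_n$ with $a\sqsubset b$ is decided by $[b,B_n]$. Hypotheses \textbf{A.2}(a) and \textbf{A.2}(c), together with $\mathcal{R}$ being closed in $\mathcal{AR}^{\bN}$, let us take a fusion limit $A\le B$ whose finite approximations agree with those chosen at each stage. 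A short argument using the definitions then shows: if $[a,A]$ does not accept $\mathcal{X}$, then every $[b,A]$ with $a\sqsubset b$ rejects $\mathcal{X}$; when $\mathcal{X}$ is open, rejection at every finite approximation forces $[a,A]\cap\mathcal{X}=\emptyset$. This proves every open (metrically or Ellentuck) subset is Ramsey.

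Third, I would extend to property-of-Baire sets by showing meager sets are Ramsey null. For an Ellentuck-nowhere-dense $\mathcal{N}$ and any $[a,B]$, the combinatorial-forcing argument above applied to the open set $\mathcal{R}\setminus\overline{\mathcal{N}}$ (whose density is precisely nowhere-density of $\mathcal{N}$) yields $A\in[\depth_B(a),B]$ with $[a,A]\cap\mathcal{N}=\emptyset$. For a meager $\mathcal{M}=\bigcup_n\mathcal{N}_n$, diagonalize: recursively pick $A_n\in[\depth_{A_{n-1}}(\cdot),A_{n-1}]$ killing $\mathcal{N}_n$ on successively deeper approximations, and again take a fusion limit $A$. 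Finally, if $\mathcal{X}=\mathcal{O}\,\sd\,\mathcal{M}$ has the property of Baire, first apply the open-set case to $\mathcal{O}$ inside $[a,B]$ to get $A'$ deciding $\mathcal{O}$, then apply Ramsey-nullity of $\mathcal{M}$ inside $[a,A']$ to absorb the symmetric difference; the resulting $A$ witnesses that $\mathcal{X}$ is Ramsey. The main obstacle throughout is the bookkeeping in the fusion step: one must keep $\depth_{B_n}(\cdot)$ finite and ensure the limit really lies in $\mathcal{R}$, which is exactly what closedness together with \textbf{A.1}--\textbf{A.3} are crafted to make routine.
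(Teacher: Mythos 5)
The paper does not prove this theorem at all: it is stated as a known result and cited as Theorem 5.4 of \cite{TodorcevicBK10}, so there is no ``paper's own proof'' to compare against. Your outline is, however, a recognizable sketch of the standard combinatorial-forcing proof from that reference, and the overall skeleton (accept/reject/decide $\rightarrow$ amalgamation $\rightarrow$ fusion $\rightarrow$ open sets Ramsey $\rightarrow$ meager Ramsey null $\rightarrow$ property of Baire) is the right one.

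There is one place where the logic, as written, has a gap. You state the amalgamation lemma as ``there exists $A\in[\depth_B(a),B]$ such that every $b\in r_{|a|+1}[a,A]$ is \emph{decided}'' and claim this comes from applying \textbf{A.4} to the coloring ``accepted / not accepted.'' But \textbf{A.4}-homogenization of that coloring only yields that all one-step extensions land on the same side; on the ``not accepted'' side it does not by itself give \emph{rejection}, so it does not prove the amalgamation lemma as you have stated it. The actual argument needs two ingredients you have silently merged: (i) individual decidability of any single $b$ (from \textbf{A.3}(b), plus the fact that ``some extension accepts'' is an open condition), propagated through a fusion so that a single $A$ decides every $b\sqsupseteq a$ with $b\le_{\mathrm{fin}}A$ and $\depth_A(b)$ bounded at each stage; and (ii) the \textbf{A.4}-homogenization of accept/reject, combined with the observation that if all one-step extensions of $a$ inside $C$ are accepted then $C$ accepts $a$, so that rejection of $a$ forces rejection of every one-step extension. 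It is (ii), not the decision step, where \textbf{A.4} is indispensable, and it is also what justifies your later assertion that if $[a,A]$ does not accept $\mathcal{X}$ then every $[b,A]$ with $a\sqsubset b$ rejects $\mathcal{X}$ --- that implication is precisely the content of (ii) iterated, not merely ``a short argument using the definitions.'' The rest of your plan (depth-controlled fusion, diagonalization for meager sets, absorbing the meager symmetric difference for Baire sets) is the standard route and is fine.
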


Extensions  of the Silver and Galvin-Prikry Theorems to  topological Ramsey spaces have been proved in \cite{TodorcevicBK10}.
In particular,
every topological Ramsey space has the property that every Souslin-measurable set is Ramsey.
See Chapter 5 of \cite{TodorcevicBK10} for further information.

Certain types of subsets of the collection of approximations $\mathcal{AR}$ of a given topological Ramsey space have the Ramsey property.

\begin{defn}[\cite{TodorcevicBK10}]\label{defn.5.16}
A family $\mathcal{F}\sse\mathcal{AR}$ of finite approximations is
\begin{enumerate}
\item
{\em Nash-Williams} if $a\not\sqsubseteq b$ for all $a\ne b\in\mathcal{F}$;
\item
{\em Sperner} if $a\not\le_{\mathrm{fin}} b$ for all $a\ne b\in\mathcal{F}$;
\item
{\em Ramsey} if for every partition $\mathcal{F}=\mathcal{F}_0\cup\mathcal{F}_1$ and every $X\in\mathcal{R}$,
there are $Y\le X$ and $i\in\{0,1\}$ such that $\mathcal{F}_i|Y=\emptyset$.
\end{enumerate}
\end{defn}

The next theorem appears as  Theorem 5.17  in \cite{TodorcevicBK10}.

\begin{thm}[Abstract Nash-Williams Theorem]\label{thm.ANW}
Suppose $(\mathcal{R},\le,r)$ is a closed triple that satisfies {\bf A.1} - {\bf A.4}. Then
every Nash-Williams family of finite approximations is Ramsey.
\end{thm}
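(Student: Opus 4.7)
The plan is to reduce the Abstract Nash-Williams Theorem to the Abstract Ellentuck Theorem (Theorem \ref{thm.AET}) by exhibiting, from a given partition of a Nash-Williams family, two Ellentuck-open subsets of $\mathcal{R}$ and applying the dichotomy from Ramseyness twice.

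Fix a Nash-Williams family $\mathcal{F}\sse\mathcal{AR}$, a partition $\mathcal{F}=\mathcal{F}_0\cup\mathcal{F}_1$, and $X\in\mathcal{R}$. For each $i\in\{0,1\}$ define
$$\mathcal{X}_i=\{A\in\mathcal{R}:(\exists n)\,r_n(A)\in\mathcal{F}_i\}=\bigcup_{a\in\mathcal{F}_i}\{A\in\mathcal{R}:(\exists n)\,r_n(A)=a\}.$$
Each set on the right-hand union is a basic Ellentuck-open set (the basic open neighborhoods generated by a finite approximation $a$), so each $\mathcal{X}_i$ is Ellentuck open. In particular, each $\mathcal{X}_i$ has the property of Baire, and by Theorem \ref{thm.AET} each $\mathcal{X}_i$ is Ramsey in the sense of Definition \ref{defn.5.2}.

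Apply Ramseyness of $\mathcal{X}_0$ to the basic open set $[\emptyset,X]$ (recall $r_0(A)=\emptyset$ by \textbf{A.1}(a), so $[\emptyset,X]=\{A:A\le X\}$) to obtain $Y_0\le X$ with either $[\emptyset,Y_0]\cap\mathcal{X}_0=\emptyset$ or $[\emptyset,Y_0]\sse\mathcal{X}_0$. In the first alternative I claim $\mathcal{F}_0|Y_0=\emptyset$: if some $a\in\mathcal{F}_0$ satisfied $a\le_{\mathrm{fin}}Y_0$, then $\depth_{Y_0}(a)<\infty$ and \textbf{A.3}(a) would give a nonempty $[a,Y_0]$, whose members lie in $\mathcal{X}_0$, contradicting the choice of $Y_0$. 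In the second alternative, apply Ramseyness of $\mathcal{X}_1$ to $[\emptyset,Y_0]$ to obtain $Y_1\le Y_0$ satisfying the analogous dichotomy; if $[\emptyset,Y_1]\cap\mathcal{X}_1=\emptyset$ the same argument yields $\mathcal{F}_1|Y_1=\emptyset$, and we are done.

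It remains to rule out the leftover possibility $[\emptyset,Y_1]\sse\mathcal{X}_0\cap\mathcal{X}_1$. In this case any $A\le Y_1$ would have indices $m,n$ with $r_m(A)\in\mathcal{F}_0$ and $r_n(A)\in\mathcal{F}_1$; since $\mathcal{F}_0$ and $\mathcal{F}_1$ are disjoint we must have $m\ne n$, but then $r_m(A)$ and $r_n(A)$ are two distinct elements of $\mathcal{F}$ with one a proper $\sqsubset$-initial segment of the other, contradicting the Nash-Williams property of $\mathcal{F}$. This contradiction completes the argument. The only real subtlety is the verification of the critical implication ``$a\le_{\mathrm{fin}}Y\Rightarrow[a,Y]\ne\emptyset$,'' which is exactly the content of \textbf{A.3}(a); everything else is a clean invocation of the Abstract Ellentuck Theorem together with the combinatorial incompatibility of the Nash-Williams property with a partition-generated $\sqsubset$-chain.
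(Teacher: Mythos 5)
Your proof is correct. The paper does not supply its own argument for this theorem---it cites Theorem 5.17 of the Todorcevic book listed in the bibliography---and the standard proof there is exactly this reduction to the Abstract Ellentuck Theorem via the open sets $\mathcal{X}_i$, so you have reproduced the intended argument. Two small remarks. First, a single application of the Abstract Ellentuck Theorem already suffices: in the case $[\emptyset,Y_0]\subseteq\mathcal{X}_0$, if some $a\in\mathcal{F}_1$ satisfied $a\le_{\mathrm{fin}}Y_0$ then \textbf{A.3}(a) would give a nonempty $[a,Y_0]\subseteq[\emptyset,Y_0]\subseteq\mathcal{X}_0$, and any $A$ in it would have $r_{|a|}(A)=a\in\mathcal{F}_1$ and, since $A\in\mathcal{X}_0$, some $r_m(A)\in\mathcal{F}_0$; these are two distinct members of $\mathcal{F}$ with one a $\sqsubset$-initial segment of the other, contradicting the Nash-Williams property. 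Thus $\mathcal{F}_1|Y_0=\emptyset$ directly, which collapses your second dichotomy and the case-elimination at the end into a single step. Second, a minor wording point: $\{A:\exists n\,r_n(A)=a\}$ is not itself a basic Ellentuck-open set but rather a union $\bigcup_{B\in\mathcal{R}}[a,B]$ of basic open sets; it is of course still open, so your invocation of the Abstract Ellentuck Theorem is sound.
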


\begin{defn}\label{def.frontR1}
Suppose $(\mathcal{R},\le,r)$ is a closed triple that satisfies {\bf A.1} - {\bf A.4}.
Let $X\in\mathcal{R}$.
A family $\mathcal{F}\sse\mathcal{AR}$ is a {\em front} on $[0,X]$ if
\begin{enumerate}
\item
For each $Y\in[0,X]$, there is an $a\in \mathcal{F}$ such that $a\sqsubset Y$; and
\item
$\mathcal{F}$ is Nash-Williams.
\end{enumerate}
$\mathcal{F}$ is a {\em barrier}  if (1) and ($2'$) hold,
where
\begin{enumerate}
\item[(2$'$)]
$\mathcal{F}$ is Sperner.
\end{enumerate}
\end{defn}

\begin{rem}
Any front on a topological Ramsey space is Nash-Williams; hence is Ramsey, by Theorem \ref{thm.ANW}.
\end{rem}

Now we introduce the topological Ramsey space $(\mathcal{R}_1,\le_1,r)$.
This space was inspired by
 Laflamme's forcing $\bP_1$ which adds an ultrafilter $\mathcal{U}_1$ which is
not Ramsey, but is weakly Ramsey in a strong sense.
$\mathcal{R}_1$ forms a dense subset of $\bP_1$.
Much more will be said about this in Section \ref{sec.R1Tukey}.

\begin{defn}[$(\mathcal{R}_1,\le_1,r)$]\label{defn.R_1}
Let $\mathbb{T}$ denote the following infinite tree of height $2$.
\begin{equation}
\mathbb{T}=\{\lgl\rgl\}\cup\{\lgl n\rgl:n<\om\}\cup\bigcup_{n<\om}\{\lgl n,i\rgl:i\le n\}.
\end{equation}
$\mathbb{T}$ is to be thought of as an infinite sequence of finite trees of height $2$,
where the {\em $n$-th subtree} of $\mathbb{T}$ is
\begin{equation}
\mathbb{T}(n)=\{\lgl\rgl,\lgl n\rgl,\lgl n,i\rgl:i\le n\}.
\end{equation}
The members $X$ of $\mathcal{R}_1$ are infinite subtrees of $\mathbb{T}$ which have the same structure as $\mathbb{T}$.
That is, a tree $X\sse\mathbb{T}$ is in $\mathcal{R}_1$ if and only if
 there is a strictly increasing sequence $(k_n)_{n<\om}$ such that
\begin{enumerate}
\item
  $X\cap \mathbb{T}(k_n)\cong \mathbb{T}(n)$ for each $n<\om$; and
\item
   whenever $X\cap \mathbb{T}(j)\ne\emptyset$, then $j=k_n$ for some $n<\om$.
\end{enumerate}
We let $X(n)$ denote $X\cap \mathbb{T}(k_n)$.
We shall call $X(n)$ the {\em $n$-th tree} of $X$.
For $n<\om$, $r_n(X)$ denotes $\bigcup_{i<n}X(i)$.
$\mathcal{AR}_n=\{r_n(X):X\in\mathcal{R}_1\}$,
 and
$\mathcal{AR}=\bigcup_{n<\om}\mathcal{AR}_n$.

For $X,Y\in\mathcal{R}_1$,
define $Y\le_1 X$ if and only if
there is a strictly increasing sequence $(k_n)_{n<\om}$
such that for each $n$, $Y(n)$ is a subtree of $X(k_n)$.
Let $a,b\in\mathcal{AR}$ and $A,B\in\mathcal{R}_1$.
The quasi-ordering $\le_{\mathrm{fin}}$ on $\mathcal{AR}$ is defined as follows:
$b\le_{\mathrm{fin}} a$ if and only if there are $n\le m$ and a strictly increasing sequence $(k_i)_{i<n}$  with $k_{n-1}<m$ such that
$a\in\mathcal{AR}_m$,
 $b\in\mathcal{AR}_n$, and
for each $i< n$,  $b(i)$ is a subtree of $a(k_i)$.
We write $a\le_{\mathrm{fin}} B$ if and only if there is an $n$ such that $a\le_{\mathrm{fin}} r_n(B)$.
The basic open sets are given by
$[a,B]=\{X\in\mathcal{R}_1:a\sqsubseteq X$ and $X\le_1 B\}$.
\end{defn}

\begin{rem}
Because of the structure of $\bT$ and the definition of $\mathcal{R}_1$,
it turns out that for any two $X,Y\in\mathcal{R}_1$,
 $Y\le_1 X$ if and only if $Y\sse X$.
 Likewise, for any $a,b\in\mathcal{AR}$,
$a\le_{\mathrm{fin}}b$ if and only if $a\sse b$. 
\end{rem}

We now present some notation which will be quite useful in the next section.
$A/b$ denotes $A\setminus r_n(A)$, where $n$ is least such that $\depth_{\bT}(r_n(A))\ge\depth_{\bT}(b)$.
$\mathcal{R}_1(k)=\{X(k):X\in\mathcal{R}_1\}$;
 $\mathcal{R}_1(k)|A=\{X(k):X\in\mathcal{R}_1$ and $X(k)\sse A\}$;
 and
$\mathcal{R}_1(k)|A/b=\{X(k):X\in\mathcal{R}_1$, $X(k)\sse A/b\}$.

We now arrive at the main fact about $\mathcal{R}_1$ of this section.

\begin{thm}
$(\mathcal{R}_1,\le,r)$ is a topological Ramsey space.
\end{thm}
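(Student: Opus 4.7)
My plan is to invoke the Abstract Ellentuck Theorem (Theorem~\ref{thm.AET}), so I need to verify that $(\mathcal{R}_1,\le_1,r)$ is closed as a subspace of $\mathcal{AR}^{\bN}$ and satisfies axioms \textbf{A.1}--\textbf{A.4}. Closedness should be straightforward: a convergent sequence $(r(X_i))_{i<\omega}$ in $\mathcal{AR}^{\bN}$ yields a $\sqsubset$-coherent chain $(a_n)_{n<\omega}$ in $\mathcal{AR}$, and its union will be an infinite subtree of $\bT$ of exactly the shape demanded by Definition~\ref{defn.R_1}. Axiom \textbf{A.1} is immediate from $r_n(X)=\bigcup_{i<n}X(i)$, while axiom \textbf{A.2} reduces, via the remark that $Y\le_1 X$ iff $Y\sse X$ and $b\le_{\mathrm{fin}} a$ iff $b\sse a$, to elementary containment facts about finite trees; and for axiom \textbf{A.3} it suffices to amalgamate $a$ with any tail of $B$ past $r_p(B)$, where $p=\depth_B(a)$.

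\medskip

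The hard part will be \textbf{A.4}, the pigeonhole principle. I would fix $a\in\mathcal{AR}_m$, $B\in\mathcal{R}_1$ with $p=\depth_B(a)<\infty$, and $\mathcal{O}\sse\mathcal{AR}_{m+1}$, then color every subtree $t\cong\bT(m)$ sitting inside some $B(k)$ (with $k\ge p$) by $c(t)=1$ iff $a\cup\{t\}\in\mathcal{O}$. The crucial observation is that such a $t$ is determined by a choice of $m+1$ of the $k+1$ leaves of $B(k)$, so $c$ restricted to $B(k)$ is a $2$-coloring of the $(m+1)$-subsets of an $(k+1)$-element set. Letting $R_j=R(j+1;m+1;2)$ denote the classical Ramsey number, I plan to pick, for each $j\ge p$, an index $i_j$ in $B$'s index sequence (strictly increasing in $j$) with $B(i_j)$ having at least $R_j$ leaves; this is always possible because $B(i)$ has $i+1$ leaves. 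Finite Ramsey then produces a homogeneous subtree $T_j\sse B(i_j)$ with $T_j\cong\bT(j)$ and a color $d_j\in\{0,1\}$ valid for every $(m+1)$-leaf-subtree of $T_j$.

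\medskip

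Since $(d_j)_{j\ge p}$ lives in $\{0,1\}^{\omega}$, some $d^*\in\{0,1\}$ is attained on an infinite set of indices, and I would extract a subsequence $(j_k)_{k<\omega}$ with $d_{j_k}=d^*$, further thinning so that $j_k\ge p+k$. The desired $A$ is defined by $A(\ell)=B(\ell)$ for $\ell<p$ and, for $k\ge 0$, $A(p+k)$ chosen to be any subtree of $T_{j_k}$ isomorphic to $\bT(p+k)$, which exists because $T_{j_k}$ has $j_k+1\ge p+k+1$ leaves. To verify \textbf{A.4}, I would argue that any $b\in r_{m+1}[a,A]$ has the form $a\cup\{t\}$ where $t\cong\bT(m)$ must sit inside some $A(p+k)$: for any $X\in[a,A]$, the new tree $X(m)$ must live at a $\bT$-level strictly above those occupied by $a$, hence inside one of the newly-selected trees. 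Then $t$'s $m+1$ leaves form a subset of $T_{j_k}$'s leaves, and homogeneity yields $c(t)=d^*$, so $r_{m+1}[a,A]\sse\mathcal{O}$ if $d^*=1$ and $r_{m+1}[a,A]\cap\mathcal{O}=\emptyset$ otherwise. The Abstract Ellentuck Theorem then finishes the argument. The principal technical point is the two-stage Ramsey argument: one cannot homogenize by a single finite Ramsey application across all trees, so one must first homogenize within each $B(i_j)$ separately and then pigeonhole the resulting color sequence.
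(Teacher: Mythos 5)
Your proposal is correct and follows essentially the same route as the paper's proof: verify closedness and \textbf{A.1}--\textbf{A.3} by elementary tree bookkeeping, and establish \textbf{A.4} by the two-stage argument of first applying the Finite Ramsey Theorem within each block $B(i_j)$ (treating $\bT(m)$-subtrees as $(m+1)$-subsets of leaves) and then pigeonholing the resulting sequence of block colors to extract a monochromatic subsequence. The only cosmetic differences are that you spell out the Ramsey numbers $R(j+1;m+1;2)$ explicitly where the paper just says ``taking $N_0$ large enough,'' and a small slip in notation ($a\cup\{t\}$ should be $a\cup t$, and in \textbf{A.3} the tail should be drawn from $A$ rather than $B$), neither of which affects the substance.
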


\begin{proof}
By the Abstract Ellentuck Theorem,
it suffices to show that $(\mathcal{R}_1,\le_1,r)$  is a closed subspace of the Tychonov power $\mathcal{AR}^{\bN}$ of $\mathcal{AR}$ with its discrete topology, and that $(\mathcal{R}_1,\le_1,r)$ satisfies axioms {\bf A.1} - {\bf A.4}.

$\mathcal{R}_1$ is identified with the subspace of $\mathcal{AR}^{\bN}$  consisting of all sequences $\lgl a_n:n<\om\rgl$ such that there is an $A\in\mathcal{R}_1$ such that for each $n<\om$,
$a_n=r_n(A)$.
That $\mathcal{R}_1$ is a closed subspace of
$\mathcal{AR}^{\bN}$ follows from the fact that
given any sequence $\lgl a_n:n<\om\rgl$ such that each $a_n\in\mathcal{AR}_n$ and $r_n(a_k)=a_n$ for each $k\ge n$, the union  $A=\bigcup_{n<\om}a_n$ is a member of $\mathcal{R}_1$.

\bf A.1.  \rm
(1) By definition, $r_0(A)=\emptyset$ for all $A\in\mathcal{R}_1$.
(2) $A\ne B$ implies that for some $n$, $r_n(A)\ne r_n(B)$.
(3) If $r_n(A)=r_m(B)$, then it must be the case that $n=m$ and $r_k(A)=r_k(B)$ for all $k<n$.

\bf A.2. \rm
(1) For each $b\in\mathcal{AR}$, there is a unique $n$ such that $b\in\mathcal{AR}_n$.
So,
\begin{equation}
\{a\in\mathcal{AR}:a\le_{\mathrm{fin}} b\}=
\bigcup_{k\le n}\{a\in\mathcal{AR}_k: \forall i\le k\ \exists m_i\le n\, (a(i)\sse b(m_i))\}.
\end{equation}
This set is finite.
(2) $A\le_1 B$ if and only if for each $n$ there is an $m$ such that $r_n(A)\le_{\mathrm{fin}} r_m(B)$.
This is clear from the definition.
(3)
For each $a,b\in\mathcal{AR}$, if $a\sqsubseteq b$ and $b\le_{\mathrm{fin}} c$,
then in fact $a\le_{\mathrm{fin}} c$.

\bf A.3. \rm
(1)
If $\depth_B(a)=n<\infty$,
then  $a\le_{\mathrm{fin}} r_n(B)$.
If  $A\in[\depth_B(a),B]$,
then $r_n(A)=r_n(B)$ and for each $k>n$,
there is an $m_k$ such that $A(k)\sse B(m_k)$.
Letting $l$ be such that $a\in\mathcal{AR}_l$,
for each $i\ge 1$,
let $w(l+i)$ be any subtree of $A(n+i)$ isomorphic to  $\mathbb{T}(l+i)$.
Let $A'=a\cup\bigcup\{w(l+i):i\ge 1\}$.
Then $A'\in[a,A]$, so $[a,A]\ne\emptyset$.

(2) Suppose $A\le_1 B$ and $[a,A]\ne\emptyset$.
Then $\depth_B(a)<\infty$ since $A\le_1 B$.
Let $n=\depth_B(a)$ and $k=\depth_A(a)$.
Note that $k\le n$ and for each $j\ge k$, $A(j)\sse B(l)$ for some $l\ge n$.
Let $A'=r_n(B)\cup\bigcup\{A(n+i):i<\om\}$.
Then $A'\in[\depth_B(a),B]$ and
$\emptyset\ne[a,A']\sse[a,A]$.

\bf A.4. \rm
Suppose that $\depth_B(a)=n<\infty$ and $\mathcal{O}\sse\mathcal{AR}_{|a|+1}$.
Let $k=|a|$.
Recall that
 $r_{k+1}[a,B]$ is defined to be the collection of $c\in\mathcal{AR}_{k+1}$ such that $r_k(c)=r_k(a)$ and $c(k)$ is a subtree of $B(m)$ for some $m\ge n$.
So we may think of $\mathcal{O}$ as a 2-coloring on the collection of  subtrees $u\sse B(m)$ isomorphic to $\mathbb{T}(k)$ for some $m\ge n$.

Say a set $u\in\mathcal{R}_1(k)|B/r_n(B)$
has color $0$ if $a\cup u$ is in $\mathcal{O}$ and has color $1$ if $a\cup u$ is in $\mathcal{O}^c$.
Identifying each tree isomorphic to $\bT(m)$ with its leaves, the Finite Ramsey Theorem may be applied.
By the Finite Ramsey Theorem,  taking $N_0$ large enough, there is a subtree $w(n)\sse
B(N_0)$ isomorphic to $\mathbb{T}(n)$ such that the collection of all subtrees of $w(n)$ which are isomorphic to $\mathbb{T}(k)$ is monochromatic.
Take $N_1>N_0$ large enough that there is a subtree $w(n+1)\sse
B(N_1)$ isomorphic to  $\mathbb{T}(n+1)$ such that
the collection of all subtrees of $w(n+1)$ which are isomorphic to $\mathbb{T}(k)$  is monochromatic.
In general, given $N_i$ and $w(n+i)$,
take $N_{i+1}>N_i$ large enough that there is a subtree $w(n+i+1)\sse B(N_{i+1})$ isomorphic to  $\bT(n+i+1)$ such that
the collection of all subtrees of $w(n+i+1)$ which are isomorphic to $\mathbb{T}(k)$
is monochromatic.
Now the colors on the subtrees of  $w(n+i)$
may be different for different $i$, so take a subsequence $(m_l)_{l<\om}$ of $(n+i)_{i<\om}$ such that
all the subtrees of
 $w(m_l)$
 isomorphic to $\mathbb{T}(k)$
 have the same color for all $l<\om$.
Then thin down, by taking any subtree $u(n+l)\sse w(m_l)$ isomorphic to  $\mathbb{T}(n+l)$, for each $l<\om$.
Finally, let $A=r_n(B)\cup\bigcup\{u(n+l):l<\om\}$.
Then $A\in[\depth_B(a),B]$ and  either $r_{k+1}[a,A]\sse\mathcal{O}$,
 or else
$r_{k+1}[a,A]\sse\mathcal{O}^c$.
\end{proof}

\begin{rem}\label{rem.Cor5.19}
Since for $\mathcal{R}_1$, the quasi-ordering $\le_{\mathrm{fin}}$ on $\mathcal{AR}$ is actually a partial ordering, it follows from Corollary 5.19 in \cite{TodorcevicBK10} that for any front $\mathcal{F}$ on $[0,X]$,  $X\in\mathcal{R}_1$, there is a $Y\le_1 X$ such that $\mathcal{F}|Y$ is a barrier.
\end{rem}


\section{Canonization theorems for $\mathcal{R}_1$}\label{sec.canonizationsR_1}

This section contains the canonization theorems  for equivalence relations on fronts on the topological Ramsey space $\mathcal{R}_1$.
Theorem \ref{thm.original}  generalizes the \Erdos-Rado Theorem  for barriers of the Ellentuck space the form $[\bN]^n$ to barriers of $\mathcal{R}_1$ of the form $\mathcal{AR}_n$ for $n<\om$.
Theorem \ref{thm.PRR(1)} is the main theorem of this section, which provides canonical forms for equivalence relations on general fronts on $\mathcal{R}_1$.
This yields the Pudlak-\Rodl\ Theorem for equivalence relations for barriers on the Ellentuck space.

Recall Definition \ref{def.frontR1} of front and barrier.
Given a front $\mathcal{F}$ on some $[\emptyset, A]$ and an $X\le_1 A$, recall $\mathcal{F}|X$ denotes the collection of all $t\in\mathcal{F}$ such that $t\le_{\mathrm{fin}} X$.
Note that $\mathcal{F}|X$ forms a front on $[\emptyset,X]$.
More generally, if $\mathcal{H}$ is any subset of $\mathcal{AR}$ and $X\in\mathcal{R}_1$, we write $\mathcal{H}|X$ to denote the collection of all $t\in\mathcal{H}$ such that $t\le_{\mathrm{fin}} X$.
Henceforth, we drop the subscript on $\le_1$ and just write $\le$.

We begin by setting  up  notation regarding equivalence relations.

\begin{defn}\label{defn.ET}
For each $n<\om$, let $\tilde{T}(n)$ denote the tree  $\{\lgl\rgl,\lgl 0\rgl, \lgl 0,i\rgl:i\le n\}$.
Let $T_{\lgl\rgl}=\{\lgl\rgl\}$ and
let $T_{\lgl 0\rgl}=\{\lgl\rgl,\lgl 0\rgl\}$.
For $\emptyset\ne I\sse n+1$,
let $T_I=\{\lgl\rgl,\lgl 0\rgl,\lgl 0,i\rgl:i\in I\}$.
Let $\mathcal{T}(n)$ denote  the collection of all (downwards closed) subtrees of $\tilde{T}(n)$ of any height.
Thus, $\mathcal{T}(n)$ consists of the trees $T_{\lgl\rgl}$, $T_{\lgl 0\rgl}$, and $T_I$ where $I$ is a nonempty subset of $n+1$.

Given a tree $T\in\mathcal{T}(n)$ and $X\in\mathcal{R}_1$,
let $\pi_{T}(X(n))$ denote
the $T$-projection of $X(n)$;
that is,
the subtree of $X(n)$ consisting of the nodes in those positions occurring in $T$.
Thus, if $X(n)=\{\lgl\rgl,\lgl k\rgl, \lgl k,l\rgl:l\in L\}$, where $L=\{l_0,\dots,l_n\}$,
then,
(i) $\pi_{T_{\lgl\rgl}}(X(n))=\{\lgl\rgl\}$,
(ii) $\pi_{T_{\lgl 0\rgl}}(X(n))=\{\lgl\rgl,\lgl k\rgl\}$,
and
(iii) for $\emptyset \ne I=\{i_0,\dots,i_m\}\sse n+1$, $\pi_{T_I}(X(n))=\{\lgl\rgl,\lgl k\rgl, \lgl k, l_{i_0}\rgl,\dots,\lgl k,l_{i_m}\rgl\}$.

Each $T\in\mathcal{T}(n)$ induces an equivalence relation $\E_{T}$ on $\mathcal{R}_1(n)$  in the following way:
\begin{equation}
X(n)\E_T Y(n) \Leftrightarrow
\pi_{T}(X(n))=\pi_{T}(Y(n)).
\end{equation}
Let $\mathcal{E}(n)$ denote the collection of equivalence relations $\E_T$, for $T\in\mathcal{T}(n)$.
\end{defn}

\begin{defn}\label{defn.canonicalARn}
Let $1\le n <\om$ be fixed.
An equivalence relation $\R$  on $\mathcal{AR}_n$ is {\em canonical} if and only if
there are trees $T(0)\in\mathcal{T}(0),\ \dots,\  T(n-1)\in\mathcal{T}(n-1)$ such that for all $a,b\in\mathcal{AR}_n$,
\begin{equation}
a\R b \ \Leftrightarrow \ \forall i<n\, (\pi_{T(i)}(a(i))=\pi_{T(i)}(b(i))).
\end{equation}
\end{defn}

We now are ready to state our first canonization theorem.
We remark that for each $n<\om$, $\mathcal{AR}_n$ is a barrier.

\begin{thm}\label{thm.original}
Let $1\le n<\om$.
Given any $A\in\mathcal{R}_1$ and any equivalence relation $\R$ on  $\mathcal{AR}_n|A$,
there is a $D\le A$ such that $\R$ is canonical on $\mathcal{AR}_n|D$.
\end{thm}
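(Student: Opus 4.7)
I would prove this by induction on $n$, with the Abstract Nash-Williams Theorem (Theorem \ref{thm.ANW}) serving as the main Ramsey engine, in close analogy to the inductive proof of the \Erdos-Rado canonization theorem.

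\textbf{Base case $n=1$.} Each element of $\mathcal{AR}_1|A$ is a tree $\{\lgl\rgl,\lgl k\rgl,\lgl k,i\rgl\}$, and the three members of $\mathcal{T}(0)$ correspond respectively to $\R$ being everywhere trivial, $\R$ agreeing with the ``same middle node'' relation, or $\R$ being the identity. I would partition the distinct pairs $\{a,b\}$ of elements of $\mathcal{AR}_1|A$ into Case A (same middle node, i.e.\ drawn as distinct leaves from a single $A(m)$) and Case B (different middle nodes, drawn from distinct $A(m), A(m')$). Coloring each class by the value of $\R$ and applying the Abstract Nash-Williams Theorem (together with the finite Ramsey theorem inside each $A(m)$ to handle Case A) yields $D\le A$ on which both colorings are constant. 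A short transitivity check rules out the combination ``Case A unrelated, Case B related'': two same-middle trees $a_1,a_2$ that were both $\R$-related to a third tree $b$ with a different middle would force $a_1\R a_2$. The remaining three possibilities match exactly the three trees in $\mathcal{T}(0)$.

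\textbf{Inductive step.} Assuming the result for $n-1$, and given $\R$ on $\mathcal{AR}_n|A$, I would first canonicalize how $\R$ depends on the last coordinate $a(n-1)$ while holding the prefix $r_{n-1}(a)=s$ fixed. Using the same case-split strategy as in the base case (now with $\binom{n}{2}$-many ``same-middle'' leaf-pair configurations available inside each $A(m)$ that hosts the $(n-1)$-th tree) together with Abstract Nash-Williams, I extract, for each prefix $s$, a tree $T_s(n-1)\in\mathcal{T}(n-1)$ governing the last-coordinate dependence on some refinement. Since $\mathcal{T}(n-1)$ is finite, viewing $s\mapsto T_s(n-1)$ as a finite coloring of prefixes in $\mathcal{AR}_{n-1}$ and applying Abstract Nash-Williams once more yields $D_1\le A$ on which $T_s(n-1)=T(n-1)$ is independent of $s$. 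I then descend to an equivalence $\R'$ on $\mathcal{AR}_{n-1}|D_1$ by declaring $s\R' s'$ iff the corresponding $T(n-1)$-projected extensions of $s$ and $s'$ match $\R$-classes coherently. The inductive hypothesis applied to $\R'$ produces $D\le D_1$ and trees $T(0)\in\mathcal{T}(0),\ldots,T(n-2)\in\mathcal{T}(n-2)$ canonicalizing $\R'$, and assembling these with $T(n-1)$ gives the desired canonical form for $\R$ on $\mathcal{AR}_n|D$.

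\textbf{Main obstacle.} The hard part will be guaranteeing the compatibility of the per-prefix last-coordinate canonicalization with a single global tree $T(n-1)$, along with the well-definedness of the induced prefix equivalence $\R'$. A priori, different prefixes $s$ could produce mutually incompatible matchings between $T(n-1)$-projection data on the last tree and $\R$-classes on $\mathcal{AR}_n$. Handling this requires coloring entire elements $a\in\mathcal{AR}_n|A$ (not just pairs inside fixed-prefix slices) by their fine combinatorial type under $\R$ and then repeatedly applying the Abstract Nash-Williams Theorem --- together with Remark \ref{rem.Cor5.19} (to pass from fronts to barriers when needed) and axiom \textbf{A.4} --- to achieve simultaneous homogenization across all prefix/last-coordinate interactions. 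This joint refinement is what forces a genuine product-like canonical form that exactly recovers $\R$ on $\mathcal{AR}_n|D$.
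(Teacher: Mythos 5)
Your route is genuinely different from the paper's. The paper does not prove Theorem \ref{thm.original} directly: it first establishes the general canonization theorem for arbitrary fronts (Theorem \ref{thm.PRR(1)}), applies it to the front $\mathcal{AR}_n$ to obtain $C\le A$ and an inner map $\vp(s)=\bigcup_{i<n}\pi_{T_{r_i(s)}}(s(i))$ in which the trees $T_{r_i(s)}$ may still depend on the initial segment $r_i(s)$, and then removes this dependence coordinate by coordinate by applying the Abstract Ellentuck Theorem to the finitely many open sets $\mathcal{X}_T=\{X\le C:T_{r_{i+1}(X)}=T\}$, $T\in\mathcal{T}(i+1)$. Your direct induction on $n$ is the proof the authors explicitly say is possible but omit for length; your base case is correct (after the transitivity check, the surviving homogeneous outcomes of the Case A/Case B analysis are exactly the three members of $\mathcal{T}(0)$), and the prefix/last-coordinate decomposition in the inductive step is the right skeleton, paralleling the classical inductive proof of the Erd\H{o}s--Rado theorem. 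The trade-off is clear: the paper's route concentrates all coherence difficulties in one general theorem and makes Theorem \ref{thm.original} a two-line corollary plus a finite homogenization, while yours would give a self-contained finite-rank argument avoiding fronts of infinite rank.

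That said, the step you flag as the ``main obstacle'' is the actual mathematical content of the theorem, and your proposed resolution does not work as stated. An equivalence relation $\R$ with infinitely many classes does not assign a finite ``combinatorial type'' to a single element $a\in\mathcal{AR}_n$, so you cannot homogenize the cross-prefix behaviour by colouring elements of the front and invoking the Abstract Nash-Williams Theorem; and pairs $(a,b)$ of elements of $\mathcal{AR}_n$ do not themselves form a Nash-Williams family, so Theorem \ref{thm.ANW} does not apply to them directly either. What is actually needed is (i) that the relation ``some extensions of $s$ and of $s'$ are $\R$-related'' is transitive on prefixes (otherwise your $\R'$ is not an equivalence relation), and (ii) that for prefixes so related, $s\cup u\,\R\,s'\cup v$ holds exactly when $\pi_{T(n-1)}(u)=\pi_{T(n-1)}(v)$, which in particular forces the per-prefix trees $T_s(n-1)$ and $T_{s'}(n-1)$ to be isomorphic \emph{before} any global homogenization over $s$. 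In the paper these are precisely Lemma \ref{lem.mixing} (transitivity of mixing, proved by applying the Abstract Nash-Williams Theorem to an auxiliary front) and Claims \ref{claim.E} and \ref{claim.Tsametype} (proved by applying the Abstract Ellentuck Theorem to open families such as $\mathcal{X}_{S,T}$, combined with the fusion Lemma \ref{lem.1}). Until you supply arguments of this kind, the induced relation $\R'$ is not well defined and the product factorization of $\R$ does not follow; with them supplied, your induction goes through and recovers the special case of the paper's machinery.
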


\begin{rem}\label{rem.numbers}
For each $1\le n<\om$, there are $\Pi_{i=1}^n(2^i+1)$ canonical equivalence relations on $\mathcal{AR}_n$.
Each  $i$-th component of the product is exactly the number of  \Erdos-Rado canonical equivalence relations on $[\bN]^i$ plus one.
\end{rem}

Though  Theorem \ref{thm.original} can be proved directly, in order to avoid unnecessary length in this paper, we shall prove it at the end of this section by a short  application of  Theorem \ref{thm.PRR(1)}.
We begin  with some general facts and lemmas which provide tools for the proof of the main theorem of this section.  In what follows, $X/(s,t)$ denotes $X/s\cap X/t$.

\begin{fact}\label{fact.B(n)}
Suppose $n<\om$, $a\in\mathcal{AR}_n$, and $B\in\mathcal{R}_1$ such that $B(n)\sse\mathbb{T}(k')$ and $a(n-1)\sse\mathbb{T}(k)$ for some $k<k'$.
Then $a\cup (B/r_n(B))$ is a member of $\mathcal{R}_1$.
\end{fact}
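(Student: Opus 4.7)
The plan is to verify the two structural conditions in Definition \ref{defn.R_1} directly. First I would unpack what $B/r_n(B)$ is: since $r_n(B) = \bigcup_{j<n} B(j)$, and the least $m$ with $\depth_{\bT}(r_m(B)) \ge \depth_{\bT}(r_n(B))$ is $m=n$, we have $B/r_n(B) = \bigcup_{j\ge n} B(j)$. Let $C := a \cup (B/r_n(B))$. Since $a$ is already downward closed (being in $\mathcal{AR}_n$) and each $B(j)$ is downward closed in $\bT$, their union is a subtree of $\bT$.

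Next I would enumerate the $\bT$-levels occupied by the blocks. Because $a \in \mathcal{AR}_n$, there is some $A \in \mathcal{R}_1$ with $a = r_n(A)$, so $a(0), a(1), \dots, a(n-1)$ sit in a strictly increasing sequence of levels $k^a_0 < k^a_1 < \dots < k^a_{n-1}$, with each $a(i) \cong \bT(i)$. By hypothesis, $k^a_{n-1} = k$. Similarly, because $B \in \mathcal{R}_1$, the blocks $B(n), B(n+1), \dots$ lie in strictly increasing levels $k^B_n < k^B_{n+1} < \dots$ with each $B(j) \cong \bT(j)$, and by hypothesis $k^B_n = k'$. The crucial use of the assumption $k < k'$ is that it concatenates these two sequences into one strictly increasing sequence
\[
k^a_0 < k^a_1 < \dots < k^a_{n-1} = k < k' = k^B_n < k^B_{n+1} < \dots,
\]
so no two distinct blocks of $C$ share a $\bT$-level.

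Finally, define $C(i) := a(i)$ for $i < n$ and $C(j) := B(j)$ for $j \ge n$. Then $C(m) \cong \bT(m)$ for every $m<\om$, and $C \cap \bT(j) \ne \emptyset$ exactly when $j$ appears in the strictly increasing sequence above. Both clauses (1) and (2) of Definition \ref{defn.R_1} are therefore satisfied, giving $C \in \mathcal{R}_1$.

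There is no serious obstacle here; the only point to watch is the role of the hypothesis $k < k'$, without which the last level of $a$ could coincide with or exceed the first level of $B/r_n(B)$, causing two distinct blocks of $C$ to land in the same $\bT(j)$ and violating condition (2). The conclusion is essentially a bookkeeping check that the gap between $\depth_{\bT}(a(n-1))$ and $\depth_{\bT}(B(n))$ is strictly positive.
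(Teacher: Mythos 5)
Your proof is correct, and it is the natural argument: the paper states this as a Fact without supplying a proof, evidently regarding it as a routine unpacking of Definition \ref{defn.R_1}, which is exactly what you have done. Your identification of $B/r_n(B)$ with $\bigcup_{j\ge n}B(j)$, the concatenation of the two strictly increasing level-sequences using $k<k'$, and the block-by-block verification of conditions (1) and (2) are all sound; your closing remark correctly pinpoints $k<k'$ as the only hypothesis doing real work.
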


\begin{lem}\label{lem.1}
(1)
Suppose $P(\cdot,\cdot)$ is a property  such that for each $s\in\mathcal{AR}$ and each $X\in\mathcal{R}_1$, there is a $Z\le X$ such that $P(s,Z)$ holds.
Then for each $X\in\mathcal{R}_1$,
there is a $Y\le X$ such that for each $s\in\mathcal{AR}|Y$ and each $Z\le Y$, $P(s,Z/s)$ holds.

(2)
Suppose $P(\cdot,\cdot,\cdot)$ is a property such that for all $s,t\in\mathcal{AR}$ and each $X\in\mathcal{R}_1$, there is a $Z\le X$ such that $P(s,t,Z)$ holds.
Then for each $X\in\mathcal{R}_1$, there is a $Y\le X$ such that for all $s,t\in\mathcal{AR}|Y$ and all $Z\le Y$, $P(s,t,Z/(s,t))$ holds.
\end{lem}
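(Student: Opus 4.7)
The plan is a fusion argument. I build a $\le$-decreasing sequence $X=X_0\ge X_1\ge\cdots$ in $\mathcal{R}_1$ with $r_n(X_n)=r_n(X_{n+1})$ for every $n$, so that the diagonal union $Y=\bigcup_{n<\omega}r_n(X_n)$ is a well-defined element of $\mathcal{R}_1$ sitting below every $X_n$. That $Y$ actually lands in the space is exactly the closure observation used in verifying that $(\mathcal{R}_1,\le,r)$ is closed in $\mathcal{AR}^{\bN}$: any sequence $\langle a_n\rangle$ with $a_n\in\mathcal{AR}_n$ and $r_n(a_k)=a_n$ for all $k\ge n$ unions to a member of $\mathcal{R}_1$.

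For part (1), at stage $n$ I enumerate the finite set $S_n=\{s\in\mathcal{AR}|X_n:\depth_{X_n}(s)\le n\}$ as $s_0,\ldots,s_{k-1}$. Starting from $Y^{(0)}=X_n$, at substep $j$ apply the hypothesis to obtain $W\le Y^{(j)}$ with $P(s_j,W)$, then use axiom \textbf{A.3}(2) to produce $Y^{(j+1)}\in[r_n(X_n),Y^{(j)}]$ whose tail agrees with $W$ past the relevant finite level, so that in particular $Y^{(j+1)}/s_j\le W/s_j$. Set $X_{n+1}=Y^{(k)}$. Then $r_n(X_{n+1})=r_n(X_n)$, and $P(s,X_{n+1}/s)$ holds for every $s\in S_n$, since $X_{n+1}/s$ is a subtree of the corresponding witnessing $W/s$.

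Now given any $s\in\mathcal{AR}|Y$, choose $n$ large enough that $s\in S_n$ and $r_n(Y)=r_n(X_{n+1})$. Then $Y/s\le X_{n+1}/s$, so any $Z\le Y$ satisfies $Z/s\le Y/s\le X_{n+1}/s$, which yields $P(s,Z/s)$ by the (tacit) monotonicity in the second argument that is standard for fusion-type statements in topological Ramsey spaces. Part (2) follows the same template: at stage $n$, enumerate the finitely many pairs $(s,t)\in(\mathcal{AR}|X_n)^2$ with $\depth_{X_n}(s)\le n$ and $\depth_{X_n}(t)\le n$, handle each pair in turn to secure $P(s,t,X_{n+1}/(s,t))$, and then argue identically that $Z/(s,t)\le X_{n+1}/(s,t)$ for $Z\le Y$.

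The main obstacle is the bookkeeping that preserves the initial segment $r_n(X_n)$ while shrinking the tail finitely many times at each stage. This is exactly what axiom \textbf{A.3}(2) provides, and the explicit description of $\mathcal{R}_1$ in Definition \ref{defn.R_1}, where below a fixed initial segment one can freely replace each finite tree $X_n(k)$ for $k>n$ by any subtree of the appropriate isomorphism type, confirms that the required shrinkings always exist. Beyond the bookkeeping, the argument is routine.
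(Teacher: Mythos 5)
Your proposal is correct and is essentially the paper's own argument: a fusion construction that at stage $n$ freezes $r_n(X_n)$, handles the finitely many approximations of depth at most $n$ by finitely many applications of the hypothesis, and takes the diagonal union. The paper is equally silent-but-reliant on the downward heredity of $P$ in its last argument (every application in the paper is to mixing/separating/deciding properties, which are hereditary), so your explicit flagging of that tacit monotonicity matches, rather than departs from, the intended reading of the lemma.
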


\begin{proof}
The proofs are by straightforward fusion arguments.
 Let $X$ be given.  By the hypothesis, there is an $X_1\le X$ for which $P(\emptyset,X_1)$ holds.  Fix $y_1=r_1(X_1)$.
For $n\ge 1$, given $X_n$ and $y_n$, enumerate $\mathcal{AR}|y_n$ as $s_i$, $i<|\mathcal{AR}|y_n|$.
Applying the hypothesis finitely many times, we obtain an $X_{n+1}\le X_n$ such that $P(s_i,X_{n+1}/s_i)$ holds for all $i$. Let $y_{n+1}=y_n\cup X_{n+1}(n)$.
Continuing in this manner, we obtain  $Y=\bigcup_{n\ge 1}y_n$ which satisfies (1).

 Let $X$ be given.
Fix $s=r_0(X)=\emptyset$ and $t=r_1(X)$, and let
$y_1 =r_1(X)$.
By the hypothesis, there is an $X_2\le X$
such that $P(s,t,X_2)$.
Let $y_2=y_1\cup X_2(1)$.
Let $n\ge 2$  be given, and suppose  $X_n$ and $y_n$ have been constructed.
Enumerate the
 pairs of distinct elements  $s,t\in \mathcal{AR}|y_n$
as $(s_i,t_i)$, for all $i<|[\mathcal{AR}|y_n]^2|$.
By finitely many applications of the hypothesis, we obtain an $X_{n+1}\le X_{n}$ such that for each $i$,
$P(s_i,t_i,X_n)$ holds.
Let $y_{n+1}=y_n\cup X_{n+1}(n)$.
In this way we obtain $Y=\bigcup_{n\ge 1}  y_n$ which satisfies (2).
\end{proof}

Given a front $\mathcal{F}$ on $[\emptyset,A]$ for some $A\in\mathcal{R}_1$  and $f:\mathcal{F}\ra\bN$, we adhere to the following convention:
If we write $f(b)$ or $f(s\cup u)$,
it is assumed that $b,s\cup u$ are in $\mathcal{F}$.
Define
\begin{equation}
\hat{\mathcal{F}}=\{r_m(b): b\in\mathcal{F},\ m\le n<\om, \mathrm{\ where\ } b\in\mathcal{AR}_n\}.
\end{equation}
Note that $\emptyset\in\hat{\mathcal{F}}$, since $\emptyset=r_0(b)$ for any $b\in\mathcal{F}$.
For any $X\le A$,
define
\begin{equation}
\Ext(X)=\{s\setminus r_m(s):
m<\om,\
\exists n\ge m\, (s\in \mathcal{AR}_n,\mathrm{\ and\ } s\setminus r_m(s)\sse X)\}.
\end{equation}
$\Ext(X)$ is the collection of all possible legal extensions into $X$.
For any $s\in\mathcal{AR}$,  let $\Ext(X/s)$ denote the collection of those $y\in \Ext(X)$ such that
$y\sse X/s$.
For $u\in\mathcal{AR}$,
we write $v\in\Ext(u)$
to mean that
$v\in\Ext(\mathbb{T})$ and $v\sse u$.

The next notions of separating and mixing have their roots in the paper \cite{Proml/Voigt85}, where 
Pr\"{o}ml and Voigt canonized Borel mappings from $[\om]^{\om}$ into the real numbers.
We introduce  notions of separating and mixing for our context.

\begin{defn}\label{def.sepmix}
Fix $s,t\in\hat{\mathcal{F}}$ and $X\in\mathcal{R}_1$.
 $X$ {\em separates $s$ and $t$} if and only
for all  $x\in\Ext(X/s)$ and $y\in\Ext(X/t)$ such that $s\cup x$ and $t\cup y$ are in $\mathcal{F}$,
$f(s\cup x)\ne f(t\cup y)$.
 $X$ {\em mixes $s$ and $t$} if and only if there is no $Y\le X$ which separates $s$ and $t$.
$X$ {\em decides for $s$ and $t$} if and only if either $X$ separates $s$ and $t$ or else $X$ mixes $s$ and $t$.
\end{defn}

Thus, $X$ {\em mixes $s$ and $t$}
 if and only if for each $Y\le X$, there are $x,y\in\Ext(Y)$ such that $f(s\cup x)=f(t\cup y)$.
Note  that if $X$ mixes $s$ and $t$,
then for all $Y\le X$,
$Y$ mixes $s$ and $t$.
Likewise, if $X$ separates $s$ and $t$,
then for all $Y\le X$,
$Y$ separates $s$ and $t$.

The following modifications of the previous definitions will be  used in essential  ways in the proof of the main theorem of this section.

\begin{defn}\label{def.decide}
Fix $s,t\in\hat{\mathcal{F}}$ and $X\in\mathcal{R}_1$.
Let $\Ext(X/(s,t))$  denote $\Ext(X/s)\cap\Ext(X/t)$.
 $X/(s,t)$ {\em separates $s$ and $t$} if and only
for all $x,y\in\Ext(X/(s,t))$ such that  $s\cup x$ and $t\cup y$ are in $\mathcal{F}$,
$f(s\cup x)\ne f(t\cup y)$.
 $X/(s,t)$ {\em mixes $s$ and $t$} if and only if there is no $Y\le X/(s,t)$ which separates $s$ and $t$.
We say that $X/(s,t)$ {\em decides  for $s$ and $t$} if and only if either $X/(s,t)$ separates $s$ and $t$;
or else $X/(s,t)$ mixes $s$ and $t$.
Thus,
 $X/(s,t)$ {\em decides for $s$ and $t$} if and only if either for all  $x,y\in\Ext(X/(s,t))$,
$f(s\cup x)\ne f(t\cup y)$,
or else there is no $Y\le X/(s,t)$ which has this property.
\end{defn}

We point out that $X/(s,t)$ mixes $s$ and $t$
if and only if $X$ mixes $s$ and $t$.
However,
if  $X/(s,t)$ separates $s$ and $t$ it does not necessarily follow that $X$ separates $s$ and $t$.

\begin{lem}[Transitivity of Mixing]\label{lem.mixing}
For any $X\in\mathcal{R}_1$ and any $s,t,u\in\hat{\mathcal{F}}$,
if $X$ mixes $s$ and $t$ and $X$ mixes $t$ and $u$,
then $X$ mixes $s$ and $u$.
\end{lem}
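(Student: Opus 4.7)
The plan is to argue by contradiction and reduce the problem to a single application of the Abstract Nash-Williams Theorem. Suppose $X$ mixes $s$ and $t$ and mixes $t$ and $u$, yet $X$ does not mix $s$ and $u$. Then there exists $Y \le X$ that separates $s$ and $u$. Since separation is inherited by refinements in the trivial direction and, as noted after Definition~\ref{def.sepmix}, mixing is preserved under refinement, $Y$ continues to mix $s,t$ and $t,u$, while also separating $s,u$. The goal is to exhibit $x\in\Ext(Y/s)$ and $z\in\Ext(Y/u)$ with $f(s\cup x)=f(u\cup z)$, which will contradict separation of $s,u$ by $Y$.

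The crucial step is to extract from the mixing of $s$ and $t$ by $Y$ a dense set of ``linked'' extensions of $t$. To this end, consider the family
\[
\mathcal{F}_t := \{b \in \mathcal{F}|Y : t \sqsubseteq b\},
\]
which, being a subfamily of the front $\mathcal{F}$, is Nash-Williams. Define a $2$-coloring $c : \mathcal{F}_t \to \{0,1\}$ by setting $c(b)=1$ iff there exists $x\in\Ext(Y/s)$ with $s\cup x\in\mathcal{F}$ and $f(s\cup x)=f(b)$, and $c(b)=0$ otherwise. Applying the Abstract Nash-Williams Theorem~\ref{thm.ANW} to the partition $\mathcal{F}_t = c^{-1}(0)\cup c^{-1}(1)$ yields some $Z \le Y$ on which $c$ is constant on $\mathcal{F}|Z \cap \{b : t\sqsubseteq b\}$.

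I then split into cases. If $c \equiv 0$ on the refinement, then for every $y \in \Ext(Z/t)$ with $t\cup y\in\mathcal{F}$ and every $x\in\Ext(Z/s)\subseteq\Ext(Y/s)$ we must have $f(s\cup x)\neq f(t\cup y)$; but this says $Z$ separates $s$ and $t$, contradicting the fact that $Z\le Y$ still mixes $s,t$. Therefore $c\equiv 1$ on the refinement. Now invoke the mixing of $t$ and $u$ by $Z$: there exist $y\in\Ext(Z/t)$ and $z\in\Ext(Z/u)$ with $t\cup y, u\cup z\in\mathcal{F}$ and $f(t\cup y)=f(u\cup z)$. Since $c(t\cup y)=1$, pick the corresponding $x\in\Ext(Y/s)$ with $s\cup x\in\mathcal{F}$ and $f(s\cup x)=f(t\cup y)$. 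Chaining the equalities gives $f(s\cup x)=f(u\cup z)$ with $x\in\Ext(Y/s)$ and $z\in\Ext(Z/u)\subseteq\Ext(Y/u)$, contradicting separation of $s$ and $u$ by $Y$.

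The main obstacle is a conceptual rather than technical one: one has to resist the temptation to demand the $x$-witness in the coloring $c$ live in $\Ext(Z/s)$ (which would make the case $c\equiv 1$ immediately trivial but would prevent the refinement from producing a uniform statement over $\mathcal{F}_t$). The correct calibration is to let the witness range over $\Ext(Y/s)$ in the definition of $c$; this is precisely strong enough to give a contradiction with $Y$ separating $s,u$ in the final step, and precisely weak enough to preserve the Nash-Williams hypothesis needed to apply Theorem~\ref{thm.ANW}.
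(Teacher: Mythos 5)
Your proposal is correct and follows essentially the same route as the paper: assume a separating $Y\le X$ for $s,u$, apply the Abstract Nash--Williams Theorem (relativized above $t$) to the family of extensions of $t$ that are $f$-linked to some extension of $s$ into $Y$, and derive a contradiction with the mixing of $s,t$ in the homogeneous-$0$ case and with the mixing of $t,u$ (together with the separation of $s,u$) in the homogeneous-$1$ case. Your closing remark about letting the witness range over $\Ext(Y/s)$ rather than $\Ext(Z/s)$ matches the paper's choice of fixing $\mathcal{F}_s|Y_s$ before thinning to $Z$, so the calibration is exactly the paper's.
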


\begin{proof}
Suppose to the contrary that $X$ does not mix $s$ and $u$.
Then there is a $Y\le X$ such that $Y$ separates $s$ and $u$.
Let $k=|s|$,
$l=|t|$, and $m=|u|$.
Shrinking $Y$ if necessary, we may assume that
$\depth_{\bT}(Y(1))>\max(\depth_{\bT}(s),\depth_{\bT}(t))$.
Let $Y_s=s\cup (Y\setminus r_k(Y))$ and $Y_t=t\cup(Y\setminus r_l(Y))$.
Then $Y_s$ and $Y_t$ are both members of $\mathcal{R}_1$.
Let
\begin{equation}
\mathcal{G}=\{v\in\mathcal{F}_t|Y_t:
\exists w\in\mathcal{F}_s|Y_s\, (f(v)=f(w))\}.
\end{equation}
By the Abstract Nash-Williams Theorem relativized to $\mathcal{F}_t$,
there is a $Z\in[t,Y_t]$ such that either $\mathcal{F}_t|Z\sse \mathcal{G}$ or else
$\mathcal{F}_t|Z \cap\mathcal{G}=\emptyset$.

Suppose $\mathcal{F}_t|Z\sse\mathcal{G}$.
Then for each  $v\in\mathcal{F}_t|Z$,
there is a
$w\in\mathcal{F}_s|Y_s$ such that
 $f(v)=f(w)$.
Since $Y$ separates $s$ and $u$,
for each $y\in\Ext(Z/u)$ such that $u\cup y\in\mathcal{F}$,
we have that $f(w)\ne f(u\cup y)$.
Therefore,
$f(u\cup y)\ne f(v)$.
Hence, $Z$ separates $t$ and $u$, contradicting our assumption.

Suppose $\mathcal{F}_t|Z\cap\mathcal{G}=\emptyset$.
Then for each $v\in \mathcal{F}_t|Z$,
for each $w\in\mathcal{F}_s|Y_s$,
$f(v)\ne f(w)$.
Thus, $Z$ separates $s$ and $t$,
contradicting our assumption.
Therefore, $X$ must mix $s$ and $u$.
\end{proof}

Thus, the mixing relation is an equivalence relation, since mixing is trivially reflexive and symmetric.

\begin{lem}\label{claim.A}
For each $X\in\mathcal{R}_1$,
there is a $Y\le X$ such that for each $s,t\le_{\mathrm{fin}} Y$ in $\hat{\mathcal{F}}$,
$Y/(s,t)$ decides for $s$ and $t$.
\end{lem}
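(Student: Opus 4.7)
The plan is a two-variable fusion argument via Lemma \ref{lem.1}(2) applied to the property
\[
P(s,t,Z)\;\equiv\;\text{``}Z/(s,t)\ \text{decides for}\ s\ \text{and}\ t\text{''}.
\]

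First I would verify the single-pair hypothesis: for each $s,t\in\hat{\mathcal{F}}$ and each $X\in\mathcal{R}_1$, there is a $Z\le X$ with $Z/(s,t)$ deciding for $s$ and $t$. This splits into two immediate cases. If $X/(s,t)$ mixes $s$ and $t$, simply set $Z=X$; then $Z/(s,t)=X/(s,t)$ mixes, and hence decides. Otherwise, by the definition of mixing for $X/(s,t)$, there exists some $Z\le X/(s,t)$ that separates $s$ and $t$ (in particular $Z\le X$). Since $\Ext(Z/(s,t))=\Ext(Z/s)\cap\Ext(Z/t)$ is contained in both $\Ext(Z/s)$ and $\Ext(Z/t)$, separation of $s$ and $t$ by $Z$ directly yields separation by $Z/(s,t)$. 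Thus $Z/(s,t)$ decides, as required.

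With the single-pair hypothesis in hand, I would apply Lemma \ref{lem.1}(2), enumerating pairs $(s,t)$ from $\hat{\mathcal{F}}|y_n$ rather than $\mathcal{AR}|y_n$; this set is finite at each stage, so the fusion construction of that lemma carries over verbatim. The outcome is a $Y\le X$ such that for all $s,t\in\hat{\mathcal{F}}|Y$ and all $Z\le Y$, the property $P(s,t,Z/(s,t))$ holds. Specializing to $Z=Y$ and using $(Y/(s,t))/(s,t)=Y/(s,t)$ gives exactly the conclusion of the lemma.

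The only substantive point is the reduction in the first step: recognizing that the dichotomy ``$X/(s,t)$ mixes or there is some $Z\le X/(s,t)$ separating'' combined with the inclusion $\Ext(Z/(s,t))\sse\Ext(Z/s),\Ext(Z/t)$ always locates a deciding $Z$. Once this is observed, the lemma is an essentially formal consequence of Lemma \ref{lem.1}(2), and no further combinatorial obstacle arises.
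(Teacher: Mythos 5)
Your proposal is correct, and its skeleton --- establish the single-pair statement ``for every $X$ there is $Z\le X$ deciding for $s$ and $t$,'' then fuse via Lemma \ref{lem.1}(2) --- is exactly the paper's. Where you diverge is in how the single-pair dichotomy is obtained. The paper applies the Abstract Ellentuck Theorem to the open set $\mathcal{X}_{s,t}=\{Y\le X:\exists v,w\in\Ext(Y)\,(f(s\cup v)=f(t\cup w))\}$, obtaining a $Y$ with $[\emptyset,Y]\sse\mathcal{X}_{s,t}$ (so $Y$ mixes) or $[\emptyset,Y]\cap\mathcal{X}_{s,t}=\emptyset$ (so $Y$ separates). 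You instead observe that the existence of a deciding restriction is a purely logical consequence of the definitions: since ``mixes'' means ``no restriction separates,'' either $X/(s,t)$ mixes (hence decides) or some $Z\le X/(s,t)$ separates (hence decides), with the passage from unrestricted to restricted separation handled by the inclusion $\Ext(Z/(s,t))\sse\Ext(Z/s),\Ext(Z/t)$. Your observation is valid and strictly more elementary for this lemma; what the paper's AET argument buys in addition is the concrete homogeneity characterization of mixing (``every $Z\le Y$ contains $v,w$ with $f(s\cup v)=f(t\cup w)$'') realized on a single definable open set, which is the working form of mixing used repeatedly in the later claims. Two minor points: your re-enumeration over $\hat{\mathcal{F}}|y_n$ is fine but unnecessary, since the paper's conditional formulation of $P$ (vacuously true off $\hat{\mathcal{F}}$) lets Lemma \ref{lem.1}(2) be applied as a black box; and, like the paper, you are implicitly using that both separating and mixing are inherited by $\le$-smaller elements so that the fusion's diagonal limit retains the decisions made at each finite stage --- worth stating, though the paper records it only in the remarks following Definition \ref{def.sepmix}.
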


\begin{proof}
For $s,t\in\mathcal{AR}$ and $Y\in\mathcal{R}_1$,
let $P(s,t,Y)$ be the following property:
If $s,t\in\hat{\mathcal{F}}$,
then $Y/(s,t)$ decides for $s$ and $t$.
We will show that for each $s,t\in\hat{\mathcal{F}}$ and each $X\in\mathcal{R}_1$,
there is a $Y\le X$ which decides for $s$ and $t$.
The claim will then follow from Lemma
\ref{lem.1} (2).

Fix $X\in\mathcal{R}_1$ and $s,t\in\hat{\mathcal{F}}$.
Let
\begin{equation}
\mathcal{X}_{s,t}=\{Y\le X:\exists v,w\in\Ext(Y)\,
(f(s\cup v)=f(t\cup w))\}.
\end{equation}
Since  $\mathcal{X}_{s,t}$ is open,
by the Abstract Ellentuck Theorem
there is a $Y\le X$ such that either
$[\emptyset, Y]\sse\mathcal{X}_{s,t}$
or else $[\emptyset,Y]\cap\mathcal{X}_{s,t}=\emptyset$.
If $[\emptyset, Y]\sse\mathcal{X}_{s,t}$, then
for each $Z\le Y$, there are $v,w\in\Ext(Z)$
 such that
$f(s\cup v)=f(t\cup w)$.
Hence, $Y$ mixes $s$ and $t$.
Suppose now that $[\emptyset,Y]\cap\mathcal{X}_{s,t}=\emptyset$.
For each $v,w\in\Ext(Y)$ such that
$s\cup v,t\cup w\in\mathcal{F}$,
 $f(s\cup v)\ne f(t\cup w)$.
Thus, $Y$ separates $s$ and $t$.
In both cases, $Y$ decides for $s$ and $t$.
\end{proof}

\begin{defn}\label{defn.irred}
Let $\mathcal{F}$ be a front on $[\emptyset, X]$ for some $X\in\mathcal{R}_1$, and let $\vp$ be a function on $\mathcal{F}$.
\begin{enumerate}
\item
$\vp$ is {\em inner} if $\vp(a)$ is a subtree of $a$, for all $a\in\mathcal{F}$.
\item
$\vp$ is {\em Nash-Williams} if $\vp(a)\not\sqsubseteq \vp(b)$, for all $a\ne b\in\mathcal{F}$.
\item
$\vp$ is {\em Sperner} if $\vp(a)\not\sse \vp(b)$ for all $a\ne b\in\mathcal{F}$
\end{enumerate}
\end{defn}

\begin{defn}\label{defn.canonical}
Let $X\in\mathcal{R}_1$, $\mathcal{F}$ be a front on $[\emptyset,X]$, and  $\R$ an equivalence relation on $\mathcal{F}$.
We say that $\R$ is {\em canonical} if and only if there is an
 inner  Sperner  function
$\vp$ on $\mathcal{F}$ such that
\begin{enumerate}
\item
for all $a,b\in\mathcal{F}$, $a\R b$ if and only if $\vp(a)=\vp(b)$; and
\item
$\vp$ is maximal among all inner Sperner functions satisfying (1).
That is,
for any other inner Sperner function $\vp'$ on $\mathcal{F}$ satisfying (1), there is a $Y\le X$ such that $\vp'(a)\sse\vp(a)$ for all $a\in\mathcal{F}|Y$.
\end{enumerate}
\end{defn}

\begin{rem}\label{rem.canonical}
The map $\vp$ constructed in the
 proof of Theorem \ref{thm.PRR(1)}  is  the only such inner Sperner map  with the additional property $(*)$ that there is a $Z\le C$ such that for each $s\in\mathcal{F}|Z$ there is a $t\in\mathcal{F}$ such that $\vp(s)=\vp(t)=s\cap t$.
This will be discussed after the proof of the following main canonization theorem.
\end{rem}

Recall that by Remark \ref{rem.Cor5.19}, for each front $\mathcal{F}$ on some $[0,A]$, there is an $A'\le A$ such that $\mathcal{F}|A'$ is a barrier.
Hence, we obtain a slightly stronger result by  proving the following main theorem for fronts.

\begin{thm}\label{thm.PRR(1)}
Suppose $A\in\mathcal{R}_1$,  $\mathcal{F}$ is  a front on $[\emptyset,A],$ and  $\R$ is an equivalence relation on $\mathcal{F}$.
Then
 there is a $C\le A$ such that $\R$ is canonical on
$\mathcal{F}| C$.
\end{thm}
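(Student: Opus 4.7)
The plan is to follow the architecture of the Pudlak-\Rodl\ proof, with the mixing/separating dichotomy of Definitions \ref{def.sepmix} and \ref{def.decide} replacing the classical initial-segment invariance. Fix $f:\mathcal{F}\ra\bN$ with $a\,\R\,b\Llra f(a)=f(b)$, so that all mixing/separating terminology of this section applies to $\R$ via $f$.

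First I would apply Lemma \ref{claim.A} to obtain $B_0\le A$ such that $B_0/(s,t)$ decides for every pair $s,t\in\hat{\mathcal{F}}|B_0$; by Lemma \ref{lem.mixing}, the mixing relation is then a bona fide equivalence relation on $\hat{\mathcal{F}}|B_0$. Next, by repeated applications of Lemma \ref{lem.1}(2) to properties $P(s,t,Y)$ expressing that the mixing class of $s$ is not altered by swapping individual nodes inside $s$ with admissible alternatives coming from $Y$, I would produce $B_1\le B_0$ on which the mixing class of each $s\in\hat{\mathcal{F}}|B_1$ is \emph{locally stable} in this sense.

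On $B_1$, define $\vp(s)$ for $s\in\mathcal{F}|B_1$ as follows. Write $s=s(0)\cup\dots\cup s(|s|-1)$ with $s(i)\in\mathcal{AR}_1$. A node $\nu$ of $s(i)$ is called \emph{essential} if replacing $\nu$ by another admissible node of the same depth changes the mixing class of $s$. Let $\vp(s)$ be the downward closure of the essential nodes of $s$, so that $\vp(s)$ is a subtree of $s$ whose $i$-th component is a projection of $s(i)$ of the form described in Definition \ref{defn.ET}, matching the canonical forms enumerated in Theorem \ref{thm.original}. By construction $\vp$ is inner, and a final application of Lemma \ref{lem.1} produces $C\le B_1$ on which $a\,\R\,b\Llra\vp(a)=\vp(b)$: the forward direction uses that non-essential nodes are absorbed by mixing while Lemma \ref{lem.mixing} chains local swaps into a global identification of $f$-values, and the reverse uses that essential nodes must be preserved by any $\R$-equivalence.

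The main obstacle will be establishing the Sperner property for $\vp$ on $\mathcal{F}|C$: if $\vp(a)\sse\vp(b)$ with $a\ne b$, the argument must fabricate $a',b'\in\mathcal{F}|C$ with $\vp(a')=\vp(a)$ and $\vp(b')=\vp(b)$ but $f(a')\ne f(b')$, yielding a contradiction with the characterization of $\R$ by $\vp$. The delicacy here comes from the two-level structure of $\bT$: the root $\lgl k\rgl$ of $s(i)$ and its leaves $\lgl k,j\rgl$ play asymmetric roles, since projecting out the root collapses the entire subtree, as reflected in the enumeration of $\mathcal{T}(n)$. A case analysis based on whether essential information sits at depth $1$ or depth $2$ within each $s(i)$ will therefore be required. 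The maximality clause of Definition \ref{defn.canonical} is then immediate from the construction, since any inner Sperner map $\vp'$ canonizing $\R$ must record every essential node, and hence $\vp'(a)\sse\vp(a)$ on a further refinement; the additional property $(*)$ mentioned in Remark \ref{rem.canonical} is likewise built into the definition, as each $\vp(s)$ is witnessed by a neighbor $t\in\mathcal{F}$ sharing the same essential nodes.
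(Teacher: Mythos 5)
Your overall architecture (decide every pair via Lemma \ref{claim.A}, use transitivity of mixing, define an inner map recording what the mixing relation remembers, then verify the Sperner and maximality conditions) is faithful to the paper's strategy, but the definition of $\vp$ via ``essential nodes'' has a genuine gap that the paper's Claim~\ref{claim.E} is designed to close and your proposal does not.

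The key missing idea is a canonization step for the mixing relation on \emph{immediate extensions}. In the paper, for each $s\in(\hat{\mathcal{F}}\setminus\mathcal{F})|B$ one proves (using the Abstract Ellentuck Theorem together with the finite \Erdos-Rado\ theorem) that there is a single tree $T_s\in\mathcal{T}(|s|)$ so that $B$ mixes $s\cup u$ and $s\cup v$ exactly when $\pi_{T_s}(u)=\pi_{T_s}(v)$. This is what makes $\vp_{r_i(s)}(s(i))=\pi_{T_{r_i(s)}}(s(i))$ well-defined, position-based, and inner, and it is the engine behind the later Claims~\ref{claim.Tsametype}--\ref{claim.L}. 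Your ``essential node'' definition is an attempt to recover the same object, but nothing in your sketch forces the set of essential positions of $s(i)$ to depend only on $r_i(s)$ rather than on the actual nodes of $s$, nor that it is the leaf-set of a tree in $\mathcal{T}(i)$. More seriously, the proposed route to stability is not sound: the property $P(s,t,Y)$ ``the mixing class of $s$ is not altered by swapping nodes inside $s$ with alternatives from $Y$'' is not of the kind to which Lemma~\ref{lem.1}(2) applies. That lemma requires that for each $s,t,X$ there \emph{exist} a $Z\le X$ for which the property holds, and that is precisely what one cannot guarantee here: whether a swap changes the mixing class of $s$ is a fact about $s$ and the coloring, not something one can achieve by shrinking $Y$. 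The fusion lemma packages decisions that the Abstract Ellentuck Theorem has already secured; it cannot create invariance where none exists. Without the Claim~\ref{claim.E} canonization (\Erdos-Rado\ on each block), ``essential'' need not be coherent across different $s$ with the same initial segment, and the resulting $\vp$ need not represent $\R$.

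Two further points. First, you defer the Sperner property to a case analysis and describe it as the main obstacle, but in the paper Sperner-ness falls out of Claim~\ref{claim.Tsametype} (mixed $s,t$ in $\hat{\mathcal{F}}\setminus\mathcal{F}$ have isomorphic $T_s,T_t$ and mixing of $s\cup u, t\cup v$ is governed by $\vp_s(u)=\vp_t(v)$), so the real work is again in the canonization-of-mixing step, not in fabricating counterexamples after the fact. Second, the maximality clause of Definition~\ref{defn.canonical} is not ``immediate from the construction''; it requires an argument (Lemma~\ref{lem.irredssephi}) that stabilizes an arbitrary inner $\vp'$ via the Abstract Nash-Williams Theorem and Lemma~\ref{lem.1} and then shows each $S_{r_i(t)}\sse T_{r_i(t)}$ by a mixing contradiction. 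To repair the proposal, replace the ``essential nodes'' step and the appeal to Lemma~\ref{lem.1}(2) with an explicit Claim~\ref{claim.E}-style lemma: for each $s\in\hat{\mathcal{F}}\setminus\mathcal{F}$, shrink so that the mixing relation on $\mathcal{R}_1(|s|)|\cdot/s$ is $\E_{T_s}$ for some $T_s\in\mathcal{T}(|s|)$, and then define $\vp$ by these projections.
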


\begin{proof}
Let $A\in\mathcal{R}_1$,
let $\mathcal{F}$ be a given front on $[\emptyset,A]$,
and let $\R$ be an equivalence relation on $\mathcal{F}$.
Let   $f:\mathcal{F}\ra\bN$ be any mapping which induces $\R$.
By thinning if necessary, we may assume that
 $A$ satisfies Lemma \ref{claim.A}.
Let $(\hat{\mathcal{F}}\setminus\mathcal{F})|X$ denote the collection of those $t\in\hat{\mathcal{F}}\setminus\mathcal{F}$ such that $t\le_{\mathrm{fin}} X$.

\begin{claim}\label{claim.E}
There is a $B\le A$ such that for  all $s\in(\hat{\mathcal{F}}\setminus\mathcal{F})| B$,
letting $n$ denote $|s|$,
there is an equivalence relation $\E_s\in\mathcal{E}(n)$ such that,
for all $u,v\in  \mathcal{R}_1(n)|B/s$,
$B$ mixes $s\cup u$ and $s\cup v$
if and only if $u \E_s v$.
\end{claim}

\begin{proof}
For any $X\le A$ and
$s\in\mathcal{AR}|A$,
let $P(s,X)$ denote the following statement:
``If
 $s\in\hat{\mathcal{F}}\setminus\mathcal{F}$,
then
there is an equivalence relation
$\E_s\in \mathcal{E}(|s|)$ such that for all $u,v\in
\mathcal{R}_1(|s|)|X/s$,
$X$ mixes $s\cup u$ and $s\cup v$ if and only if $u\E_s v$.''
We shall show that for each $X\le A$ and $s\in\mathcal{AR}|A$, there is a $Z\le X$ for which  $P(s,Z)$ holds.
The claim  then follows from Lemma \ref{lem.1}.

Let $X\le A$ and  $s\in\hat{\mathcal{F}}\setminus\mathcal{F}$ be given, and let $n=|s|$.
Let $\R$ denote the following equivalence relation  on
$\mathcal{R}_1(n)|A/s$:
 $u\R v$ if and only if $A$ mixes $s\cup u$ and $s\cup v$.
Let
\begin{equation}
\mathcal{X}=\{X'\le X: A\mathrm{\ mixes\ }s\cup X'(n)\mathrm{\ and\ } s\cup \pi_{T_n}(X'(n+1)) \},
\end{equation}
where $T_n$ denotes $\{\lgl\rgl,\lgl 0\rgl, \lgl 0,i\rgl:i\in n\}$.
That is, $\pi_{T_n}(X'(n+1))$ is the subtree of $X'(n+1)$ consisting of all but the rightmost branch of $X'(n+1)$.
By the Abstract Ellentuck Theorem,
there is an $X'\le X$ such that either $[\emptyset,X']\sse\mathcal{X}$, or $[\emptyset,X']\cap\mathcal{X}=\emptyset$.
Thinning again, leaving off the rightmost branch of each $X'(i)$, we obtain a $Y\le X'$ such that
 either
(i) for all $u,v\in \mathcal{R}_1(n)|Y/s$,
$u\R v$; or
(ii) for all $u,v\in \mathcal{R}_1(n)|Y/s$,
 if $u\R v$  then $\pi_{T_{\lgl 0\rgl}}(u)=\pi_{T_{\lgl 0\rgl}}(v)$.
If case (i) holds, let $Z=Y$ and $\E_s=\E_{T_{\lgl\rgl}}$.

Otherwise, case (ii) holds.
For each $I\sse n+1$,
define
\begin{align}
\mathcal{Y}_I=\{Y'\le Y :\  &
\forall u,v\in\mathcal{R}_1(n)|Y'(2n+2)\cr
&  (A
\mathrm{\ mixes\ } s\cup u\mathrm{\ and\ } s\cup v\mathrm{\ iff\ }
\pi_{T_I}(u)=\pi_{T_I}(v))\}.
\end{align}
Here, we are allowing $I$ to be empty.
Let $\mathcal{Y}'=[\emptyset,Y]\setminus\bigcup_{I\sse n+1}\mathcal{Y}_I$.
Then the $\mathcal{Y}_I$, $I\sse n+1$, along with $\mathcal{Y}'$ form an open  cover of
$[\emptyset,Y]$.
By the Abstract Ellentuck Theorem,  there is a $Z\le Y$ such that either $[\emptyset,Z]\sse\mathcal{Y}_I$ for some $I\sse n+1$, or else $[\emptyset,Z]\sse\mathcal{Y}'$.
By the
 Finite \Erdos-Rado Theorem,
it cannot be the case that $[\emptyset, Z]\sse\mathcal{Y}'$.
So there is an $I\sse n+1$ for which $[\emptyset,Z]\sse\mathcal{Y}_I$.
If $I$ is nonempty,
let $\E_s$ denote the equivalence relation $\E_{T_I}$;
if $I$ is empty, let $\E_s$ denote the equivalence relation $\E_{T_{\lgl 0 \rgl}}$ .
\end{proof}

Fix $B$ be as in Claim \ref{claim.E}.
For $s\in(\hat{\mathcal{F}}\setminus\mathcal{F})|B$ and
 $n=|s|$,
let $\E_s$ be the equivalence relation for $s$ from Claim \ref{claim.E}.
We say that $s$ is
 {\em $\E_s$-mixed by $B$}, meaning that
 for all $u,v\in
\mathcal{R}_1(n)|B/s$,
$B$ mixes $s\cup u$ and $s\cup v$ if and only if $u\E_s  v$.
Let $T_s$ denote the subtree of $\tilde{T}(n)$ such that $\E_s=\E_{T_s}$.

\begin{defn}\label{def.vp^1}
For $s\in\hat{\mathcal{F}}|B$, $n=|s|$,
and $i< n$,
define
\begin{equation}
\vp_{r_i(s)}(s(i))
=\pi_{T_{r_i(s)}}(s(i)).
\end{equation}
For $s\in\mathcal{F}|B$, define
\begin{equation}
\vp(s)=\bigcup_{i<|s|}\vp_{r_i(s)}(s(i)).
\end{equation}
\end{defn}

\begin{claim}\label{claim.G}
The following are true for all $X\le B$ and all $s,t\in\hat{\mathcal{F}}| B$.
\begin{enumerate}
\item[(A1)]
Suppose $s\not\in\mathcal{F}$ and $n=|s|$.
Then $X$ mixes $s\cup u$ and $t$ for at most one $\E_{s}$ equivalence class of $u$'s in
$\mathcal{R}_1(n)|B/s$.
\item[(A2)]
If $X/(s,t)$ separates $s$ and $t$,
then $X/(s,t)$ separates $s\cup x$ and $t\cup y$ for all  $x,y\in\Ext(X/(s,t))$ such that $s\cup x,t\cup y\in\hat{\mathcal{F}}$.
\item[(A3)]
Suppose $s\not\in\mathcal{F}$ and  $n=|s|$.
Then
$T_s=T_{\lgl\rgl}$
if and only if $X$ mixes $s$ and $s\cup u$
for all $u\in\mathcal{R}_1(n)|B/s$.
\item[(A4)]
If $s\sqsubset t$
and  $\vp(s)=\vp(t)$,
then $X$ mixes $s$ and $t$.
\end{enumerate}
\end{claim}

\begin{proof}
(A1)\ \
Suppose that there are $u,v\in\mathcal{R}_1(n)|B/s$ such that $s\cup u,s\cup v\in\hat{\mathcal{F}}$,
 $u\not\E_s\, v$,
$X$ mixes $s\cup u$ and $t$, and
$X$ mixes $s\cup v$ and $t$.
Then by transitivity of mixing,
$X$ mixes $s\cup u$ and $s\cup v$.
But this contradicts the fact that $X$ $\E_s$-mixes $s$.

(A2)\ \
Suppose that $X/(s,t)$ separates $s$ and $t$.
Let $x,y\in\Ext(X/(s,t))$ be such that $s\cup x,t\cup y\in\hat{\mathcal{F}}$.
Then for any $x',y'\in\Ext(X/(s,t))$  such that $s\cup x\cup x', t\cup y\cup y'\in\mathcal{F}$,
it must be the case that $f(s\cup x\cup x')
\ne f(t\cup y\cup y')$.

(A3)
Suppose $n=|s|$ and $T_s=T_{\lgl\rgl}$.
Suppose toward a contradiction that
then $X/(s\cup u)$ separates $s$ and $s\cup u$ for some $u\in\mathcal{R}_1(n)|X/s$.
By (A2), $X/(s\cup u)$ separates $s\cup v$ and $s\cup u\cup u'$, for all $v,u'\in\Ext(X/(s\cup u))$
such that $s\cup v,s\cup u\cup u'\in\hat{\mathcal{F}}$.
But taking $u'=\emptyset$ and $v\in\mathcal{R}_1(n)|X/(s\cup u)$,
$X/(s\cup u)$ mixes $s\cup u$ and $s\cup v$, by Claim \ref{claim.E}; contradiction.
Hence, $X/(s\cup u)$ mixes $s$ and $s\cup u$ for all $u\in\mathcal{R}_1(n)|B/s$.
Conversely, if $X$ mixes $s$ and $s\cup u$ for all $u\in\mathcal{R}_1(n)|X/s$,
then, for all $u,v\in \mathcal{R}_1(n)|X/s$, $X$ mixes $s\cup u$ and $s\cup v$,
by transitivity of mixing.
Hence, $T_s$ must be $T_{\lgl\rgl}$.

(A4)\ \
 By the definition of $\vp$,
it is clear  that
 for all $|s|\le i< |t|$,
$T_{t\re i}=T_{\lgl\rgl}$.
By induction on $|s|\le i< |t|$ using (A3) and transitivity of mixing,
it follows that $X$ mixes $s$ and $t$.
\end{proof}

\begin{claim}\label{claim.Tsametype}
If $s,t\in(\hat{\mathcal{F}}\setminus\mathcal{F})|B$ are mixed by $B/(s,t)$, then $T_s$ and $T_t$ are isomorphic.
Moreover, there is a $C\le B$ such that for all $s,t\in(\hat{\mathcal{F}}\setminus\mathcal{F})|C$, for all  $u\in\mathcal{R}_1(|s|)|C/(s,t)$ and $v\in\mathcal{R}_1(|t|)|C/(s,t)$,
 $C$ mixes $s\cup u$ and $t\cup v$
 if and only if $\vp_s(u)=\vp_t(v)$.
\end{claim}

\begin{proof}
Suppose $s,t\in (\hat{\mathcal{F}}\setminus\mathcal{F})|B$ are mixed by $B/(s,t)$, and let $X\le B$.
Let $i=|s|$ and $j=|t|$.

Suppose that $T_s=T_{\lgl\rgl}$ and $T_t\ne T_{\lgl\rgl}$.
By  (A1), $B$ mixes $s$ and $t\cup v$  for at most one $\E_t$ equivalence class of $v$'s in $\mathcal{R}_1(j)|B/t$.
Since $T_t\ne T_{\lgl\rgl}$, there is a $Y\le X/(s,t)$ such that for each $v\in\mathcal{R}_1(j)|Y$,
$Y$ separates $s$ and $t\cup v$.
Since $T_s=T_{\lgl\rgl}$,
it follows from (A4) that
for all $u\in\mathcal{R}_1(i)|Y$, $Y$ mixes $s$ and  $s\cup u$.
If there are $u\in\mathcal{R}_1(i)|Y$
and $v\in\mathcal{R}_1(j)|Y$ such that $Y$ mixes $s\cup u$ and $t\cup v$,
then $Y$ mixes $s$ and $t\cup v$, by transitivity of mixing.
This contradicts that for each $v\in\mathcal{R}_1(j)|Y$,
$Y$ separates $s$ and $t\cup v$.
Therefore, all extensions of $s$ and $t$ into $Y$ are separated.
But then $s$ and $t$ are separated, contradiction.
Hence, $T_t$ must also be $T_{\lgl\rgl}$.
By a similar argument, we conclude that $T_s=T_{\lgl\rgl}$ if and only if $T_t=T_{\lgl\rgl}$.
In this case,
 $\vp_s(u)=\vp_t(v)=\{\lgl\rgl\}$ for all
$u\in\mathcal{R}_1(i)|B$ and $v\in\mathcal{R}_1(j)|B$.

Suppose now that both  $T_s$ and $T_t$ are not $T_{\lgl \rgl}$.
Let $X\le B$, $m=\max(i,j)+1$, and $k=m^m$.
Let
\begin{align}
\mathcal{Z}_{<}&
=\{Y\le X:B\mathrm{\ separates\ } s\cup Y(i)\mathrm{\ and\ }t\cup \pi_{\tilde{T}(j)}(Y(k))\}\cr
\mathcal{Z}_{>}&
=\{Y\le X:B\mathrm{\ separates\ } s\cup \pi_{\tilde{T}(i)}(Y(k))\mathrm{\ and\ }t\cup Y(j)\}.
\end{align}
Applying the Abstract Ellentuck Theorem to the sets $\mathcal{Z}_<$ and $\mathcal{Z}_>$, we  obtain an $X'\le X$ such that, for all $u\in\mathcal{R}_1(i)|X'$ and $v\in\mathcal{R}_1(j)|X'$,
   $s\cup u$ and $t\cup v$ may be mixed by $B$ only if $u$ and $v$ are subtrees of the same $X'(l)$ for some $l$.
For each pair of trees  $S,T\in\mathcal{T}(k)$
such that 
$\pi_S(\tilde{T}(k))\in\mathcal{R}_1(i)$ and $\pi_T(\tilde{T}(k))\in\mathcal{R}_1(j)$,
let
\begin{equation}
\mathcal{X}_{S,T}=\{Y\le X': B\mathrm{\ mixes\ } s\cup\pi_{S}(Y(k))\mathrm{\ and\ } t\cup\pi_{T}(Y(k))\}.
\end{equation}
By finitely many applications of the Abstract Ellentuck Theorem,
we may thin to a $Y\le X'$ which is homogeneous for $\mathcal{X}_{S,T}$ for each such  pair $S,T$.

\begin{subclaimn}
There is a $Y'\le Y$ such that for each  pair 
$S,T\in\mathcal{T}(k)$
such that 
$\pi_S(\tilde{T}(k))\in\mathcal{R}_1(i)$ and $\pi_T(\tilde{T}(k))\in\mathcal{R}_1(j)$,
and  each $Z\le Y'$,
if $\vp_{s}(\pi_{S}(Z(k)))\ne\vp_{t}(\pi_{T}(Z(k)))$,
then
$[\emptyset,Z]\cap\mathcal{X}_{S,T}=\emptyset$.
\end{subclaimn}

Suppose not.
Then there is such  a pair $S,T$ such that for each $Y'\le Y$, there is a $Z\le Y'$ such that
$\vp_{s}(\pi_{S}(Z(k)))\ne\vp_{t}(\pi_{T}(Z(k)))$,
but
$[\emptyset,Z]\cap\mathcal{X}_{S,T}\ne\emptyset$.
Recall that 
$\vp_{s}(\pi_{S}(Z(k)))= \pi_{T_s}\circ\pi_{S}(Z(k))$ and 
$\vp_{t}(\pi_{T}(Z(k)))=\pi_{T_t}\circ\pi_{T}(Z(k))$.
We may apply the Abstract Ellentuck Theorem  to thin to some $Y'\le Y$ so that
for each $Z\le Y'$, $\pi_{T_s}\circ\pi_{S}(Z(k))\ne\pi_{T_t}\circ\pi_{T}(Z(k))$, but
 $[\emptyset,Y']\sse\mathcal{X}_{S,T}$.
Suppose  there is some
$q\in
\pi_{T_s}\circ\pi_{S}(\tilde{T}(k))\setminus\pi_{T_t}\circ\pi_{T}(\tilde{T}(k))$.
Take $w,w'\in \mathcal{R}_1(k)|Y(l)$  for some $l$
 such that
$w$ and $w'$ differ exactly on their  elements in the place $q$ and any extensions of $q$.
(That is, for each $q'\in\tilde{T}(k)$,
$\pi_{\{q'\}}(w)\ne\pi_{\{q'\}}(w')$ if and only if $q'\sqsupseteq q$.)
Let $u=\pi_{T_s}\circ\pi_{S}(w)$, $u'=\pi_{T_s}\circ\pi_{S}(w')$,
$v=\pi_{T_t}\circ\pi_{T}(w)$,
and $v'=\pi_{T_t}\circ\pi_{T}(w')$.
Then
$u\not\E_s u'$ but
$v\E_t v'$.
Since $[\emptyset,Y']\sse\mathcal{X}_{S,T}$, $B$ mixes $s\cup u$ and $t\cup v$, and $B$ mixes $s\cup u'$ and $t\cup v'$.
$B$ mixes $t\cup v$ and $t\cup v'$, since $v\E_t v'$.
Hence, by transitivity of mixing, $B$ mixes $s\cup u$ and $s\cup u'$, contradicting that $u\not\E_s  u'$.
Likewise, we obtain a contradiction if
there is some
$q\in
\pi_{T_t}\circ\pi_{T}(\tilde{T}(k))\setminus
\pi_{T_s}\circ\pi_{S}(\tilde{T}(k))$.
Therefore, the Subclaim holds.

Since $S,T$ range over all possible such pairs,
 possibly thinning again, there is a $Z\le Y'/(s,t)$ such that the following holds.
For all $u\in\mathcal{R}_1(i)|Z$ and $v\in\mathcal{R}_1(j)|Z$,
 if $s\cup u$ and $t\cup v$ are mixed by $B$, then $\vp_{s}(u)=\vp_t(v)$.
It follows that $T_s$ and $T_t$ must be isomorphic.

Thus, we have shown that there is a $Z\le X$ such that for all $u\in\mathcal{R}_1(i)|Z$ and $v\in\mathcal{R}_1(j)|Z$,
if $Z$ mixes $s\cup u$ and $t\cup v$, then $\vp_s(u)=\vp_t(v)$.
It remains to show that
there is a $C\le Z$ such that for all $u\in\mathcal{R}_1(i)|Z$ and $v\in\mathcal{R}_1(j)|Z$,
 if $\vp_s(u)=\vp_t(v)$, then $Z$ mixes $s\cup u$ and $t\cup v$.

Suppose $S,T\in\mathcal{T}(k)$
is a pair such that
$\pi_S(\tilde{T}(k))\in\mathcal{R}_1(i)$ and $\pi_T(\tilde{T}(k))\in\mathcal{R}_1(j)$,
and
 for all $w\in\mathcal{R}_1(k)|Z$,
$\vp_s(\pi_{S}(w))=\vp_t(\pi_{T}(w))$.
Assume towards a contradiction that $[\emptyset,Z]\cap\mathcal{X}_{S,T}=\emptyset$.
Then for all $w\in \mathcal{R}_1(k)|Z$,
$Z$ separates $s\cup \pi_{S}(w)$ and $t\cup \pi_{T}(w)$.
Let $S',T'$ be any pair in  $\mathcal{T}(k)$
such that $\pi_{S'}(\tilde{T}(k))\in\mathcal{R}_1(i)$ and $\pi_{T'}(\tilde{T}(k))\in\mathcal{R}_1(j)$, and moreover
 such that $\vp_s(\pi_{S'}(x))=\vp_t(\pi_{T'}(x))$ for any (all) $x\in \mathcal{R}_1(k)|Z$.
Then there are $x,y\in \mathcal{R}_1(k)|Z$ such that $\pi_{S}(x)\E_s \pi_{S'}(y)$ and $\pi_{T}(x)\E_t \pi_{T'}(y)$.
 $Z$ mixes $s\cup \pi_{S}(x)$ and $s\cup \pi_{S'}(y)$, and $Z$ mixes $t\cup \pi_{T}(x)$ and $t\cup \pi_{T'}(y)$.
Thus, $Z$ must separate $s\cup \pi_{S'}(w)$ and $t\cup \pi_{T'}(w)$ for all $w\in \mathcal{R}_1(k)|Z$.

Given any $S',T'$ for which
 $\vp_s(\pi_{S'}(x))\not=\vp_t(\pi_{T'}(x))$,
$Z$ separates $s\cup \pi_{S'}(x)$ and $t\cup \pi_{T'}(x)$.
Thinning again, we obtain a   $Z'\le Z$ which  separates $s$ and $t$,
 contradiction.
 Therefore, $[\emptyset,Z]\sse\mathcal{X}_{S,T}$,
 and thus $Z$ mixes $s\cup \pi_{S}(W(k))$ and $t\cup \pi_{T}(W(k))$ for all $W\le_2 Z$.

Hence, for all pairs $S,T$, we have that
$\vp_s(\pi_{S}(w))=\vp_t(\pi_{T}(w))$ if and only if $[\emptyset,Z]\sse\mathcal{X}_{S,T}$.
Thus,  for all $u\in\mathcal{R}_1(i)|Z$ and $v\in\mathcal{R}_1(j)|Z$,
$Z$ mixes $s\cup u$ and $t\cup v$ if and only if
$\vp_s(u)=\vp_t(v)$.

Finally, we have shown that for all $s,t\in(\hat{\mathcal{F}}\setminus\mathcal{F})|B$ and each $X\le B$, there is a $Z\le X$ such that for all $u\in\mathcal{R}_1(i)|Z$ and $v\in\mathcal{R}_1(j)|Z$,
$Z$ mixes $s\cup u$ and $t\cup v$ if and only if $\vp_s(u)=\vp_t(v)$.
By Lemma \ref{lem.1},
there is a $C\le B$ for which the Claim holds.
\end{proof}

\begin{claim}\label{claim.phisamefsame}
For all $s,t\in\hat{\mathcal{F}}|C$,
if $\vp(s)=\vp(t)$, then $s$ and $t$ are mixed by $C$.
Hence, for all $s,t\in\mathcal{F}|C$,
if $\vp(s)=\vp(t)$, then $f(s)=f(t)$.
\end{claim}

\begin{proof}
Let $s,t\in\hat{\mathcal{F}}|C$,
and suppose that $\vp(s)=\vp(t)$.
It follows that for each $l$, $\vp(s\cap r_l(C))=\vp(t\cap r_l(C))$.

The proof is by induction on $l\le\max(\depth_C(s),\depth_C(t))$.
For $l=0$,
$s\cap r_0(C)=t\cap r_0(C)=\emptyset$, so $C$ mixes $s\cap r_0(C)$ and $t\cap r_0(C)$.
Suppose that $C$ mixes $s\cap r_l(C)$ and $t\cap r_l(C)$.
If $s\cap C(l)=t\cap C(l)=\emptyset$,
then $s\cap r_{l+1}(C)=s\cap r_l(C)$ and $t\cap r_{l+1}(C)=t\cap r_l(C)$; hence  $s\cap r_{l+1}(C)$ and $t\cap r_{l+1}(C)$ are mixed by $C$.
If $s\cap C(l)\ne\emptyset$ and $t\cap C(l)=\emptyset$
then $\vp(s\cap r_{l+1}(C))=\vp(t\cap r_{l+1}(C))$ implies that $T_{r_i(s)}=T_{\lgl\rgl}$, where $i$ is such that $s(i)\sse C(l)$.
By (A4), $r_i(s)=s\cap r_l(C)$ and $r_{i+1}(s)=s\cap r_{l+1}(C)$ are mixed by $C$.
Thus, $s\cap r_{l+1}(C)$ and $t\cap r_{l+1}(C)=t\cap r_l(C)$ are mixed by $C$.
Similarly, if $s\cap C(l)=\emptyset$ and $t\cap C(l)\ne\emptyset$, mixing of $s\cap r_{l+1}(C)$ and $t\cap r_{l+1}(C)$ again follows from (A4).
If both $s\cap C(l)\ne\emptyset$ and $t\cap C(l)\ne \emptyset$,
then by Claim \ref{claim.Tsametype}, $s\cap r_{l+1}(C)$ and $t\cap r_{l+1}(C)$ are mixed by $C$.

By induction,
$s$ and $t$ are mixed by $C$.
In particular, if $s,t\in\mathcal{F}|C$,
then $f(s)=f(t)$.
\end{proof}

\begin{claim}\label{claim.front}
For all $s,t\in\mathcal{F}|C$, $\vp(s)\not\sqsubset \vp(t)$.
\end{claim}

\begin{proof}
Suppose $\vp(s)\sqsubset \vp(t)$.
Let $j$ be maximal such that $\vp(s)=\vp(r_j(t))$.
Then $T_{r_j(t)}\ne T_{\lgl\rgl}$.
Let $l$ be such that $t(j)\sse C(l)$.
Then $r_j(t)=t\cap r_l(C)$, and
$\vp(s\cap r_l(C))=\vp(s)=\vp(r_j(t))=\vp(t\cap r_l(C))$.
 $C$ mixes $s\cap n_l$ and $t\cap r_l(C)$, by Claim \ref{claim.phisamefsame}.
By (A1), $C$ mixes $s\cap r_l(C)$ and $(t\cap r_l(C))\cup v$ for at most one $\E_{r_j(t)}$ equivalence class of $v$'s in $\mathcal{R}_1(j)|C/r_l(C)$.
So there is an $X\le C$ such that $X$ separates $s\cap r_l(C)$ and $t\cap r_l(C)$, contradicting that $s\cap r_l(C)$ and $t\cap r_l(C)$ are mixed by $C$.
\end{proof}

\begin{claim}\label{claim.L}
For all $s,t\in\mathcal{F}|C$, if $f(s)=f(t)$,
then $\vp(s)=\vp(t)$.
\end{claim}

\begin{proof}
Let $s,t\in\mathcal{F}|C$ with $f(s)=f(t)$,
and let $m=\max(\depth_C(s),\depth_C(t))$.
$f(s)=f(t)$ implies that
for all $l\le m$, $C$ mixes $s\cap r_l(C)$ and $t\cap r_l(C)$.
We shall show by induction that for all $l\le m$, $\vp(s\cap r_l(C))=\vp(t\cap r_l(C))$.
For $l=0$, this is clear, so now suppose
 $l<m$ and  $\vp(s\cap r_l(C))=\vp(t\cap r_l(C))$.
If $s\cap C(l)= t\cap C(l)=\emptyset$,
then  $\vp(s\cap r_{l+1}(C))=\vp(s\cap r_l(C))=\vp(t\cap r_l(C))=\vp(t\cap r_{l+1}(C))$.
If both $s\cap C(l)\ne\emptyset$ and $t\cap C(l)\ne\emptyset$,
then by Claim \ref{claim.Tsametype},
$\vp(s\cap r_{l+1}(C))=\vp(t\cap r_{l+1}(C))$.

Finally, suppose that $s\cap C(l)\ne\emptyset$ and $t\cap C(l)=\emptyset$.
Let $i$ be such that $s(i)\sse C(l)$.
If $T_{r_i(s)}\ne T_{\lgl\rgl}$,
then  $t\cap r_{l+1}(C)$ must be a proper initial segment of $t$;
otherwise, we would have $\vp(t)=\vp(t\cap r_{l+1}(C))=\vp(t\cap r_l(C))=\vp(s\cap r_l(C))\sqsubset \vp(s)$, contradicting Claim \ref{claim.front}.
Let $j$ be such that $r_j(t) = t\cap r_{l+1}(C)$.
Then $j<|t|$.
$C$ mixes $r_{j+1}(s) = (s\cap r_l(C))\cup s(i)$ and $r_{j+1}(t)=(t\cap r_l(C))\cup t(j)$;
so $\vp_{r_i(s)}(s(i))=\vp_{r_j(t)}(t(j))$,
by Claim \ref{claim.Tsametype}.
But this contradicts the facts that $T_{r_i(s)}\ne T_{\lgl\rgl}$, $s(i)\sse C(l)$, and $t(j)\cap C(l)=\emptyset$.
It follows that $T_{r_i(s)}$ must be $T_{\lgl\rgl}$;
hence, $\vp(s\cap r_{l+1}(C))=\vp(t\cap r_{l+1}(C))$.
Likewise, if $s\cap C(l)=\emptyset$ and $t\cap C(l)\ne\emptyset$, we find that $\vp(s\cap r_{l+1}(C))=\vp(t\cap r_{l+1}(C))$.
\end{proof}

It remains to show that $\vp$ witnesses that $\R$ is canonical.
By definition, $\vp$ is inner, and by Claim \ref{claim.front},
$\vp$ is Nash-Williams.
By Claims \ref{claim.phisamefsame} and  \ref{claim.L}, we have that
for each $a,b\in\mathcal{F}|C$, $a \R b$ if and only if $\vp(a)=\vp(b)$.
It then follows from Claim \ref{claim.Tsametype}
that $\vp$ is Sperner.
Thus, it only remains to show that $\vp$ is maximal among all inner Nash-Williams maps $\vp'$ on $\mathcal{F}|C$ which also represent the equivalence relation $\R$.
Toward this end, we prove the following Lemma.

\begin{lem}\label{lem.irredssephi}
Suppose $X\le C$ and $\vp'$ is an inner  function on $\mathcal{F}|X$ which represents $\R$.
Then
there is a $Y\le X$ such that for each $t\in\mathcal{F}|Y$,
for each $i<|t|$, there is a tree $S_{r_i(t)}\sse T_{r_i(t)}$
such that
the following hold.
\begin{enumerate}
\item
For each $s\in\mathcal{F}|Y$ for which $s\sqsupset r_i(t)$,
$\vp'(s)\cap s(i)=\pi_{S_{r_i(t)}}(s(i))$.
\item
$\vp'(t)=\bigcup\{\pi_{S_{r_i(t)}}(t(i)):i<|t|\}\sse \vp(t)$.
\end{enumerate}
Thus, $\vp$
 is $\sse$-maximal among all inner functions $\vp'$ on $\mathcal{F}|C$ which represent $\R$.
\end{lem}

\begin{proof}
Let $X\le C$ and   $\vp'$
satisfy the hypotheses.
Note that $\vp'$ is  inner and also represents the equivalence relation $\R$.
For each $t\in\mathcal{F}$, $i<|t|$, and $X'\le X$, since $\vp'$ is inner, by the Abstract Nash-Williams Theorem there is an $X''\le X'$ such that the following holds:
There is a tree $S_{r_i(t)}\in\mathcal{T}(i)$
such that for each $s\in \mathcal{F}$ extending $r_i(t)$ with $s\setminus r_i(t)\in\Ext(X'')$,
$\vp'(s)\cap s(i)=\pi_{S_{r_i(t)}}(s(i))$.
By Lemma \ref{lem.1},
there is a $Y\le X$ such that for each $t\in\mathcal{F}|Y$ and each $i<|t|$,
there is a tree $S_{r_i(t)}$ satisfying (1).
Thus,  for each $t\in\mathcal{F}|Y$,
\begin{equation}
\vp'(t)=\bigcup\{\pi_{S_{r_i(t)}}(t(i)):i<|t|\}.
\end{equation}

Note that each $S_{r_i(t)}$ must be contained within $T_{r_i(t)}$, the tree from Theorem \ref{thm.PRR(1)} associated with $\E_{r_i(t)}$-mixing of immediate extensions of $r_i(t)$.
Otherwise, there would be $u,v\in\mathcal{R}_1(i)|Y/r_i(t)$ such that $r_i(t)\cup u$ and $r_i(t)\cup v$ are mixed, yet all extensions of them have different $\vp'$ values, which would contradict that $\vp'$ induces the same equivalence relation as $f$.
Thus, for each $t\in\mathcal{F}|Y$,
$\vp'(t)\sse\vp(t)$.
\end{proof}

By
Lemma \ref{lem.irredssephi}, $\R$ is canonical on $\mathcal{F}|C$, which  finishes the proof of the theorem.
\end{proof}

\begin{rem}
The map $\vp$ from Theorem \ref{thm.PRR(1)} has the following property.
One can thin to a $Z$  such that
\begin{enumerate}
\item[$(*)$]
for each
$s\in\mathcal{F}|Z$, there is a $t\in\mathcal{F}$ such that $\vp(s)=\vp(t)=s\cap t$.
\end{enumerate}
This is not the case for any smaller inner map $\vp'$, by Lemma \ref{lem.irredssephi}.
For  suppose $\vp'$ is an inner map representing $\R$,  $\vp'$ satisfies  the conclusions of Lemma \ref{lem.irredssephi} on $\mathcal{F}|Y$, and there is an $s\in\mathcal{F}|Y$ for which $\vp'(s)\subsetneq \vp(s)$.
Then there is some $i<|s|$ for which the tree $S_{r_i(s)}\subsetneq T_{r_i(s)}$.
This implies that $\vp'(t)\subsetneq \vp(t)$ for every $t\in\mathcal{F}|Y$ such that $t\sqsupset r_i(s)$.
Recall that $\vp'(t)=\vp'(s)$ if and only if $\vp(t)=\vp(s)$; and in this case, $\vp(t)\cap\vp(s)\sse t\cap s$.
It follows that for any $t$ for which $\vp'(t)=\vp'(s)$,
$\vp'(t)\cap\vp'(s)$ will always be a proper subset of $t\cap s$.
Thus, $\vp$ is the minimal inner map for which property $(*)$ holds.

It may also be of interest to note that for $\vp'$ inner and  $s\in\mathcal{F}|Z$ from Lemma \ref{lem.irredssephi}, 
if $i<|s|$ is maximal such that 
$T_{r_i(s)}\ne T_{\lgl\rgl}$,
then $i$ is also maximal such that $S_{r_i(s)}\ne T_{\lgl\rgl}$, and moreover, $S_{r_i(s)}=T_{r_i(s)}$.
\end{rem}

\begin{example}
Let $\mathcal{F}$ be the analogue of the Shreier barrier for $\mathcal{R}_1$.
That is, enumerating the elements of $\mathcal{R}_1(0)$ as $\{a_n:n<\om\}$,
 $\mathcal{F}_{a_n}$, the collection of all $t\in\mathcal{F}$ such that $t(0)=a_n$, is isomorphic to $\mathcal{AR}_{n}$.
Let $\R$ be the equivalence relation on $\mathcal{F}$, where  $s\R t$ if and only if $|t|=|s|$ and $t(|t|-1)=s(|s|-1)$.
Then the map $\vp$ from Theorem \ref{thm.PRR(1)} for $\R$ has the property that $\vp(t)\cap t(0)=t(0)$ for all $t\in\mathcal{F}$.

The following map $\vp'$  is  inner Nash-Williams and also represents the  equivalence relation $\R$.
Let $\vp'(t)= t(|t|-1)$, for each $t$ in $\mathcal{F}$.
Then $\vp'(t)\subsetneq \vp(t)$ for all $t\in\mathcal{F}$.
However, $\vp'$  does not satisfy the property $(*)$.
\end{example}
\vskip.1in

We now prove Theorem \ref{thm.original}.
\vskip.1in

\begin{proof}
\it (Theorem \ref{thm.original}). \rm
Let $1\le n<\om$ and $\R$ be an equivalence relation on $\mathcal{AR}_n$.
Let $f:\mathcal{AR}_n\ra\bN$ be any function which induces the equivalence relation $\R$.
Let $C\le A$ be obtained from Theorem \ref{thm.PRR(1)}.
Then for each $s\in\mathcal{AR}_n|C$, there is a sequence $\lgl T_{r_i(s)}:i<n\rgl$ of trees, where each $T_{r_i(s)}\in\mathcal{T}(i)$,
satisfying the following.
For each $s,t\in\mathcal{AR}_n|C$,
$f(s)=f(t)$ if and only if $\bigcup_{i<n}\pi_{T_{r_i(s)}}(s(i))=\bigcup_{i<n}\pi_{T_{r_i(t)}}(t(i))$.
We shall apply the Abstract Ellentuck Theorem to obtain a $D\le C$ such that for all $s,t\in\mathcal{AR}_n|D$ and all $i<n$, $T_{r_i(s)}=T_{r_i(t)}$.
By  Theorem \ref{thm.PRR(1)},
for all $s,t\in\mathcal{AR}_n|C$, $T_{r_0(s)}=T_{r_0(t)}$, so let $X_0=C$ and $T(0)=T_{r_0(s)}$ for any (all) $s\in\mathcal{AR}_n|C$.
Given $i<n-1$, $X_i$, and $T(i)$, then
for each $T\in\mathcal{T}(i+1)$, define
\begin{equation}
\mathcal{X}_T=\{X\le C: T_{r_{i+1}(X)}=T\}.
\end{equation}
The open sets $\mathcal{X}_T$, $T\in\mathcal{T}(i+1)$, cover $[\emptyset,C]$, so there is some $T(i+1)\in\mathcal{T}(i+1)$ and some  $X_{i+1}\le X_i$ such that $[\emptyset,X_{i+1}]\sse\mathcal{X}_{T(i+1)}$.

Let $D=X_{n-1}$.
Then for all $s,t\in\mathcal{AR}_n|D$,
\begin{align}
f(s)=f(t) &\Leftrightarrow \vp(s)=\vp(t)\cr
&\Leftrightarrow
\forall i<n,\
\pi_{T_{r_i(s)}}(s(i))=\pi_{T_{r_i(t)}}(t(i))\cr
&\Leftrightarrow
\forall i<n,\
\pi_{T(i)}(s(i))=\pi_{T(i)}(t(i))\cr
&\Leftrightarrow
\forall i<n,\
s(i)\E_{T(i)} t(i).
\end{align}
Thus, the equivalence relation induced by $f$ is canonical on $\mathcal{AR}_n|D$.
\end{proof}

\begin{cor}\label{cor.canonR(n)}
Let $A\in\mathcal{R}_1$, $1\le n<\om$, and $\E$ be an equivalence relation on $\mathcal{R}_1(n)|A$.
Then there is a $C\le A$ and a tree $T\in\mathcal{T}(n)$ such that for all $a,b\in\mathcal{R}_1(n)|C$,
\begin{equation}
a\E b\Leftrightarrow \pi_T(a)=\pi_T(b).
\end{equation}
\end{cor}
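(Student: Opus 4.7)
The plan is to reduce the statement to Theorem \ref{thm.original} applied to the barrier $\mathcal{AR}_{n+1}|A$. Since each $s\in\mathcal{AR}_{n+1}|A$ has its final component $s(n)$ sitting in $\mathcal{R}_1(n)|A$, the equivalence relation $\E$ lifts naturally to an equivalence relation $\R$ on $\mathcal{AR}_{n+1}|A$ defined by $s\R t \Leftrightarrow s(n)\E t(n)$.

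Applying Theorem \ref{thm.original} to $\R$ yields a $D\le A$ and trees $T(0)\in\mathcal{T}(0),\ldots, T(n)\in\mathcal{T}(n)$ such that, for all $s,t\in\mathcal{AR}_{n+1}|D$,
\begin{equation*}
s\R t \Leftrightarrow \forall i\le n,\ \pi_{T(i)}(s(i)) = \pi_{T(i)}(t(i)).
\end{equation*}
The next step is to show that $T(i)=T_{\lgl\rgl}$ for every $i<n$. Since $\R$ depends only on the $n$-th coordinate, any $s,t\in\mathcal{AR}_{n+1}|D$ with $s(n)=t(n)$ satisfy $s\R t$; but if $T(i)\ne T_{\lgl\rgl}$ for some $i<n$, the freedom in choosing lower-level subtrees of $D$ (either by picking $s(i)$ and $t(i)$ inside different $D(m)$'s, or by taking different copies of $\mathbb{T}(i)$ inside the same $D(m)$ with $m>i$) produces a pair with $\pi_{T(i)}(s(i))\ne \pi_{T(i)}(t(i))$, contradicting the canonical form. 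Setting $T:=T(n)\in\mathcal{T}(n)$ and $C:=D$ then gives $s(n)\E t(n) \Leftrightarrow \pi_T(s(n)) = \pi_T(t(n))$ for all $s,t\in\mathcal{AR}_{n+1}|C$.

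The final step is to verify that every $a\in\mathcal{R}_1(n)|C$ arises as $s(n)$ for some $s\in\mathcal{AR}_{n+1}|C$. If $a\sse C(k)$, then $k\ge n$ (since $C(k)\cong\mathbb{T}(k)$ must contain a copy of $\mathbb{T}(n)$), and one may choose subtrees $s(i)\sse C(k-n+i)$ isomorphic to $\mathbb{T}(i)$ for $i<n$ to form an $s\in\mathcal{AR}_{n+1}|C$ with $s(n)=a$. Applied to any $a,b\in\mathcal{R}_1(n)|C$, this yields $a\E b \Leftrightarrow \pi_T(a)=\pi_T(b)$, which is the statement of the corollary. I do not anticipate a serious obstacle here: the main work is already carried out in Theorem \ref{thm.original}, and the only delicate point is ruling out non-trivial $T(i)$ for $i<n$, which rests on the abundance of subtrees isomorphic to $\mathbb{T}(i)$ available inside $D$.
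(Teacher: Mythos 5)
Your proposal is correct and follows the route the paper intends (the corollary is stated without proof, as an immediate consequence of Theorem \ref{thm.original}): lift $\E$ to $\mathcal{AR}_{n+1}$ via the last component, canonize, and observe that the trees $T(i)$ for $i<n$ must be $T_{\lgl\rgl}$ since the lifted relation ignores those coordinates. Your two supporting observations --- that non-trivial $T(i)$ for $i<n$ is ruled out by varying $s(i)$ (across blocks or within a block) while fixing $s(n)$, and that every $a\in\mathcal{R}_1(n)|C$ occurs as $s(n)$ for some $s\in\mathcal{AR}_{n+1}|C$ --- are exactly the details needed and are verified correctly.
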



\section{The Tukey ordering below $\mathcal{U}_1$ in terms of the Rudin-Keisler ordering}\label{sec.R1Tukey}

The canonization theorem from the previous section will now be applied to characterize all ultrafilters which are Tukey reducible to $\mathcal{U}_1$.
Every topological Ramsey space has its own notion of a  Ramsey and selective ultrafilters (see \cite{MR2330595}).
We strengthen the  definition of Ramsey ultrafilter from \cite{MR2330595} to (2) below.

\begin{defn}\label{defn.RamseyufU1}
\begin{enumerate}
\item
We shall say that a subset
$\mathcal{C}\sse\mathcal{R}_1$ {\em  satisfies the Abstract Nash-Williams Theorem} if and only if for each family $\mathcal{G}\sse\mathcal{AR}$ and partition $\mathcal{G}=\mathcal{G}_0\cup\mathcal{G}_1$,
there is a $C\in\mathcal{C}$ and an $i\in 2$ such that  $\mathcal{G}_i|C=\emptyset$.
\item
An ultrafilter $\mathcal{U}_1$ defined on the base set $\mathbb{T}$ is called {\em Ramsey for $\mathcal{R}_1$}
if and only if $\mathcal{U}_1$ is generated by a subset  $\mathcal{C}\sse\mathcal{R}_1$ 
which satisfies the Abstract Nash-Williams Theorem.
\item
An ultrafilter generated by a set $\mathcal{C}\sse\mathcal{R}_1$ is {\em selective for $\mathcal{R}_1$} if and only if
for each decreasing sequence $X_0\ge X_1\ge\dots$ of members of $\mathcal{C}$, there is another $X\in\mathcal{C}$ such that for each $n<\om$, $X\le X_n/r_n(X_n)$.
\end{enumerate}
\end{defn}

Ultrafilters which are Ramsey for $\mathcal{R}_1$ exist, assuming  CH or  MA, or forcing with $(\mathcal{R}_1,\le^*)$.
Since $\mathcal{R}_1$ is isomorphic to a dense subset of Laflamme's forcing $\bP_1$ in \cite{Laflamme89},
any ultrafilter $\mathcal{U}_1$ forced by $(\mathcal{R}_1,\le^*)$ is isomorphic to 
an ultrafilter under the same name forced  by $(\bP_1,\le_{\bP_1}^*)$.

The following facts are straightforward.  (2) is a consequence of Lemma 3.8 in \cite{MR2330595}.
We shall say that   $\mathcal{F}\sse\mathcal{AR}$ is a {\em front on a set $\mathcal{C}\sse\mathcal{R}_1$}
if $\mathcal{F}$ is Nash-Williams, and for each $X\in\mathcal{C}$, there is an $a\in\mathcal{F}$ such that $a\sqsubset X$.

\begin{fact}
\begin{enumerate}
\item
If $\mathcal{U}_1$ is Ramsey for $\mathcal{R}_1$ generated by a set $\mathcal{C}\sse\mathcal{R}_1$, then for each front $\mathcal{F}$ on $\mathcal{C}$ and each 
$\mathcal{G}\sse\mathcal{F}$,
there is a $U\in\mathcal{C}$ such that either $\mathcal{F}|U\sse\mathcal{G}$, or else $\mathcal{F}|U\cap\mathcal{G}=\emptyset$.
\item
Any ultrafilter Ramsey for $\mathcal{R}_1$ is also  selective for $\mathcal{R}_1$.
\end{enumerate}
\end{fact}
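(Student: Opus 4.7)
The plan for (1) is a direct unpacking of the definition. Given the front $\mathcal{F}$ on $\mathcal{C}$ and the subset $\mathcal{G}\sse\mathcal{F}$, I would take the family $\mathcal{F}$ itself and partition it as $\mathcal{F}=\mathcal{G}\cup(\mathcal{F}\setminus\mathcal{G})$. By the hypothesis that $\mathcal{C}$ satisfies the Abstract Nash-Williams Theorem, there are $U\in\mathcal{C}$ and $i\in 2$ such that the $i$-th part of this partition is empty on $U$, i.e., no element of the chosen part is $\le_{\mathrm{fin}} U$. If $\mathcal{G}|U=\emptyset$ then every $a\in\mathcal{F}|U$ lies in $\mathcal{F}\setminus\mathcal{G}$, giving $\mathcal{F}|U\cap\mathcal{G}=\emptyset$; if $(\mathcal{F}\setminus\mathcal{G})|U=\emptyset$, then $\mathcal{F}|U\sse\mathcal{G}$. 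Either way we obtain the required dichotomy. This step has no obstacle; it is a single application of the definition.

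For (2), the plan is a fusion argument modelled on Lemma 3.8 of \cite{MR2330595}. Let $\lgl X_n:n<\om\rgl$ be a decreasing sequence in $\mathcal{C}$. I would build by recursion on $n$ a decreasing sequence $Y_n\in\mathcal{C}$ together with coherent initial segments $a_n\in\mathcal{AR}_n$ so that $Y_n\le X_n/r_n(X_n)$ and $r_n(Y_n)=a_n$, and then set $X=\bigcup_{n<\om}a_n$. At stage $n+1$, having fixed $Y_n\in\mathcal{C}$ with $a_{n+1}=r_{n+1}(Y_n)$, I would apply part (1) to a front on $\mathcal{C}$ obtained by extending approximations of the form $a_{n+1}\cup\{u\}$ with $u\in\mathcal{R}_1(n+1)$ and the subfamily consisting of those with $u\sse X_{n+1}/r_{n+1}(X_{n+1})$. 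Since $X_{n+1}$ itself provides such extensions, the dichotomy from (1) must fall on the side where every legitimate extension of $a_{n+1}$ inside some $Y_{n+1}\in\mathcal{C}$ has its $(n+1)$-st block inside $X_{n+1}/r_{n+1}(X_{n+1})$. Setting $a_{n+2}=r_{n+2}(Y_{n+1})$ and iterating produces the desired fusion, with the limit $X$ satisfying $X\le X_n/r_n(X_n)$ for every $n$ by construction.

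The main obstacle is verifying that the limit $X$ actually lies in $\mathcal{C}$. Nothing in the combinatorial construction above guarantees this a priori, since $\mathcal{C}$ was only assumed to generate $\mathcal{U}_1$ as a filter base and satisfy the Abstract Nash-Williams Theorem. To address this, I would perform the fusion with additional bookkeeping: at each stage, besides restricting to approximations with the correct $n$-th block, I would also diagonalize so that each $Y_n$ is chosen so that the future trees of the fusion stay inside $\mathcal{C}$; this is done by running a countable list of Nash-Williams partitions alongside the main construction and using part (1) to thin at each stage. This extra diagonalization, which is the technical heart of Mijares's Lemma 3.8, ensures that the union $X$ is a member of $\mathcal{C}$, completing the verification that $\mathcal{U}_1$ is selective for $\mathcal{R}_1$.
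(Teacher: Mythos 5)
Your argument for part (1) is correct and is surely the intended one: the paper offers no proof at all (it declares the fact straightforward and, for (2), cites Lemma 3.8 of \cite{MR2330595}), and (1) really is a one-line application of Definition 4.1(1) to the partition $\mathcal{F}=\mathcal{G}\cup(\mathcal{F}\setminus\mathcal{G})$.

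For part (2) there is a genuine gap, and it sits exactly where you flag it. The fusion $X=\bigcup_n a_n$ is a member of $\mathcal{R}_1$, but membership in $\mathcal{C}=\mathcal{U}_1\cap\mathcal{R}_1$ is membership in an ultrafilter, and homogeneity bookkeeping cannot certify that a hand-built $X$ lies in $\mathcal{U}_1$. Your proposed repair---``running a countable list of Nash-Williams partitions alongside the main construction''---is circular: each application of part (1) returns a new $U\in\mathcal{C}$ below the previous one, so at the end of the recursion you hold a decreasing $\om$-sequence in $\mathcal{C}$ and need a single member of $\mathcal{C}$ diagonalizing it, which is precisely the selectivity being proved. (A secondary point: as literally written, requiring both $r_n(Y_n)=a_n$ and $Y_n\le X_n/r_n(X_n)$ is contradictory for large $n$, since $a_1$ has fixed depth while $X_n\setminus r_n(X_n)$ eventually lies entirely above that depth; the condition must be imposed on the tail $Y_n/a_n$, in line with the paper's own later usage ``$X/r_k(X)\le X_k$''.)

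The non-circular route, which is what the cited lemma of \cite{MR2330595} supplies, avoids fusion. Set $D_n=\bigcap_{m\le n}(X_m\setminus r_m(X_m))\in\mathcal{U}_1$ and let $\mathcal{F}$ be the Nash-Williams family of minimal ``bad'' approximations, i.e.\ those $a$ with $a(|a|-1)\not\sse D_{|a|-1}$ but $a(m)\sse D_m$ for all $m<|a|-1$. One application of the localized Nash-Williams dichotomy gives a $U\in\mathcal{C}$ on which $\mathcal{F}$ either vanishes or traces a front; the second alternative is ruled out by exhibiting a single $Z\le U$ in $\mathcal{R}_1$ (not required to be in $\mathcal{C}$) with $Z(n)\sse D_n$ for all $n$, which exists because each $U\cap D_n$ is in $\mathcal{U}_1$ and hence contains a member of $\mathcal{C}$. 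From $\mathcal{F}|U=\emptyset$ one gets by induction that $U(k)\sse D_n$ for all $k\ge n$, so $U$ itself---already a member of $\mathcal{C}$ because the dichotomy produced it there---is the required diagonalization. Some argument of this shape is needed in place of the bookkeeping you describe.
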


We now fix the following notation for the rest of this section.

\begin{notn}\label{notn.ufonafront}
Let  $\mathcal{U}_1$ denote any ultrafilter on base set $\mathbb{T}$ which is Ramsey for $\mathcal{R}_1$ and such that for any front $\mathcal{F}$ on $\mathcal{R}_1$ and any equivalence relation $\R$ on $\mathcal{F}$,
there is a $U\in\mathcal{U}_1\cap\mathcal{R}_1$ such that 
$\R$ is canonical on $\mathcal{F}|U$.

Let $\mathcal{C}$ denote $\mathcal{U}_1\cap\mathcal{R}_1$.
Then $\mathcal{C}$ is
cofinal in $\mathcal{U}_1$.
For any front $\mathcal{F}$ on $\mathcal{C}$ and
  any $X\in\mathcal{C}$,
recall that
 $\mathcal{F}| X$ denotes
$\{a\in\mathcal{F}:a\le_{\mathrm{fin}} X\}$.
Let
\begin{equation}
\mathcal{C}\re\mathcal{F}
=\{\mathcal{F}| X:X\in\mathcal{C}\}.
\end{equation}
\end{notn}

\begin{fact}\label{fact.ufonafront}
Let $\mathcal{B}$ be any cofinal subset of $\mathcal{C}$, and let $\mathcal{F}\sse\mathcal{AR}$ be any front on $\mathcal{B}$.
Then
$\mathcal{B}\re\mathcal{F}$ generates an ultrafilter on $\mathcal{F}$.
\end{fact}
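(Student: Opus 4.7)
The plan is to verify two conditions for the family $\mathcal{B}\re\mathcal{F}=\{\mathcal{F}|X : X\in\mathcal{B}\}$: the finite intersection property with nonempty members, and the ultrafilter dichotomy that, for every $\mathcal{H}\sse\mathcal{F}$, some $\mathcal{F}|Y$ in the family is contained in $\mathcal{H}$ or disjoint from $\mathcal{H}$. Together these force $\mathcal{B}\re\mathcal{F}$ to generate an ultrafilter on $\mathcal{F}$.

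First I would check nonemptiness: since $\mathcal{F}$ is a front on $\mathcal{B}$, for each $X\in\mathcal{B}$ some initial segment $r_n(X)$ lies in $\mathcal{F}$ and hence in $\mathcal{F}|X$. Next, for FIP, given $X_1,\dots,X_k\in\mathcal{B}$ the intersection $X_1\cap\cdots\cap X_k$ lies in $\mathcal{U}_1$, since $\mathcal{B}\sse\mathcal{U}_1$ and $\mathcal{U}_1$ is a filter. Applying cofinality of $\mathcal{C}$ in $\mathcal{U}_1$ followed by cofinality of $\mathcal{B}$ in $\mathcal{C}$ produces some $Y\in\mathcal{B}$ with $Y\sse X_i$ for every $i$; recalling from the remark following Definition~\ref{defn.R_1} that on $\mathcal{R}_1$ the orderings $\sse$ and $\le_1$ coincide, we obtain $Y\le X_i$, whence $\mathcal{F}|Y\sse\bigcap_{i\le k}\mathcal{F}|X_i$.

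For the ultrafilter dichotomy I would fix $\mathcal{H}\sse\mathcal{F}$ and partition $\mathcal{F}=\mathcal{H}\cup(\mathcal{F}\setminus\mathcal{H})$. Because $\mathcal{F}$ is Nash-Williams (every front is), Definition~\ref{defn.RamseyufU1}(1)--(2), i.e. the fact that $\mathcal{C}$ satisfies the Abstract Nash-Williams Theorem, supplies a $C\in\mathcal{C}$ and an $i\in 2$ for which one side of the partition is empty when restricted by $\le_{\mathrm{fin}}$ to $C$. In either case $\mathcal{F}|C$ is contained in $\mathcal{H}$ or disjoint from it. A final appeal to cofinality of $\mathcal{B}$ in $\mathcal{C}$ yields $Y\in\mathcal{B}$ with $Y\le C$, and $\mathcal{F}|Y\sse\mathcal{F}|C$ then inherits the same dichotomy.

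The step that demands the most care is the dichotomy, because of the mild mismatch that $\mathcal{F}$ is only posited to be a front on $\mathcal{B}$ rather than on $\mathcal{C}$ as in the unlabeled Fact~(1) preceding the statement. This discrepancy is harmless here: the Ramsey dichotomy requires only that $\mathcal{F}$ be Nash-Williams, and the nonemptiness needed to make the generated filter proper comes directly from the front condition on $\mathcal{B}$, while the cofinality of $\mathcal{B}$ in $\mathcal{C}$ carries the chosen $C\in\mathcal{C}$ down to the required $Y\in\mathcal{B}$.
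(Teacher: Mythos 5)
Your proposal is correct and follows essentially the same route as the paper's proof: finite intersection property via compatibility of members of $\mathcal{B}$ (obtained from cofinality), and the ultrafilter dichotomy by applying the Nash-Williams/Ramsey property of $\mathcal{C}$ to the partition $\mathcal{F}=\mathcal{H}\cup(\mathcal{F}\setminus\mathcal{H})$ and then passing down into $\mathcal{B}$ by cofinality. Your extra observations --- that nonemptiness of each $\mathcal{F}|X$ comes from the front condition on $\mathcal{B}$, and that the dichotomy only needs $\mathcal{F}$ to be Nash-Williams --- are correct refinements of the same argument.
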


\begin{proof}
For every pair $X,Y\in\mathcal{B}$,
there is a $Z\in\mathcal{B}$ such that $Z\le X,Y$.
Thus, $\mathcal{F}| Z\sse\mathcal{F}| X\cap\mathcal{F}| Y$.
Hence, $\mathcal{B}\re\mathcal{F}$ has the finite intersection property.

Let $\mathcal{G}\sse\mathcal{F}$ 
 and  $X\in\mathcal{B}$.
Since $\mathcal{U}_1$ is Ramsey  for $\mathcal{R}_1$, 
there is a $Y\in\mathcal{C}$ such that $Y\le X$ and  either $\mathcal{F}|Y\sse \mathcal{G}$ or else $\mathcal{F}|Y\cap\mathcal{G}=\emptyset$.
Since $\mathcal{B}$ is cofinal in $\mathcal{C}$, there is a
$Z\in\mathcal{B}$ with $Z\le Y$
such that  
  either $\mathcal{F}|Z\sse \mathcal{G}$ or else $\mathcal{F}|Z\cap\mathcal{G}=\emptyset$.
In the first case, $\mathcal{G}\in\mathcal{B}\re\mathcal{F}$, and in the second case, $\mathcal{F}\setminus\mathcal{G}\in\mathcal{B}\re\mathcal{F}$.
Hence, $\mathcal{B}\re\mathcal{F}$ generates an ultrafilter on $\mathcal{F}$.
\end{proof}

\begin{fact}\label{fact.ufequal}
Suppose $\mathcal{U}$ and $\mathcal{V}$ are proper ultrafilters on the same countable base set, and for each $V\in\mathcal{V}$ there is a $U\in\mathcal{U}$ such that $U\sse V$.
Then $\mathcal{U}=\mathcal{V}$.
\end{fact}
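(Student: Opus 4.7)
The plan is to show first that $\mathcal{V}\sse\mathcal{U}$ using the hypothesis together with upward closure of $\mathcal{U}$, and then upgrade the inclusion to equality by invoking the maximality of $\mathcal{V}$ as an ultrafilter.

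For the first step, I would fix an arbitrary $V\in\mathcal{V}$ and, by hypothesis, choose $U\in\mathcal{U}$ with $U\sse V$. Since $\mathcal{U}$ is a filter on the base set, it is closed under taking supersets (within the base set), hence $V\in\mathcal{U}$. As $V$ was arbitrary, $\mathcal{V}\sse\mathcal{U}$.

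For the second step, I would argue by maximality. Since both $\mathcal{U}$ and $\mathcal{V}$ are ultrafilters on a common base set $X$, I would show that the proper inclusion $\mathcal{V}\subsetneq \mathcal{U}$ leads to a contradiction. If there were some $U\in\mathcal{U}\setminus\mathcal{V}$, then because $\mathcal{V}$ is an ultrafilter the complement $X\setminus U$ would lie in $\mathcal{V}$, and hence in $\mathcal{U}$ by the first step. But then $\mathcal{U}$ would contain both $U$ and $X\setminus U$, making $\emptyset\in\mathcal{U}$ and contradicting that $\mathcal{U}$ is proper. Thus $\mathcal{U}=\mathcal{V}$.

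There is really no obstacle here; the statement is a general fact about ultrafilters on a common base set and does not use anything specific to $\mathcal{R}_1$ or to Tukey theory. The only subtlety worth spelling out is that the hypothesis is phrased asymmetrically (only $\mathcal{V}\sse\mathcal{U}$ is obtained directly), and the reverse inclusion has to be extracted from the fact that no ultrafilter properly extends another.
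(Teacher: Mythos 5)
Your proof is correct and follows essentially the same route as the paper: both arguments hinge on combining the asymmetric hypothesis with the complement property of ultrafilters to contradict properness. The only cosmetic difference is that you first extract $\mathcal{V}\sse\mathcal{U}$ via upward closure and then run the maximality argument once, whereas the paper handles the two possible failures of equality as two symmetric cases directly from the hypothesis.
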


\begin{proof}
Without loss of generality, suppose the base set of $\mathcal{U}$ and $\mathcal{V}$ is $\om$.
Suppose that there is a  $U\in\mathcal{U}\setminus\mathcal{V}$.
Then $\om\setminus U\in\mathcal{V}$.
By hypothesis, there is a $U'\in\mathcal{U}$ such that $U'\sse \om\setminus U$; contradiction to $\mathcal{U}$ being a proper filter.
If there is a $V\in\mathcal{V}\setminus\mathcal{U}$,
 then by hypothesis, there is a $U\in\mathcal{U}$ such that $U\sse V$.
 But $\om\setminus V\in\mathcal{U}$, contradicting that $\mathcal{U}$ is a proper filter.
Thus, the fact holds.
\end{proof}

Recall that by Theorem \ref{thm.5}, every Tukey reduction from a p-point to another ultrafilter is witnessed by a continuous cofinal map.
The proof of  Theorem \ref{thm.5} actually gives more.
The continuous monotone cofinal map $g:\mathcal{P}(\bN)\ra\mathcal{P}(\bN)$ has the additional properties:
There is a function $\hat{g}:2^{<\om}\ra \mathcal{P}(\om)$ such
that, for any $X\sse\bN$,
identifying $X\cap k$ with its characteristic function with domain $k$, we have
\begin{enumerate}
\item
For each $k\in\bN$ and each $s\in 2^k$,
$\hat{g}(s)\sse k$;
\item
$s\sqsubseteq t\in 2^{<\om}$ implies
$\hat{g}(s)\sqsubseteq \hat{g}(t)$;
\item
For each $X\sse\bN$,
$g(X)=\bigcup_{k<\om}\hat{g}(X\cap k)$;
and
\item
For each $X\sse\bN$ and $k\in\bN$,
$g(X)\cap k=\hat{g}(X\cap k)$;
\item
$\hat{g}$ is monotonic;
that is,
if $k\le m\in\bN$, $s\in 2^k$, and $t\in 2^m$ are such that $s$ and $t$ are characteristic functions for  sets $x,y\sse\bN$, respectively, with $x\sse y$,
then,
 $\hat{g}(s)\sse\hat{g}(t)$.
\end{enumerate}

\begin{prop}\label{prop.W=frontuf}
Suppose $\mathcal{V}$ is a nonprincipal ultrafilter (without loss of generality on $\bN$) such that  $\mathcal{U}_1\ge_T\mathcal{V}$.
Then there is a front $\mathcal{F}$ on $\mathcal{C}$ and a function $f:\mathcal{F}\ra\bN$ such that
$\mathcal{V}=f(\lgl\mathcal{C}\re\mathcal{F}\rgl)$.
\end{prop}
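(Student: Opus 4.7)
The plan is to invoke Theorem \ref{thm.5} to replace the Tukey reduction by a continuous monotone cofinal map, and then to read off of it a front $\mathcal{F}$ on $\mathcal{C}$ and a function $f:\mathcal{F}\to\bN$. Since $\mathcal{U}_1$ is weakly Ramsey it is in particular a p-point, so Theorem \ref{thm.5} gives a continuous monotone $g:\mathcal{P}(\mathbb{T})\to\mathcal{P}(\bN)$ whose restriction to $\mathcal{U}_1$ has cofinal range in $\mathcal{V}$; continuity and monotonicity produce an accompanying monotone finite-support function $\hat g:\mathcal{AR}\to[\bN]^{<\om}$ with $g(X)=\bigcup_n\hat g(r_n(X))$ and $\hat g(a)\sse\hat g(b)$ whenever $a\sse b$. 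First I would define
\[
\mathcal{F}\;=\;\{a\in\mathcal{AR}:\hat g(a)\ne\emptyset\ \text{and}\ \hat g(r_{|a|-1}(a))=\emptyset\},
\]
the set of minimal approximations at which $\hat g$ first becomes nonempty, and set $f(a)=\min\hat g(a)$. Minimality makes $\mathcal{F}$ Nash--Williams automatically, and since $g(X)\in\mathcal{V}$ is infinite for every $X\in\mathcal{C}$ (because $\mathcal{V}$ is nonprincipal), the sequence $\hat g(r_n(X))$ is eventually nonempty and $X$ has a unique initial segment in $\mathcal{F}$; thus $\mathcal{F}$ is a front on $\mathcal{C}$.

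The key monotonicity observation to exploit is that if $a\le_{\mathrm{fin}}X$ in $\mathcal{R}_1$ then $a\sse X$ as subtrees of $\mathbb{T}$, so $\hat g(a)\sse g(X)$ and hence $f(a)\in g(X)$; in particular $f(\mathcal{F}|X)\sse g(X)$ for every $X\in\mathcal{C}$. The easy half of $\mathcal{V}=f(\lgl\mathcal{C}\re\mathcal{F}\rgl)$ follows at once: for each $V\in\mathcal{V}$ pick $X\in\mathcal{C}$ with $g(X)\sse V$ (using cofinality of $g(\mathcal{U}_1)$ in $\mathcal{V}$); then $f(\mathcal{F}|X)\sse g(X)\sse V$, which shows that $V$ belongs to the pushforward ultrafilter $f(\lgl\mathcal{C}\re\mathcal{F}\rgl)$.

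The main obstacle, and the place where I would have to work hardest, is showing $f(\mathcal{F}|X)\in\mathcal{V}$ for every $X\in\mathcal{C}$, which is what guarantees that the pushforward is a proper ultrafilter dominated by $\mathcal{V}$. Suppose toward a contradiction that $\bN\setminus f(\mathcal{F}|X)\in\mathcal{V}$ for some such $X$; using cofinality of $g(\mathcal{U}_1)$ in $\mathcal{V}$ I would pick $Z\in\mathcal{U}_1$ with $g(Z)\sse\bN\setminus f(\mathcal{F}|X)$, and then, using that $\mathcal{C}$ is cofinal in $\mathcal{U}_1$, pick $Y\in\mathcal{C}$ with $Y\sse X\cap Z$ so that $Y\le X$ and $g(Y)\sse g(Z)\sse\bN\setminus f(\mathcal{F}|X)$. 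Then $\mathcal{F}|Y\sse\mathcal{F}|X$ forces $f(\mathcal{F}|Y)\sse f(\mathcal{F}|X)$, while the monotonicity observation gives $f(\mathcal{F}|Y)\sse g(Y)\sse\bN\setminus f(\mathcal{F}|X)$, so $f(\mathcal{F}|Y)=\emptyset$; but $\mathcal{F}$ is a front on $\mathcal{C}$, so $\mathcal{F}|Y$ is nonempty and hence so is $f(\mathcal{F}|Y)$, a contradiction. With Fact \ref{fact.ufonafront} supplying that $\lgl\mathcal{C}\re\mathcal{F}\rgl$ is an ultrafilter on $\mathcal{F}$, the pushforward $f(\lgl\mathcal{C}\re\mathcal{F}\rgl)$ is then a proper ultrafilter on $\bN$ that contains every $V\in\mathcal{V}$, and Fact \ref{fact.ufequal} upgrades this containment into the desired equality $\mathcal{V}=f(\lgl\mathcal{C}\re\mathcal{F}\rgl)$.
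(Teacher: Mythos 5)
Your proof is correct and follows essentially the same path as the paper: invoke Theorem \ref{thm.5}, let $\mathcal{F}$ be the set of $\sqsubset$-minimal approximations at which $\hat g$ becomes nonempty, set $f=\min\circ\hat g$, and close with Facts \ref{fact.ufonafront} and \ref{fact.ufequal}. One structural remark: the paragraph you label ``the main obstacle'' is in fact redundant. Once Fact \ref{fact.ufonafront} makes $\lgl\mathcal{C}\re\mathcal{F}\rgl$ a proper ultrafilter on $\mathcal{F}$, its pushforward $f(\lgl\mathcal{C}\re\mathcal{F}\rgl)$ is automatically a proper ultrafilter on $\bN$, and your ``easy half'' already supplies exactly the hypothesis of Fact \ref{fact.ufequal}: for each $V\in\mathcal{V}$ there is a generator $f(\mathcal{F}|X)\sse V$. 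Since a proper ultrafilter containing (a base for) another ultrafilter must equal it, the equality $\mathcal{V}=f(\lgl\mathcal{C}\re\mathcal{F}\rgl)$ follows without separately verifying $f(\mathcal{F}|X)\in\mathcal{V}$; the contradiction argument you give there is valid, just not needed. The paper instead spends its extra effort on the observation (Claim \ref{claimn.5}) that $f(\mathcal{F}|X)$ is infinite for each $X\in\mathcal{C}$, which likewise isn't strictly needed once Fact \ref{fact.ufequal} is in hand, so both write-ups carry a little slack in the same spot.
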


\begin{proof}
By Theorem \ref{thm.5},
there is a continuous monotone cofinal map $g:\mathcal{U}_1\ra\mathcal{V}$ which is given by a monotone function $\hat{g}:2^{<\om}\ra \mathcal{P}(\om)$.
Define $\mathcal{F}\sse\mathcal{AR}$ to consist of all $r_n(X)$ such that $X\in\mathcal{C}$ and $n$ is minimal such that $\hat{g}(r_n(X))\ne\emptyset$.
Then $\mathcal{F}$ forms a front on  $\mathcal{C}$.
By Fact \ref{fact.ufonafront}, $\mathcal{C}\re\mathcal{F}$ generates an ultrafilter on the front $\mathcal{F}$ as a base set.
Define $f:\mathcal{F}\ra\bN$ by $f(a)=\min(\hat{g}(a))$, for $a\in\mathcal{F}$.
Note that $f(a)=\min(g(X))$ for any $X\in\mathcal{C}$ for which $a\sqsubset X$.
For each $X\in\mathcal{C}$,
$f(\mathcal{F}| X)=\{f(a):a\in\mathcal{F}| X\}$.
Since $\mathcal{C}\re\mathcal{F}$ generates an ultrafilter,
  its Rudin-Keisler image under $f$,  $f(\lgl\mathcal{C}\re\mathcal{F}\rgl)$, is an ultrafilter on $\bN$.

\begin{claim}\label{claimn.5}
If $\mathcal{V}$ is nonprincipal, then $f(\mathcal{F}|X)$ is infinite, for each $X\in\mathcal{C}$.
Hence, $f(\lgl\mathcal{C}\re\mathcal{F}\rgl)$ is a nonprincipal ultrafilter.
\end{claim}

\begin{proof}
Suppose $\mathcal{V}$ is non-principle.
Since $\mathcal{C}$ is a cofinal subset of $\mathcal{U}_1$ and the $g$-image of $\mathcal{C}$ is cofinal in $\mathcal{V}$,
we have that $\mathcal{V}$ equals the filter generated by the $g$-image of $\mathcal{C}$.
It follows that for all $X\in\mathcal{C}$ and $k$,
$g(X)\setminus k$ is also in $\mathcal{V}$.
Therefore, there is a $Y\in\mathcal{C}$ such that $g(Y)\sse g(X)\setminus k$.
Hence, for $n$ such that $r_n(Y)\in\mathcal{F}$, we have that $f(r_n(Y))=\min(g(Y))\ge k$.
Since $\mathcal{F}| X$ contains $r_n(Y)$ for each $Y\in\mathcal{C}$ such that $Y\le X$,
it follows that
$f$ takes on infinitely many values on $\mathcal{F}| X$, so $f(\mathcal{F}| X)$ must be infinite.
Moreover, for each $k$, there is an $X\in\mathcal{C}$ such that $k\le\min(g(X))$; so $k\cap f(\mathcal{F}| X)=\emptyset$.
Therefore, the ultrafilter generated by $f(\lgl\mathcal{C}\re\mathcal{F}\rgl)$ contains the Fr\'{e}chet filter.
Thus, $f(\lgl\mathcal{C}\re\mathcal{F}\rgl)$ is a nonprincipal ultrafilter.
\end{proof}

If $\mathcal{V}$ is nonprincipal,  then by Claim \ref{claimn.5},
$f(\lgl\mathcal{C}\re\mathcal{F}\rgl)$ is a nonprincipal  ultrafilter.
Note that for each $X\in\mathcal{C}$,
$f(\mathcal{F}| X)\sse g(X)$.
Since both $\mathcal{V}$ and $f(\lgl\mathcal{C}\re\mathcal{F}\rgl)$ are nonprincipal ultrafilters, they must be equal, by Fact \ref{fact.ufequal}.
In fact, the upwards closure of $\{f(\mathcal{F}|X):X\in\mathcal{C}\}$ is exactly $\mathcal{V}$.
\end{proof}

There is a Rudin-Keisler increasing chain of ultrafilters associated with the space $\mathcal{R}_1$, for which we now fix some notation.

\begin{notn}\label{notn.Phi_n}
Recall that $\mathcal{R}_1(n)|X$ denotes the collection
$\{Y(n):Y\le X\}$.
\begin{enumerate}
\item
For each $n<\om$,
define $\mathcal{U}_1|\mathcal{R}_1(n)$ to be the filter on the base $\mathcal{R}_1(n)$ generated by the sets $\mathcal{R}_1(n)|X$, $X\in\mathcal{C}$.
To make notation more concise, let $\mathcal{Y}_{n+1}$ denote $\mathcal{U}_1|\mathcal{R}_1(n)$.
\item
Define $\mathcal{U}_0=\pi_{T_{\lgl 0\rgl}}(\mathcal{U}_1)$, and
let $\mathcal{Y}_0=\pi_{T_{\lgl 0\rgl}}(\mathcal{Y}_1)$.
\end{enumerate}
\end{notn}

The subtle difference between $\mathcal{U}_0$ and $\mathcal{Y}_0$ is that 
$\mathcal{U}_0$ has as its base the set
 $\{\lgl\rgl\}\cup\{\lgl n\rgl:n<\om\}$, 
  whereas 
the base for $\mathcal{Y}_0$ is $\{\{\lgl\rgl,\lgl n\rgl\}:n<\om\}$.
Likewise, the base for $\mathcal{U}_1$ is $\bT$, whereas the base for $\mathcal{Y}_1$ is $\mathcal{R}_1(0)$.
We point out the following fact, as it clarifies the relationships between the ultrafilters $\mathcal{U}_0$, $\mathcal{U}_1$,
and the $\mathcal{Y}_n$, $n<\om$.

\begin{fact}\label{fact.iso}
\begin{enumerate}
\item
$\mathcal{U}_0$ is the ultrafilter  generated by the sets $\{\lgl\rgl\}\cup\{\lgl j\rgl : \lgl j\rgl\in X\}$, $X\in\mathcal{C}$.
\item
$\mathcal{U}_0\cong\mathcal{Y}_0$.
Moreover, $\pi_{T_{\lgl 0\rgl}}(\mathcal{Y}_n)=\mathcal{Y}_0$, for any $n<\om$.
\item
 $\mathcal{U}_1\cong\mathcal{Y}_1$.
\item
For any $m<n$ and $T\in\mathcal{T}(n)$ such that $T\cong \tilde{T}(m)$,
$\pi_T(\mathcal{Y}_n)=\mathcal{Y}_m$.
\end{enumerate}
\end{fact}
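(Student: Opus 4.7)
The proof is essentially an exercise in unpacking definitions. In all four parts the strategy is the same: produce a natural bijection (modulo an ultrafilter-null set when necessary) between the base of one ultrafilter and the base of the other, and verify that the bijection sends a generating family of one to a generating family of the other. No deep combinatorics is needed beyond the fact, used once in part (3), that $\mathcal{U}_1$ concentrates on the top level of $\bT$.

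\textbf{Parts (1) and (2).} Since $\mathcal{C}$ is cofinal in $\mathcal{U}_1$, the Rudin-Keisler image $\pi_{T_{\lgl 0\rgl}}(\mathcal{U}_1)$ is generated by $\{\pi_{T_{\lgl 0\rgl}}[X]:X\in\mathcal{C}\}$. Reading off the definition of the projection tree-by-tree gives $\pi_{T_{\lgl 0\rgl}}[X]=\{\lgl\rgl\}\cup\{\lgl j\rgl:\lgl j\rgl\in X\}$, which is (1). For (2), expand $\mathcal{Y}_0=\pi_{T_{\lgl 0\rgl}}(\mathcal{Y}_1)$: by definition, $\mathcal{Y}_1$ is generated by the sets $\mathcal{R}_1(0)\vert X$, and projecting each tree $\{\lgl\rgl,\lgl k\rgl,\lgl k,i\rgl\}$ onto $T_{\lgl 0\rgl}$ yields $\{\lgl\rgl,\lgl k\rgl\}$. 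The obvious bijection $\{\lgl\rgl,\lgl n\rgl\}\leftrightarrow\lgl n\rgl$ between the bases of $\mathcal{Y}_0$ and $\mathcal{U}_0$ sends generating sets to generating sets by (1), proving the isomorphism. The assertion $\pi_{T_{\lgl 0\rgl}}(\mathcal{Y}_n)=\mathcal{Y}_0$ is by exactly the same computation, since the $T_{\lgl 0\rgl}$-projection only retains the stem of each $\mathbb{T}(k)$-copy and discards the remaining leaves.

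\textbf{Part (3).} Let $L=\{\lgl n,i\rgl:i\le n,\ n<\omega\}$ denote the top level of $\bT$. I first claim $L\in\mathcal{U}_1$: the complement $\{\lgl\rgl\}\cup\{\lgl n\rgl:n<\om\}$ contains no $X\in\mathcal{C}$, so by ultrafilterness its complement $L$ belongs to $\mathcal{U}_1$. Now define $\psi:L\to\mathcal{R}_1(0)$ by $\psi(\lgl n,i\rgl)=\{\lgl\rgl,\lgl n\rgl,\lgl n,i\rgl\}$; this is a bijection. For each $X\in\mathcal{C}$ one checks directly from the definition of $\mathcal{R}_1(0)\vert X$ that $\psi[X\cap L]=\mathcal{R}_1(0)\vert X$. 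Hence $\psi$ carries the generating family of $\mathcal{U}_1$ (restricted to the $\mathcal{U}_1$-large set $L$) onto that of $\mathcal{Y}_1$, yielding $\mathcal{U}_1\cong\mathcal{Y}_1$.

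\textbf{Part (4).} This is again a direct unpacking. Given $T\cong\tilde{T}(m)$ with $m<n$, the projection $\pi_T$ maps each tree in $\mathcal{R}_1(n-1)$ (in the indexing where $\mathcal{Y}_n$ is based on that set) to a tree of the appropriate shape for the base of $\mathcal{Y}_m$. Since $\mathcal{Y}_n$ is generated by the sets $\mathcal{R}_1(n-1)\vert X$, $X\in\mathcal{C}$, the image $\pi_T(\mathcal{Y}_n)$ is generated by $\pi_T[\mathcal{R}_1(n-1)\vert X]$. The inclusion $\pi_T[\mathcal{R}_1(n-1)\vert X]\sse\mathcal{R}_1(m-1)\vert X$ is immediate from the definition; for the reverse direction, given any $Y\le X$ one readily produces a $Z\le Y$ in $\mathcal{C}$ such that every element of $\mathcal{R}_1(m-1)\vert Z$ arises as $\pi_T$ of some element of $\mathcal{R}_1(n-1)\vert X$ (extend each $m$-leaf configuration inside $Z(k)$ to an $n$-leaf configuration by choosing additional branches of $X$ at the deleted positions of $T$). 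Thus the two filters have the same upward closure, giving $\pi_T(\mathcal{Y}_n)=\mathcal{Y}_m$.

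The only subtle point in the whole argument is the verification in (3) that $L\in\mathcal{U}_1$; everything else is bookkeeping.
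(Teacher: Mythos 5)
The paper states this Fact without proof, so there is no argument of record to compare against; I will just assess your argument on its merits. Parts (1)--(3) are fine: the computation of $\pi_{T_{\lgl 0\rgl}}[X]$, the identification of the bases, and the observation that $L\in\mathcal{U}_1$ (because its complement, being leafless, contains no member of $\mathcal{C}$) are all correct, and $\psi[X\cap L]=\mathcal{R}_1(0)|X$ does hold, so $\psi$ carries $\mathcal{U}_1\re L$ onto $\mathcal{Y}_1$.

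The reverse inclusion in part (4) is where you have a genuine gap. You claim one ``readily produces a $Z\le Y$ in $\mathcal{C}$'' such that $\mathcal{R}_1(m-1)|Z\sse\pi_T[\mathcal{R}_1(n-1)|X]$. Two problems. First, the combinatorial extension is not automatic for an arbitrary $Z\le X$: if $T=T_I$ and $I$ omits, say, position $0$, then an $m$-leaf configuration $v\sse Z(k)$ whose least leaf is the least leaf of the ambient $X(j)$ cannot be realized as $\pi_T(u)$ for any $n$-leaf $u\sse X(j)$ --- you need $Z(k)$ to sit ``spread out'' inside $X(j)$, with at least $n$ spare leaves of $X(j)$ below, above, and between the leaves of $Z(k)$. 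Second, and more seriously, even granting that one can construct such a spread-out $Z\le X$ in $\mathcal{R}_1$, you cannot arrange by construction that $Z\in\mathcal{C}=\mathcal{U}_1\cap\mathcal{R}_1$: membership in the ultrafilter is not under your control, and the generators of $\mathcal{Y}_m$ are only the sets $\mathcal{R}_1(m-1)|Z$ with $Z\in\mathcal{C}$. The clean repair is to reverse the logic: your forward inclusion already shows that $\pi_T(\mathcal{Y}_n)$ is a proper filter containing every generator of $\mathcal{Y}_m$, hence containing $\mathcal{Y}_m$; since $\mathcal{Y}_m$ is an ultrafilter on that base (for $m\ge 2$ this is Proposition \ref{prop.structureU_R(n)}(3), whose proof is independent of this Fact; for $m\le 1$ use parts (2),(3)), the two filters coincide. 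Alternatively, show directly that no generator $\mathcal{R}_1(m-1)|Z$ of $\mathcal{Y}_m$ is disjoint from $\pi_T[\mathcal{R}_1(n-1)|X]$ by passing to a common $W\in\mathcal{C}$ with $W\le X,Z$ and projecting an $n$-leaf subtree of some $W(k)$. The same remark applies, more mildly, to the ``moreover'' clause of (2), where the image of $\mathcal{R}_1(n-1)|X$ under $\pi_{T_{\lgl 0\rgl}}$ only yields a cofinite subset of the corresponding generator of $\mathcal{Y}_0$; there the fix is just that $\mathcal{U}_1$ is nonprincipal, so $\mathcal{C}$ contains some $Z\le X/r_{n-1}(X)$.
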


\begin{prop}\label{prop.structureU_R(n)}
\begin{enumerate}
\item
$\mathcal{U}_0$ is a Ramsey ultrafilter.
\item
$\mathcal{U}_1$ is a weakly Ramsey ultrafilter which is not Ramsey, and which satisfies the $(1,k)$ Ramsey partition property for each $k\ge 1$.
\item
For each $n\ge 2$,
 $\mathcal{Y}_n$ is an ultrafilter, and moreover is a  rapid p-point.
\item
$\mathcal{U}_0<_{RK}\mathcal{U}_1<_{RK}\mathcal{Y}_2<_{RK}\mathcal{Y}_3<_{RK}\dots$.
\item
For each $n\ge 1$, $\mathcal{Y}_n\equiv_T\mathcal{U}_1$.
\end{enumerate}
\end{prop}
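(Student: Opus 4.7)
The plan is to dispatch the five items in order, leveraging throughout that $\mathcal{C}=\mathcal{U}_1\cap\mathcal{R}_1$ is a base for $\mathcal{U}_1$ which satisfies the Abstract Nash-Williams Theorem (Definition \ref{defn.RamseyufU1}) and is selective for $\mathcal{R}_1$ (Fact 4.2(2)), together with the canonization Theorems \ref{thm.original} and \ref{thm.PRR(1)} and Blass's Theorem \ref{thm.Blass}.

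For item (1), given a $2$-coloring $c:[\bN]^2\ra 2$ of the base of $\mathcal{U}_0$, partition the front $\mathcal{AR}_2$ by $\tilde c(b)=c(\{\pi_{T_{\lgl 0\rgl}}(b(0)),\pi_{T_{\lgl 0\rgl}}(b(1))\})$; Ramseyness of $\mathcal{C}$ yields $X\in\mathcal{C}$ with $\mathcal{AR}_2|X$ monochromatic, and $\pi_{T_{\lgl 0\rgl}}(X)\in\mathcal{U}_0$ is then homogeneous. For item (2), the $(1,k)$ Ramsey partition property is Laflamme's Proposition 1.6 in \cite{Laflamme89}, which transfers verbatim since $\mathcal{R}_1$ is dense in $\bP_1$; weak Ramseyness is the case $k=2$. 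To show $\mathcal{U}_1$ is not Ramsey, color $\{x,y\}\sse\bT$ by whether $x,y$ share a parent in $\bT$: every $X\in\mathcal{C}$ contains both sibling pairs $\{\lgl k,0\rgl,\lgl k,1\rgl\}\sse X(j)$ and cross pairs $\{\lgl k\rgl,\lgl l,0\rgl\}$ with $k\ne l$, so no member of $\mathcal{C}$ can be homogeneous.

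For item (3), $\mathcal{Y}_n$ is an ultrafilter because any partition $\mathcal{G}\cup(\mathcal{R}_1(n)\setminus\mathcal{G})$ of $\mathcal{R}_1(n)$ pulls back via $b\mapsto b(n)$ to a partition of the front $\mathcal{AR}_{n+1}$, to which one applies Ramseyness of $\mathcal{C}$ to obtain $X\in\mathcal{C}$ with $\mathcal{R}_1(n)|X$ contained in one side. For the p-point property, given $V_0\contains V_1\contains\cdots$ in $\mathcal{Y}_n$, select $X_m\in\mathcal{C}$ with $\mathcal{R}_1(n)|X_m\sse V_m$ and apply selectivity of $\mathcal{C}$ to obtain $X\in\mathcal{C}$ with $X\le X_m/r_m(X_m)$ for every $m$; then $\mathcal{R}_1(n)|X\setminus V_m\sse\mathcal{R}_1(n)|r_m(X_m)$ is finite. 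Rapidity is a standard diagonalization inside $\mathcal{C}$: enumerate $\mathcal{R}_1(n)$ by $\bT$-depth of the root, and build $X\in\mathcal{C}$ with $\depth_{\bT}(X(j))$ growing fast enough relative to $f$ that $\mathcal{R}_1(n)|X$ has at most $m$ elements indexed below $f(m)$.

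For item (4), the RK-projections are supplied by Fact \ref{fact.iso}: $\pi_{T_{\lgl 0\rgl}}$ gives $\mathcal{U}_0\le_{RK}\mathcal{U}_1\cong\mathcal{Y}_1$, and choosing $T\in\mathcal{T}(n)$ with $T\cong\tilde{T}(n-1)$ gives $\mathcal{Y}_{n-1}\le_{RK}\mathcal{Y}_n$. Strictness $\mathcal{U}_0<_{RK}\mathcal{U}_1$ is immediate from (1) and (2). For the remaining strictnesses $\mathcal{U}_1<_{RK}\mathcal{Y}_2<_{RK}\mathcal{Y}_3<_{RK}\cdots$, suppose $\mathcal{Y}_n\cong\mathcal{Y}_{n-1}$ via $\phi$; composing $\phi$ with the canonical projection $\pi_T:\mathcal{R}_1(n)\twoheadrightarrow\mathcal{R}_1(n-1)$ yields a self-map of the base of $\mathcal{Y}_{n-1}$ sending $\mathcal{Y}_{n-1}$ to itself, hence the identity on a set in $\mathcal{Y}_{n-1}$; but $\pi_T$ is not injective on $\mathcal{R}_1(n)|X$ for any $X\in\mathcal{C}$ (it collapses distinct trees sharing a projected subtree), contradicting injectivity on a positive set. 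I expect this strictness argument to be the main technical step, since one must rule out all exotic RK maps, and the cleanest way to do so uses Corollary \ref{cor.canonR(n)} to show any such map restricts to a projection of the form $\pi_T$ on a positive set and then appeals to the non-injectivity just noted.

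For item (5), $\mathcal{Y}_n\ge_{RK}\mathcal{Y}_1\cong\mathcal{U}_1$ established in (4) already yields $\mathcal{Y}_n\ge_T\mathcal{U}_1$. For the reverse, define $g:\mathcal{C}\ra\mathcal{Y}_n$ by $g(X)=\mathcal{R}_1(n)|X$: this is monotone, and $\{\mathcal{R}_1(n)|X:X\in\mathcal{C}\}$ is a base for $\mathcal{Y}_n$, so any cofinal subset of $\mathcal{C}$ has image cofinal in $\mathcal{Y}_n$. Since cofinal maps between cofinal subsets witness Tukey reductions (the fact noted just after the definition of Tukey reducibility in Section \ref{sec.intro}), this delivers $\mathcal{U}_1\ge_T\mathcal{Y}_n$ and completes the equivalence.
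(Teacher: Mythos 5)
Your plan is correct and follows the paper's overall strategy (base $\mathcal{C}=\mathcal{U}_1\cap\mathcal{R}_1$, Ramseyness and selectivity of $\mathcal{C}$, the canonization Corollary \ref{cor.canonR(n)}, and the explicit cofinal maps for the Tukey equivalences). There are, however, a few places where you depart from the paper, and one small but persistent slip worth flagging.

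\textbf{Indexing.} Throughout (3) and (4) you treat the base of $\mathcal{Y}_n$ as $\mathcal{R}_1(n)$, but Notation \ref{notn.Phi_n} sets $\mathcal{Y}_{n+1}=\mathcal{U}_1|\mathcal{R}_1(n)$, so the base of $\mathcal{Y}_n$ (for $n\ge 1$) is $\mathcal{R}_1(n-1)$. This is purely an off-by-one; the logic is unchanged, but it should be cleaned up so that the objects $b(n)$, $\pi_T$, and $\mathcal{R}_1(n)|X$ line up with the front $\mathcal{AR}_{n}$ rather than $\mathcal{AR}_{n+1}$.

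\textbf{Genuine differences.} For (1) and the ``not Ramsey'' half of (2), you argue directly from the topological Ramsey space machinery (pulling a coloring back to $\mathcal{AR}_2$, and exhibiting a two-colored pair configuration inside every $X\in\mathcal{C}$), whereas the paper just cites Laflamme's results via the density of $\mathcal{R}_1$ in $\bP_1$. Your route is self-contained and in some ways preferable, though it leaves the $(1,k)$ Ramsey partition property and weak Ramseyness to Laflamme as the paper does. For the RK-strictness in (4), your argument — compose a hypothetical isomorphism $\phi$ with the canonical projection $\pi_T$, observe that the resulting RK endomorphism must agree with the identity on a positive set, and derive a contradiction from the fact that any positive set of $\mathcal{Y}_{n+1}$ contains some $\mathcal{R}_1(n)|X$ on which $\pi_T$ is visibly not injective — is actually more explicit than the paper's terse ``It follows that $\theta(\mathcal{Y}_n)\ne\mathcal{Y}_{n+1}$,'' and it supplies exactly the reasoning that the paper leaves implicit. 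It would be worth also recording the reduction ``$\mathcal{Y}_{n+1}\not\le_{RK}\mathcal{Y}_n$ reduces to $\mathcal{Y}_{n+1}\not\cong\mathcal{Y}_n$'' (using $\mathcal{Y}_n\le_{RK}\mathcal{Y}_{n+1}$), since that is the step that lets you pass from ruling out a surjection to ruling out a bijection. For (5) you invoke RK-implies-Tukey for the direction $\mathcal{U}_1\le_T\mathcal{Y}_n$; the paper instead gives a second explicit monotone cofinal map $\mathcal{R}_1(n-1)|X\mapsto X$. Both work; yours is shorter, the paper's is more concrete. The p-point and rapidity arguments in (3) are essentially the paper's, though ``standard diagonalization inside $\mathcal{C}$'' compresses the paper's explicit appeal to selectivity; you should say the diagonalization lands in $\mathcal{C}$ because selectivity of $\mathcal{C}$ (Fact 4.2(2)) lets you thread through a descending sequence of members of $\mathcal{C}$, not merely of $\mathcal{R}_1$.
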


\begin{proof}
Since $\mathcal{R}_1$ is dense in Laflamme's forcing $\bP_1$, (1) and (2) follow  from  Theorem
\ref{thm.Laflammethms}.

(3) Let $n\ge 2$.
It is clear that $\mathcal{Y}_n$ is a filter.
Let $V$ be any subset of  $\mathcal{R}_1(n-1)$, and
let $\mathcal{H}=\{a\in\mathcal{AR}_{n}: a(n-1)\in V\}$.
Since  $\mathcal{U}_1$ is Ramsey for $\mathcal{R}_1$, there is an $X\in\mathcal{C}$ such that either
$\mathcal{AR}_{n}|X\sse\mathcal{H}$ or else
$\mathcal{AR}_{n}|X\cap\mathcal{H}=\emptyset$.
In the first case, $V\in\mathcal{Y}_n$ and in the second case, $\mathcal{R}_1(n-1)\setminus V\in\mathcal{Y}_n$.
Thus, $\mathcal{Y}_n$ is an ultrafilter.

Suppose $U_0\contains U_1\contains\dots$ is a decreasing sequence of elements of $\mathcal{Y}_n$.
For each $k<\om$, there is some $X_k\in\mathcal{R}_1$ for which $\mathcal{R}_1(n-1)|X_k\sse U_k$.
We may take $(X_k)_{k<\om}$ to be a $\le$-decreasing sequence.
Since $\mathcal{U}_1$ is selective for $\mathcal{R}_1$, there is an $X\in\mathcal{C}$ such that $X/r_k(X)\le X_k$, for each $k<\om$.
Then $\mathcal{R}_1(n-1)|X\sse^*\mathcal{R}_1(n-1)|X_k$, for each $k<\om$.
Thus, $\mathcal{Y}_n$ is a p-point.

To show that $\mathcal{Y}_n$ is rapid, let $h:\om\ra\om$ be a strictly increasing function.
Linearly order $\mathcal{R}_1(n-1)$ so that all members of $\mathcal{R}_1(n-1)|\mathbb{T}(k)$ appear before all members of $\mathcal{R}_1(n-1)|\mathbb{T}(k+1)$ for all $k\ge n-1$.
For any tree $u$, let $\min(\pi_{T_{\lgl 0\rgl}}(u))$ denote the smallest $l$ such that $\lgl l\rgl\in \pi_{T_{\lgl 0\rgl}}(u)$.
For each $X\in\mathcal{R}_1$, there is a $Y\le X$ such that
$\min(\pi_{T_{\lgl 0\rgl}}(Y(n-1)))>h(1)$, $\min(\pi_{T_{\lgl 0\rgl}}(Y(n)))>h(1+|\mathcal{R}_1(n-1)|\mathbb{T}(n)|)$,
 and in general, for $k>n$,
\begin{equation}
\min(\pi_{T_{\lgl 0\rgl}}(Y(k)))
>h(\Sigma_{n\le i\le k} |\mathcal{R}_1(n-1)|\mathbb{T}(i)|).
\end{equation}
Since $\mathcal{U}_1$  is selective for $\mathcal{R}_1$, there is a $Y\in\mathcal{C}$ with this property, which yields that
$\mathcal{Y}_n$ is rapid.

(4)
First,
$\mathcal{Y}_0\cong\mathcal{U}_0\le_{RK}\mathcal{U}_1\cong\mathcal{Y}_1$.
Now suppose  $1\le n<\om$.
$\mathcal{Y}_n\le_{RK}\mathcal{Y}_{n+1}$ is witnessed by the map $\pi_{\tilde{T}(n-1)}:\mathcal{R}_1(n)\ra\mathcal{R}_1(n-1)$,
since $\pi_{\tilde{T}(n-1)}(\mathcal{Y}_{n+1})=\mathcal{Y}_n$.
Next we show that the only Rudin-Keisler predecessors of $\mathcal{Y}_n$ are isomorphic to $\mathcal{Y}_k$ for some $k\le n$, and that $\mathcal{Y}_{n+1}\not\le_{RK}\mathcal{Y}_n$.
Let $\theta:\mathcal{R}_1(n-1)\ra\bN$ be any function.
By the Corollary \ref{cor.canonR(n)} to the Canonization Theorem
and  $\mathcal{U}_1$ being Ramsey for $\mathcal{R}_1$,
there is an $X\in\mathcal{C}$
and a subtree $T\sse\tilde{T}(n-1)$
such that for all $Y,Z\in\mathcal{C}|X$,
$\theta(Y(n-1))=\theta(Z(n-1))$ iff $Y(n-1) \E_T Z(n-1)$.
It follows that $\theta(\mathcal{Y}_n)$ is isomorphic to $\mathcal{Y}_k$ for some $k\le n$.

Similarly,
if we let $\theta:\mathcal{R}_1(n-1)\ra\mathcal{R}_1(n)$ be any function,
by the Canonization Theorem and $\mathcal{U}_1$ being Ramsey for $\mathcal{R}_1$,
there is an $X\in\mathcal{C}$
and a subtree $T\sse\tilde{T}(n-1)$
such that for all $Y,Z\in\mathcal{C}|X$,
$\theta(Y(n-1))=\theta(Z(n-1))$ iff $Y(n-1) \E_T Z(n-1)$.
It follows that $\theta(\mathcal{Y}_n)\ne \mathcal{Y}_{n+1}$.

(5)
Let $n>1$.
Define a  map $g:\mathcal{Y}_n|\mathcal{C}\ra\mathcal{C}$ by
$g(\mathcal{R}_1(n-1)|X)=X$, for each $X\in\mathcal{C}$.
$g$ is well-defined, since from the set $\mathcal{R}_1(n-1)|X$ one can unambiguously reconstruct $X$.
Thus, $g$ is a monotone cofinal map from a cofinal subset of $\mathcal{Y}_n$ into a cofinal subset of $\mathcal{U}_1$, so $g$ witnesses that $\mathcal{U}_1\le_T\mathcal{Y}_n$.
On the other hand, $\mathcal{Y}_n$ is generated by the image of the monotone cofinal map $g:\mathcal{C}\ra\mathcal{R}_1(n-1)|\mathcal{C}$ defined by
$g(X)=\mathcal{R}_1(n-1)|X$.
Thus, $\mathcal{Y}_n\le_T\mathcal{U}_1$.
Therefore, $\mathcal{Y}_n\equiv_T\mathcal{U}_1$.
\end{proof}

\begin{rem}\label{rem.Un+1Un}
In fact, (4) in the above theorem will be strengthened: It will follow from Theorem \ref{thm.TukeyU_1} that, for each $n<\om$, the only nonprincipal ultrafilters Rudin-Keisler reducible to $\mathcal{Y}_n$ are those which are isomorphic to $\mathcal{Y}_k$ for some $k\le n$.
Thus, the ultrafilters $\mathcal{U}_0<_{RK}\mathcal{U}_1<_{RK}\mathcal{Y}_2<_{RK}\dots$ form a maximal chain of isomorphism types 
among all nonprincipal ultrafilters with Tukey type less than or equal to the Tukey type of $\mathcal{U}_1$.
\end{rem}

\begin{thm}\label{thm.TukeyU_1}
Suppose $\mathcal{U}_1$ is Ramsey for $\mathcal{R}_1$ and
$\mathcal{V}$ is a nonprincipal ultrafilter and  $\mathcal{U}_1\ge_T\mathcal{V}$.
Then $\mathcal{V}$ is isomorphic to an ultrafilter of  $\vec{\mathcal{W}}$-trees, where
$\hat{\mathcal{S}}\setminus\mathcal{S}$ is a well-founded tree, $\vec{\mathcal{W}}=(\mathcal{W}_s:s\in\hat{\mathcal{S}}\setminus\mathcal{S})$, 
  and each $\mathcal{W}_s$ is exactly one of the   $\mathcal{Y}_n$, $n<\om$.
\end{thm}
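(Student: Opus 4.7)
The plan is to combine Proposition \ref{prop.W=frontuf} with the canonization Theorem \ref{thm.PRR(1)} and then read off the tree-of-ultrafilters structure from the canonical inner Sperner map. First, since $\mathcal{U}_1$ is a p-point and $\mathcal{U}_1\ge_T\mathcal{V}$, Proposition \ref{prop.W=frontuf} yields a front $\mathcal{F}\sse\mathcal{AR}$ on $\mathcal{C}$ and a map $f:\mathcal{F}\ra\bN$ such that $\mathcal{V}$ is (isomorphic to) the Rudin-Keisler image $f(\lgl\mathcal{C}\re\mathcal{F}\rgl)$. Let $\R$ be the equivalence relation on $\mathcal{F}$ given by $a\R b$ iff $f(a)=f(b)$. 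Applying Theorem \ref{thm.PRR(1)} produces an inner Sperner map $\vp$ on $\mathcal{F}$ and some $C\in\mathcal{R}_1$ canonizing $\R$ on $\mathcal{F}|C$. Because $\mathcal{U}_1$ is Ramsey for $\mathcal{R}_1$, the standard argument (via Notation \ref{notn.ufonafront} and Fact \ref{fact.ufonafront}) lets us refine $C$ to lie in $\mathcal{C}$. Setting $\mathcal{S}=\vp[\mathcal{F}|C]$, the bijection induced by $\vp$ between equivalence classes and images  identifies $\mathcal{V}$ with the ultrafilter on $\mathcal{S}$ generated by $\{\vp[\mathcal{F}|X]:X\in\mathcal{C},\ X\le C\}$.

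Next, define $\hat{\mathcal{S}}$ to be the closure of $\mathcal{S}$ under initial segments of the form $\vp(r_i(a))$ for $a\in\mathcal{F}|C$ and $i\le|a|$. Since $\mathcal{F}$ is a front (hence has a well-founded rank in the topological Ramsey sense), the tree $\hat{\mathcal{S}}\setminus\mathcal{S}$ is well-founded under $\sqsubseteq$. For each $s\in\hat{\mathcal{S}}\setminus\mathcal{S}$, let $n=|s|$ and let $T_s\in\mathcal{T}(n)$ be the canonical tree produced in Claim \ref{claim.E} governing $\E_{r_n(a)}$-mixing for any $a$ with $\vp(r_n(a))=s$. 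The immediate successors of $s$ in $\hat{\mathcal{S}}$ are then precisely the sets of the form $s\cup \pi_{T_s}(u)$ as $u$ ranges over $\mathcal{R}_1(n)|X/s$ for $X\in\mathcal{C}$ with $X\le C$. Define $\mathcal{W}_s$ to be the filter on these successors generated by $X\in\mathcal{C}$; by Claim \ref{claim.E} together with Corollary \ref{cor.canonR(n)} and Fact \ref{fact.iso}(4), $\mathcal{W}_s$ is isomorphic to $\pi_{T_s}(\mathcal{Y}_{n+1})$, which is exactly one of the $\mathcal{Y}_m$ with $m\le n$.

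Third, assemble these data: $\vec{\mathcal{W}}=(\mathcal{W}_s:s\in\hat{\mathcal{S}}\setminus\mathcal{S})$ and build the ultrafilter $\mathcal{V}^{\vec{\mathcal{W}}}$ of $\vec{\mathcal{W}}$-trees by the usual Fubini-style recursion along $\hat{\mathcal{S}}$: a set $A\sse\mathcal{S}$ belongs to $\mathcal{V}^{\vec{\mathcal{W}}}$ iff for $\mathcal{W}_{\emptyset}$-many first-level successors $s_1$, for $\mathcal{W}_{s_1}$-many extensions $s_2\sqsupset s_1$, and so on, the eventual leaf lies in $A$. The proof that $\mathcal{V}\cong \mathcal{V}^{\vec{\mathcal{W}}}$ is by induction along the well-founded tree $\hat{\mathcal{S}}\setminus\mathcal{S}$: at a node $s$, the inductive hypothesis gives the correct description of the ultrafilter induced on sequences extending $s$, and the one-step computation uses Claim \ref{claim.E} to translate ``$\mathcal{U}_1$-largeness above $s$'' into ``$\mathcal{W}_s$-largeness of the successor block''. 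The base case (leaves in $\mathcal{S}$) is immediate from the definition of $\vp$, and Fact \ref{fact.ufequal} upgrades the inclusion of generating sets to genuine isomorphism of ultrafilters.

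The main obstacle is the third step: verifying that at every internal node $s$ the ``local'' ultrafilter $\mathcal{W}_s$ is exactly $\pi_{T_s}(\mathcal{Y}_{n+1})$, uniformly as $s$ varies. This requires a coherence argument showing that the trees $T_{r_i(a)}$ appearing in the canonization depend only on $\vp(r_i(a))$ rather than on $a$ itself, which in turn requires an additional Ramsey-style thinning of $C$ of the same flavor as the proof of Theorem \ref{thm.original} (repeated applications of the Abstract Ellentuck Theorem to absorb the finite choices $T_{r_i(a)}\in\mathcal{T}(i)$). Once coherence is in place, the inductive verification of $\mathcal{V}\cong \mathcal{V}^{\vec{\mathcal{W}}}$ is routine, and the well-foundedness of $\hat{\mathcal{S}}\setminus\mathcal{S}$ guarantees termination.
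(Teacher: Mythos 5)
Your proposal follows essentially the same route as the paper's proof: pass from the continuous cofinal map to a front and function $f$ via Proposition~\ref{prop.W=frontuf}, canonize $f$ via Theorem~\ref{thm.PRR(1)} to get $\vp$, form $\mathcal{S}$, $\hat{\mathcal{S}}$, and the node ultrafilters $\mathcal{W}_s$, identify each $\mathcal{W}_s$ with some $\mathcal{Y}_m$, and reconstruct $\mathcal{V}$ as the ultrafilter of $\vec{\mathcal{W}}$-trees. The ``coherence'' obstacle you flag is precisely what the paper handles (its claim that $\mathcal{W}_s$ is independent of the witnessing pair $(t,j)$ with $\vp(r_j(t))=s$, proved using that $\mathcal{U}_1$ is Ramsey for $\mathcal{R}_1$ together with the isomorphism of the $T_{r_j(t)}$'s coming out of the canonization), and the only small caution is that $|s|$ is not an intrinsic notion for $s\in\hat{\mathcal{S}}\setminus\mathcal{S}$ -- one must track the level through a chosen $t\in\mathcal{F}$ and $j<|t|$ maximal with $\vp(r_j(t))=s$, exactly as the paper does.
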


\begin{proof}
The proof is structured as follows.
We will show there is a front $\mathcal{F}$ on $\mathcal{C}$,  a function $f:\mathcal{F}\ra\bN$, and a $C\in\mathcal{C}$ such that, letting  $\mathcal{S}=\{\vp(t):t\in\mathcal{F}|C\}$, the following hold.
\begin{enumerate}
\item
The equivalence relation induced by $f$ on $\mathcal{F}|C$ is canonical.
\item
$\mathcal{V}=f(\lgl\mathcal{C}\re\mathcal{F}\rgl)$.
\item
$\mathcal{W}$, the filter on base set $\mathcal{S}$ generated by $\vp(\mathcal{C}\re\mathcal{F})$, is an ultrafilter, and $\mathcal{W}\cong\mathcal{V}$.
\item
$\hat{\mathcal{S}}$, the set of all initial segments of elements of $\mathcal{S}$, forms a tree with no infinite branches.
\item
  $\mathcal{W}$ is  the
ultrafilter on $\mathcal{S}$ generated by the  
   $\vec{\mathcal{W}}$-trees,
where  $\vec{\mathcal{W}}=(\mathcal{W}_s:s\in\hat{\mathcal{S}}\setminus\mathcal{S})$, and
for each $s\in\hat{\mathcal{S}}\setminus\mathcal{S}$, the ultrafilter $\mathcal{W}_{s}$ equals  $\mathcal{Y}_n$ for some $n<\om$.
\end{enumerate}

Since $\mathcal{U}_1$ is a p-point, by Theorem \ref{thm.5}
there is a continuous monotone cofinal map $g:\mathcal{P}(\bT)\ra\mathcal{P}(\bN)$ such that $g:\mathcal{U}_1\ra\mathcal{V}$ is a cofinal map.
Moreover, $g\re\mathcal{R}_1$ is produced by a map $\hat{g}:\mathcal{AR}\ra \mathcal{P}(\om)$ of the sort discussed just below Theorem \ref{thm.5}.
Let $\mathcal{F}$ consist of all $r_n(Y)$ such that $Y\in\mathcal{R}_1$ and $n$ is minimal such that $\hat{g}(r_n(Y))\ne\emptyset$.
By the properties of $\hat{g}$,
$\min(\hat{g}(r_n(Y)))=\min(g(Y))$.
By its definition,
$\mathcal{F}$ is a front on $\mathcal{R}_1$, hence is a front on $\mathcal{C}$.
Define a new function $f:\mathcal{F}\ra\bN$ by
$f(b)=\min(\hat{g}(b))$, for each $b\in\mathcal{F}$.
By Theorem \ref{thm.PRR(1)},
for each $X\in\mathcal{R}_1$, there is a $Y\le X$ such that the map $f\re(\mathcal{F}|Y)$ is canonical.

There is a $C\in\mathcal{C}$ such that the equivalence relation induced by $f\re(\mathcal{F}|C)$ is canonical.
For by the construction of $\mathcal{U}_1$, 
given any front $\mathcal{F}'$ and any equivalence relation $\R'$ on $\mathcal{F}'$, 
there is a $Z\in\mathcal{C}$ such that 
 $\R'$ is canonical on $\mathcal{F}'|Z$.
By Proposition \ref{prop.W=frontuf},
$\mathcal{V}=f(\lgl \mathcal{C}\re\mathcal{F}\rgl)$.
If $\mathcal{F}=\{\emptyset\}$, then $\mathcal{V}$ is a principal ultrafilter, so we may assume that $\mathcal{F}\ne\{\emptyset\}$.

From now on we abuse notation and let $\mathcal{F}$ denote $\mathcal{F}|C$ and $\mathcal{C}$ denote $\mathcal{C}|C$.
Let  $\mathcal{S}=\{\vp(t):t\in\mathcal{F}\}$.
Define $\mathcal{W}$ to be the filter on base set $\mathcal{S}$ generated by the sets
$\{\vp(t):t\in \mathcal{F}|X\}$,  $X\in\mathcal{C}$.
For $X\in\mathcal{C}$, let $\mathcal{S}|X$ denote
$\{\vp(t):t\in\mathcal{F}|X\}$.

\begin{claim}\label{claim.thmF,T1}
$\mathcal{W}$ is an ultrafilter.
\end{claim}

\begin{proof}
Given $X,Y\in\mathcal{C}$, there is a $Z\in\mathcal{C}$ such that $Z\le X,Y$; so $\{\vp(t):t\in\mathcal{F}|Z\}\sse\{\vp(t):t\in\mathcal{F}|X\}\cap\{\vp(t):t\in\mathcal{F}|Y\}$.
Thus, $\mathcal{W}$ is a filter.

Let $S\sse\mathcal{S}$ and $X\in\mathcal{C}$ be given.
Let $\mathcal{H}=\{t\in\mathcal{F}:\vp(t)\in S\}$.
Since $\mathcal{U}_1$ is Ramsey for $\mathcal{R}_1$, $\mathcal{C}$ contains  a $Y$ such
that either $\mathcal{F}|Y\sse\mathcal{H}$ or else $\mathcal{F}|Y\cap\mathcal{H}=\emptyset$.
In the first case,
 $\mathcal{S}|Y:=\{\vp(t):t\in\mathcal{F}|Y\}\sse S$;
 so $S\in\mathcal{W}$.
In the second case,  $\mathcal{S}|Y\cap S=\emptyset$; hence
 $\mathcal{S}\setminus S$ is in $\mathcal{W}$.
Therefore, $\mathcal{W}$ is an ultrafilter.
\end{proof}

\begin{claim}\label{claim.W=V}
$\mathcal{W}$ is isomorphic to $\mathcal{V}$.
\end{claim}

\begin{proof}
Define $\theta:\mathcal{S}\ra \om$ by $\theta(\vp(t))=f(t)$, for each $t\in \mathcal{F}$.
Since $f$ is canonical on $\mathcal{F}$,
for all $t,t'\in\mathcal{F}$, $\vp(t)=\vp(t')$ if and only if $f(t)=f(t')$.
Thus, $\theta$ is well-defined.
Moreover, whenever $\theta(\vp(t))=\theta(\vp(t'))$, then $f(t)=f(t')$, which implies $\vp(t)=\vp(t')$;
so $\theta$ is 1-1.

For each $W\in\mathcal{W}$,
there is an $X\in\mathcal{C}$ such that $\mathcal{S}|X\sse W$.
Then $\theta(W)\contains \theta(\mathcal{S}|X) = f(\mathcal{F}|X)\in\mathcal{V}$.
So the image of $\mathcal{W}$ under $\theta$ is contained in $\mathcal{V}$.
Further, the image of $\mathcal{W}$ under $\theta$ is cofinal in $\mathcal{V}$.
For letting $V\in\mathcal{V}$, there is an $X\in\mathcal{C}$ such that $f(\mathcal{F}|X)\sse V$.
Then $\mathcal{S}|X=\{\vp(t):t\in\mathcal{F}|X\}\sse V\in\mathcal{V}$, and moreover,  $\mathcal{S}|X\sse V$.
Thus, $\theta(\mathcal{W})=\mathcal{V}$.
\end{proof}

Let $\hat{\mathcal{S}}$ denote the collection of all initial segments of elements of $\mathcal{S}$.
Precisely,
let $\hat{\mathcal{S}}$ be the collection of all $\vp(t)\cap r_i(t)$ such that $t\in\mathcal{F}$, $i\le |t|$, and if $i<|t|$ then $T_{r_i(t)}\ne T_{\lgl\rgl}$.
 $\hat{\mathcal{S}}$ forms  a tree under the end-extension ordering.

Recall that for $s\in\hat{\mathcal{S}}\setminus\mathcal{S}$,
for all $t,t'\in\mathcal{F}$,
if $j<|t|$ is maximal such that $\vp(r_j(t))=s$ and $j'$ is maximal such that $\vp(r_{j'}(t'))=s$,
then $T_{r_j(t)}$ is isomorphic to $T_{r_{j'}(t')}$, and these are both not $T_{\lgl\rgl}$.
Define $\mathcal{W}_s$ to be the filter
generated by the sets
$\{\vp_{r_j(t)}(u): u\in\mathcal{R}_1(j)|X/t\}$,
for all $t\in\mathcal{F}$ such that $s\sqsubseteq \vp(t)$ and
$j<|t|$ maximal such that $\vp(r_j(t))=s$, and all $X\in\mathcal{C}$.
Note that if $T_{r_j(t)}=T_{\lgl 0\rgl}$, then
the base set for $\mathcal{W}_s$ is  $\{\{\lgl\rgl,\lgl k\rgl\} :k<\om\}$;
and if $T_{r_j(t)}=T_I$, where $0<|I|=n$,
then the base set for $\mathcal{W}_s$ is
 $\mathcal{R}_1(n-1)$.

\begin{claim}\label{claim.thmF,T2}
For each $s\in\hat{\mathcal{S}}\setminus\mathcal{S}$,
$\mathcal{W}_{s}$ is an ultrafilter which is generated by the collection of  $\{\vp_{r_j(t)}(u):u\in\mathcal{R}_1(j)|X\}, X\in\mathcal{C}$, for any (all) $t\in\mathcal{F}$ and $j<|t|$ maximal such that $\vp(r_j(t))=s$.
\end{claim}

\begin{proof}
Let
 $s\in\hat{\mathcal{S}}\setminus\mathcal{S}$.
First we check that $\mathcal{W}_s$ is a nonprincipal filter.
Suppose $t,t'\in\mathcal{F}$ and $j,j'$ are maximal such that $\vp(r_j(t))=\vp(r_{j'}(t'))=s$.
Let $X\in\mathcal{C}$ and let
$S=\{\vp_{r_j(t)}(u):u\in\mathcal{R}_1(j)|X/t\}$ and
$S'=\{\vp_{r_{j'}(t')}(u):u\in\mathcal{R}_1(j')|X/t'\}$.
We claim that $S\cap S'\ne\emptyset$.
Let
\begin{equation}
\mathcal{H}=\{a\in\mathcal{AR}_{j+1}:\exists v\in\mathcal{R}_1(j')|X/(t,t')\, (\vp_{r_j(t)}(a(j))=\vp_{r_{j'}(t')}(v))\}.
\end{equation}
Since  $\mathcal{U}_1$ is Ramsey for $\mathcal{R}_1$,
there is a $Y\le X$  in $\mathcal{C}$ for which either $\mathcal{AR}_{j+1}|Y\sse\mathcal{H}$ or else $\mathcal{AR}_{j+1}|Y \cap \mathcal{H}=\emptyset$.
The second case cannot happen, since for any $Y\le X$, there are $u\in\mathcal{R}_1(j)|Y$ and $v\in\mathcal{R}_1(j')|Y$ for which $\vp_{r_j(t)}(u)=\vp_{r_{j'}(t')}(v)$.
Thus,
$\{\vp_{r_j(t)}(u):u\in\mathcal{R}_1(j)|Y/t\}\sse S'$.
Therefore, $\mathcal{W}_s$ is a nonprincipal filter.
Moreover,
for any $t\in\mathcal{F}$ and $j<|t|$ with $j$ maximal such that $\vp(r_j(t))=s$,
the collection  of sets
$\{\vp_{r_j(t)}(u):u\in\mathcal{R}_1(j)|X/t\}$, $X\in\mathcal{C}$ generates
$\mathcal{W}_s$.
Fix one such $r_j(t)$.

Toward showing that $\mathcal{W}_s$ is an ultrafilter,
let $W\sse\mathcal{S}$.
Let
$\mathcal{H}=\{a\in\mathcal{AR}_{j+1}:\vp_{r_j(t)}(a(j))\in W\}$.
Since $\mathcal{U}_1$ is Ramsey for $\mathcal{R}_1$,
there is a $Y\in\mathcal{C}$ such that either
$\mathcal{AR}_{j+1}|Y\sse\mathcal{H}$ or else $\mathcal{AR}_{j+1}|Y\cap\mathcal{H}=\emptyset$.
In the first case, $\{\vp_{t\re j}(Z(j)):Z\le Y\}\sse W$.
In the second case,
$\{\vp_{r_j(t)}(Z(j)):Z\le Y\}\cap W=\emptyset$.
Since
$\{\vp_{r_j(t)}(u):u\in\mathcal{R}_1(j)|Y\}=\{\vp_{r_j(t)}(Z(j)):Z\le Y\}$,
$\mathcal{W}_s$ is an ultrafilter.
\end{proof}

\begin{claim}\label{claim.thmF,T3}
Let $s\in\hat{\mathcal{S}}\setminus\mathcal{S}$.
Then $\mathcal{W}_{s}$ is isomorphic to $\mathcal{Y}_n$ for some $n<\om$.
\end{claim}

\begin{proof}
Fix $t\in\mathcal{F}$ and $j<|t|$  with $j$ maximal such that $\vp(r_j(t))=s$.
Suppose $T_{r_j(t)}=T_{\lgl 0\rgl}$.
Then for each $X\in\mathcal{C}$,
$\{\vp_{r_j(t)}(u):u\in\mathcal{R}_1(j)|X\}= \pi_{T_{\lgl 0\rgl}}(\mathcal{R}_1(j)|X)\in\mathcal{Y}_0$.
Since $\mathcal{W}_s$ is a nonprincipal ultrafilter,  $\mathcal{W}_s$ must equal  $\mathcal{Y}_0$, by Fact \ref{fact.ufequal}.
If $T_{r_j(t)}=T_I$ and $n=|I|\ge 1$,
then for each $X\in\mathcal{C}$,
$\{\vp_{r_j(t)}(u):u\in\mathcal{R}_1(j)|X\}\sse \mathcal{R}_1(n)|X\in\mathcal{Y}_n$.
Thus, by Fact \ref{fact.ufequal}, $\mathcal{W}_s$ must equal $\mathcal{Y}_n$.
\end{proof}

\begin{claim}\label{claim.W=Wtrees}
$\mathcal{W}$ is  the  ultrafilter of  $\vec{\mathcal{W}}$-trees, where  $\vec{\mathcal{W}}=(\mathcal{W}_s:s\in\hat{\mathcal{S}}\setminus\mathcal{S})$.
\end{claim}

\begin{proof}
Given a tree $\hat{S}\sse\hat{\mathcal{S}}$, let $[\hat{S}]$ denote the set of cofinal branches through $\hat{S}$.
Let 
\begin{equation}
[{\vec{\mathcal{W}}}]
=\{[\hat{S}]:\hat{S}\sse\hat{\mathcal{S}}\mathrm{\  is\ a\ }\vec{\mathcal{W}}\mathrm{-tree}\}.
\end{equation}
We shall show that $\mathcal{W}=[{\vec{\mathcal{W}}}]$.

Let  $X\in\mathcal{C}$, 
$S=\{\vp(t):t\in\mathcal{F}|X\}$, and
 $\hat{S}$ denote the collection of all initial segments of elements of $S$.
Then $S=[\hat{S}]$.
$\hat{S}$ is a $\vec{\mathcal{W}}$-tree:
For each $s\in \hat{S}\setminus\mathcal{S}$,
the set of immediate extensions of $s$ in $\hat{S}$ is
the set of all $\vp_{r_j(t)}(t(j))$ such that $t\in\mathcal{F}|X$, $s\sqsubset \vp(t)$, and $j<|t|$ is maximal such that $\vp(r_j(t))=s$.
This set is an element of $\mathcal{W}_s$.
Further, the set of $\vec{\mathcal{W}}$-trees forms a filter on $\hat{\mathcal{S}}$.
Hence, $[\vec{\mathcal{W}}]$ 
is a nonprincipal filter which contains a cofinal subset of $\mathcal{W}$; thus they are equal.
\end{proof}

Thus, by Claims \ref{claim.W=V} - \ref{claim.W=Wtrees},
$\mathcal{V}$ is isomorphic to the ultrafilter $\mathcal{W}$ on base set $\mathcal{S}$ generated by the $\vec{\mathcal{W}}$-trees, where for each $s\in\hat{\mathcal{S}}\setminus\mathcal{S}$,
$\mathcal{W}_s$ is exactly $\mathcal{Y}_n$ for some $n<\om$.
\end{proof}

\begin{rem}\label{rem.uniform}
Like every topological Ramsey space, there is the usual notion of a uniform front on $\mathcal{R}_1$.
It is routine to show, by induction on rank, that for each $X\in\mathcal{R}_1$ and each front $\mathcal{F}$ on $[\emptyset,X]$,
there is a $Y\le X$ such that $\mathcal{F}|Y$ is uniform.
Thus, Theorem \ref{thm.TukeyU_1} in fact yields that every ultrafilter $\mathcal{V}\le_T\mathcal{U}_1$ is isomorphic to some countable iteration of Fubini products of ultrafilters from among $\mathcal{Y}_n$, $n<\om$.
\end{rem}

\begin{example}[Rudin-Keisler Structure within the Tukey Type of $\mathcal{U}_1$]\label{ex.structures}
The Tukey type of $\mathcal{U}_1$ contains all isomorphism types of countable iterations of Fubini products of $\mathcal{U}_1$.
It follows that the Tukey type of $\mathcal{U}_1$ contains a Rudin-Keisler strictly increasing chain of order type $\om_1$.
It also contains the following Rudin-Keisler strictly increasing chain of rapid p-points of order type $\om$:
$\mathcal{U}_1<_{RK}\mathcal{Y}_2<_{RK}<\mathcal{Y}_3<_{RK}\dots$.
Since each of $\mathcal{U}_1$ and the $\mathcal{Y}_n$, $n\ge 2$, is a p-point, hence none of the ultrafilters in this chain is a Fubini product of any other ultrafilters.
Moreover, it follows from Theorem \ref{thm.TukeyU_1} that this chain is maximal within the Tukey type of $\mathcal{U}_1$.
Perhaps more surprising is that the Tukey type of $\mathcal{U}_1$ contains ultrafilters which are Rudin-Keisler incomparable.
For example,
it follows by arguments using the Abstract Ellentuck Theorem that $\mathcal{U}_1\cdot\mathcal{U}_1$ and $\mathcal{Y}_2$ are Rudin-Keisler incomparable.
\end{example}

From Theorem \ref{thm.TukeyU_1}, we obtain the analogue of Laflamme's result for the Rudin-Keisler ordering now in the context of Tukey types.

\begin{thm}\label{cor.1Tpred}
If $\mathcal{V}\le_T\mathcal{U}_1$,
then one of the following  must hold:
\begin{enumerate}
\item
$\mathcal{V}\equiv_T\mathcal{U}_1$;
or
\item
$\mathcal{V}\equiv_T\mathcal{U}_0$;
or
\item
$\mathcal{V}$ is a principal ultrafilter.
\end{enumerate}
\end{thm}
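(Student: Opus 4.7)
The plan is to apply Theorem \ref{thm.TukeyU_1} to obtain a structural description of every nonprincipal $\mathcal{V}\le_T\mathcal{U}_1$, and then to split on whether the coordinate ultrafilters in that description include any $\mathcal{Y}_n$ with $n\ge 1$. If $\mathcal{V}$ is principal, conclusion (3) holds trivially, so assume henceforth that $\mathcal{V}$ is nonprincipal. Theorem \ref{thm.TukeyU_1} represents $\mathcal{V}$ up to isomorphism as the ultrafilter of $\vec{\mathcal{W}}$-trees on a well-founded tree $\hat{\mathcal{S}}\setminus\mathcal{S}$, where each coordinate ultrafilter $\mathcal{W}_s$ belongs to $\{\mathcal{Y}_n:n<\omega\}$.

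In the first case, some $\mathcal{W}_s$ equals $\mathcal{Y}_n$ for some $n\ge 1$. Here I would exhibit a Rudin-Keisler projection witnessing $\mathcal{V}\ge_{RK}\mathcal{W}_s$ by sending each $\vec{\mathcal{W}}$-tree to its set of immediate successors at the node $s$ (which lies in $\mathcal{W}_s$ by the definition of a $\vec{\mathcal{W}}$-tree). This yields $\mathcal{V}\ge_T\mathcal{W}_s=\mathcal{Y}_n$, and by Proposition \ref{prop.structureU_R(n)}(5) we have $\mathcal{Y}_n\equiv_T\mathcal{U}_1$. Combined with the hypothesis $\mathcal{V}\le_T\mathcal{U}_1$, this gives $\mathcal{V}\equiv_T\mathcal{U}_1$, and conclusion (1) follows.

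In the remaining case every $\mathcal{W}_s$ equals $\mathcal{Y}_0$, which by Fact \ref{fact.iso} is isomorphic to the Ramsey ultrafilter $\mathcal{U}_0$. Then $\mathcal{V}$ is isomorphic to a countable iterated Fubini product of $\mathcal{U}_0$ over a well-founded tree. On one hand, an easy transfinite induction on the rank of $\hat{\mathcal{S}}$, using the fact recalled just before Theorem \ref{thm.tod} (Corollary 37 of \cite{Dobrinen/Todorcevic10}) that $\mathcal{U}_0\cdot\mathcal{U}_0\equiv_T\mathcal{U}_0$ for the rapid p-point $\mathcal{U}_0$, shows that $\mathcal{V}\le_T\mathcal{U}_0$. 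On the other hand, projecting each $\vec{\mathcal{W}}$-tree to its values at any fixed branch through $\hat{\mathcal{S}}$ exhibits $\mathcal{U}_0\le_{RK}\mathcal{V}$, and hence $\mathcal{U}_0\le_T\mathcal{V}$. Thus $\mathcal{V}\equiv_T\mathcal{U}_0$, giving conclusion (2). The main obstacle is formalizing the Rudin-Keisler projections in both cases and the transfinite induction preserving $\le_T\mathcal{U}_0$ under Fubini integration; both are routine once the $\vec{\mathcal{W}}$-tree structure from Theorem \ref{thm.TukeyU_1} is unpacked, so the serious content of the theorem is contained in the classification of Theorem \ref{thm.TukeyU_1} itself.
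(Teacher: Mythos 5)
Your overall strategy matches the paper's: apply Theorem \ref{thm.TukeyU_1} to realize $\mathcal{V}$ as the ultrafilter of $\vec{\mathcal{W}}$-trees, and split on whether some $\mathcal{W}_s$ equals $\mathcal{Y}_n$ with $n\ge 1$. Your handling of the "all $\mathcal{W}_s=\mathcal{Y}_0$" case is essentially right, though the phrase ``projecting to its values at any fixed branch'' is garbled; the intended Rudin-Keisler reduction to $\mathcal{U}_0$ is simply projection of each $a\in\mathcal{S}$ to its immediate successor of the root, which lands in $\mathcal{W}_{\text{root}}\cong\mathcal{Y}_0$.

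The problem is in the first case. Your proposed Tukey reduction $\mathcal{V}\ge_T\mathcal{W}_s$ --- mapping each $\vec{\mathcal{W}}$-tree to its set of immediate successors at the node $s$ --- is only defined on those $\vec{\mathcal{W}}$-trees that actually contain $s$, and when $s$ is an internal node other than the root, this subfamily is \emph{not} cofinal in $\mathcal{V}$. Indeed, the predecessor of $s$ in $\hat{\mathcal{S}}$ carries a nonprincipal ultrafilter, so there is a $\vec{\mathcal{W}}$-tree that omits $s$ altogether; every $\vec{\mathcal{W}}$-tree contained in it also omits $s$, and trees omitting $s$ cannot be refined to ones containing $s$. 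Hence the map cannot be extended to a cofinal map $\mathcal{V}\ge_T\mathcal{W}_s$, and the reduction fails as stated. (If construed instead as a literal Rudin-Keisler projection on the base set $\mathcal{S}$, it fails for the same reason: $\{a\in\mathcal{S}:a\sqsupset s\}$ need not lie in $\mathcal{V}$.) The paper avoids this by arguing via induction on the lexicographical rank of the front $\mathcal{F}$, so that at each inductive step the witnessing node $s$ is the root of the subtree under consideration --- the one node for which your projection genuinely is cofinal. Combined with the uniformity of the canonical $\vp$ obtained by the Ramsey-theoretic thinning, that induction shows $\mathcal{V}\ge_T\mathcal{U}_1$, which together with the hypothesis $\mathcal{V}\le_T\mathcal{U}_1$ gives conclusion (1). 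You need to replace the single-node projection with this rank induction (or some argument equivalent to it) for case 1 to go through.
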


\begin{proof}
Let $\mathcal{V}$ be a  nonprincipal ultrafilter such that  $\mathcal{V}\le_T\mathcal{U}_1$.
Theorem \ref{thm.TukeyU_1} implies that $\mathcal{V}$ is isomorphic, and hence Tukey equivalent, to the ultrafilter on $\mathcal{S}$ generated by the  $\vec{\mathcal{W}}$-trees, where
 for each $s\in\hat{\mathcal{S}}\setminus\mathcal{S}$, the ultrafilter $\mathcal{W}_s$ is $\mathcal{Y}_{n(s)}$ for some $n(s)<\om$.
If all $n(s)=0$, then $\mathcal{V}$ is Tukey equivalent to $\mathcal{U}_0$.
Otherwise, for some $s$, $n(s)>0$.
In this case, Proposition \ref{prop.structureU_R(n)} and induction on the lexicographical rank of $\mathcal{F}$ imply that $\mathcal{V}$ is Tukey equivalent to $\mathcal{U}_1$.
\end{proof}

\bibliographystyle{amsplain}
\bibliography{referencesR_1}

\end{document}